\documentclass[11pt]{amsart}
\setlength{\textwidth}{400pt}
\setlength{\textheight}{615pt}

\usepackage{amsmath, amsfonts, amsthm, amssymb}
\usepackage{graphicx}
\usepackage{float}
\usepackage{verbatim}
\usepackage{color}

\allowdisplaybreaks


\usepackage{multicol, mathtools, dsfont, verbatim,hyperref}
\usepackage{hyperref}
\usepackage{scalerel,stackengine, subcaption}
\usepackage[usenames,dvipsnames]{xcolor}
\usepackage{enumitem}
\usepackage{pgfplots}
\usepgfplotslibrary{fillbetween}
\pgfplotsset{width=10cm,compat=1.9}
\definecolor{lightblue}{HTML}{2B77A4}
\definecolor{darkred}{HTML}{9E0D0D}
\hypersetup{
	colorlinks=true,
	linkcolor=darkred,
	urlcolor=darkred,
	citecolor=lightblue
}
\urlstyle{same}

\newcommand{\blue}{\color{blue}}

\usepackage[square,sort,comma,numbers]{natbib}
\setlength{\bibsep}{0.7pt}

\setcounter{tocdepth}{2}

\hoffset=-0.8cm\voffset=-0.1cm

\usepackage{fancyhdr}
 

\setcounter{secnumdepth}{3}
\numberwithin{equation}{section}

\sloppy

\newtheorem{thm}{Theorem}[section]
\newtheorem{lem}[thm]{Lemma}
\newtheorem{cor}[thm]{Corollary}
\newtheorem{prop}[thm]{Proposition}

\theoremstyle{definition}
\newtheorem{defn}[thm]{Definition}

\newtheorem{example}[thm]{Example}
\newtheorem{rem}[thm]{Remark}
\newtheorem{ques}[thm]{Question}

\newcommand{\eps}{\epsilon}

\newcommand{\R}{\mathbb{R}}
\newcommand{\N}{\mathbb{N}}
\newcommand{\Z}{\mathbb{Z}}
\newcommand{\C}{\mathbb{C}}

\DeclareMathOperator{\diam}{diam}
\DeclareMathOperator{\dist}{dist}

\DeclareMathOperator{\Vol}{Vol}
\DeclareMathOperator{\Id}{Id}

\renewcommand{\ge}{\geqslant}
\renewcommand{\le}{\leqslant}
\renewcommand{\geq}{\geqslant}
\renewcommand{\leq}{\leqslant}

\newcommand{\ubd}{\overline{\dim}_{\textup{B}}}

\newcommand{\cA}{{\mathcal A}}

\newcommand{\cC}{\mathcal{C}}
\newcommand{\cD}{\mathcal{D}}
\newcommand{\cE}{\mathcal{E}}
\newcommand{\cF}{\mathcal{F}}
\newcommand{\cH}{\mathcal{H}}
\newcommand{\cM}{\mathcal{M}}
\newcommand{\cU}{\mathcal{U}}

\newcommand{\bone}{\mathbb{I}}
\newcommand{\be}{\mathbf{e}}

\newcommand{\cI}{{\mathcal{I}}}
\newcommand{\bE}{\mathbb{E}}

\newcommand{\loc}{{\scriptstyle{loc}}}
\newcommand{\reg}{{\scriptstyle{reg}}}
\newcommand{\Sob}{{\scriptstyle{Sob}}}
\newcommand{\RH}{{\scriptstyle{RH}}}
\newcommand{\submax}{{\scriptstyle{max}}}

\title[Distortion of intermediate dimension and conformal dimension]{Sobolev and quasiconformal distortion of intermediate dimension with applications to conformal dimension}
\author{Jonathan M. Fraser}
\address{University of St Andrews \\ Mathematical Institute \\ St Andrews KY16 9SS, Scotland \\ \texttt{jmf32@st-andrews.ac.uk}}
\author{Jeremy T. Tyson}
\address{University of Illinois at Urbana-Champaign \\ Department of Mathematics \\ 1409 West Green Street \\ Urbana, IL 61801 \\ \texttt{tyson@illinois.edu}}
\thanks{JMF was financially supported by a {\em Leverhulme Trust Research Project Grant} (RPG-2023-281),  an {\em EPSRC
Standard Grant}  (EP/Y029550/1),  and an \emph{EPSRC Open Fellowship} (EP/Z533440/1). JTT acknowledges support from the {\em Simons Foundation} (grant \#852888). Research for this paper was conducted whilst JTT was a visitor in the School of Mathematics and Statistics of the University of St Andrews in Fall 2024. He wishes to thank the institute for its hospitality during this visit.}

\date{\today}
\keywords{Intermediate dimension, Hausdorff dimension, box-counting dimension, Assouad dimension, Assouad spectrum, Sobolev mapping, quasiconformal mapping, conformal dimension, Bedford--McMullen carpet, fractal percolation}

\begin{document}

\maketitle
\thispagestyle{empty}

\begin{center}\emph{Dedicated to Bob Kaufman, 1942--2025.}\end{center}

\begin{abstract}
We study the distortion of intermediate dimension under supercritical Sobolev mappings and also under quasiconformal or quasisymmetric homeomorphisms. In particular, we extend to the setting of intermediate dimensions both the Gehring--V\"ais\"al\"a theorem on dilatation-dependent quasiconformal distortion of dimension and Kovalev's theorem on the nonexistence of metric spaces with conformal dimension strictly between zero and one. Applications include new contributions to the quasiconformal classification of Euclidean sets and a new sufficient condition for the vanishing of conformal box-counting dimension. We illustrate our conclusions with specific consequences for Bedford--McMullen carpets, samples of Mandelbrot percolation, and product sets containing a polynomially convergent sequence factor.
\end{abstract}

\tableofcontents


\section{Introduction and  main results}\label{sec:introduction}

Among numerous applications, metrically defined notions of dimension are commonly used in the service of mapping theory problems. Invariance or quasi-invariance of such dimensions enables one to distinguish spaces up to various types of mapping equivalence. For example, many standard definitions of dimension in fractal geometry are bi-Lipschitz invariant, and hence can be used to show that certain pairs of spaces are not bi-Lipschitz equivalent. Similarly, effective bounds for the distortion of Hausdorff dimension under quasiconformal mappings have been used to distinguish spaces up to quasiconformal equivalence, whilst the related concept of conformal dimension has been shown to be relevant for a wide variety of problems in quasisymmetric uniformisation, geometric group theory, and dynamics.

A pair of one-parameter families of dimension functions feature prominently in this paper: the {\it intermediate dimensions}, a one-parameter family of dimensions interpolating between Hausdorff and (upper) box-counting dimension, and the {\it Assouad spectrum}, a one-parameter family interpolating between box-counting dimension and Assouad dimension. Although our primary emphasis is on the intermediate dimensions, and especially on their distortion properties under Sobolev and quasiconformal mappings, some conclusions which we obtain are phrased in terms of the Assouad spectrum. We give sharp bounds on the increase of intermediate dimension under supercritical Sobolev mappings of Euclidean domains, and in particular we give two-sided dilatation-dependent bounds for the change of intermediate dimension under quasiconformal mappings. These results extend prior work of Gehring--V\"ais\"al\"a and Kaufman, who treated the Hausdorff and box-counting cases. We also generalise Kovalev's theorem on the nonexistence of metric spaces with conformal dimension strictly between zero and one. These results have concrete applications to the quasiconformal classification of Euclidean sets and also to the theory of conformal dimension. In particular, we provide a new sufficient condition, phrased in terms of the intermediate dimensions of a metric space $(X,d)$, which guarantees that $X$ has vanishing conformal Assouad spectrum.

\subsection{Notions of metric dimension}

We now introduce some of the major players in our story. Let $(X,d)$ be a totally bounded metric space. For $r>0$, the covering number $N(X,r)$ denotes the minimal number of sets of diameter $r$ needed to cover $X$. The {\it (upper) box-counting dimension} of $(X,d)$ is then defined to be
\begin{equation}\label{eq:upper-box-dimension}
\overline\dim_B X := \limsup_{r\to 0} \frac{\log N(X,r)}{-\log r}.
\end{equation}
Equivalently, $\overline\dim_B X$ is the infimum of those values $s>0$ so that the growth rate of $N(X,r)$ as $r \to 0$ is $O(r^{-s})$. Denoting by $\dim_A X$ the Assouad dimension of $X$ (see section \ref{subsec:metric} for the definition), we recall that
\begin{equation}\label{eq:HBA}
\dim_H X \le \overline\dim_B X \le \dim_A X
\end{equation}
for all totally bounded metric spaces $(X,d)$. 

For $0<\theta \le 1$, the {\em (upper) $\theta$-intermediate dimension $\overline\dim_\theta X$} is defined as the critical value $s>0$ associated to the limsup as  $\max_i \diam A_i \to 0$ of the infimal value of the usual diameter sums $\sum_i (\diam A_i)^s$ for coverings $\{A_i:i \in \N\}$ of $X$, but with the added restriction that only coverings $\{A_i\}$ for which 
\begin{equation}\label{eq:int-dim-control}
\delta^{1/\theta} \le \diam(A_i) \le \delta \quad \forall \, i \in \N,
\end{equation}
for some $\delta>0$ are considered. See section \ref{sec:intermediate} for the precise definition and further comments. When $\theta = 1$, the restriction is to coverings by sets with constant diameter and the diameter sum described above reduces to $N(X,\delta)\delta^s$; in this case, the procedure so described recovers the box-counting dimension $\overline\dim_B X$. 

In this paper, we will consistently employ upper intermediate and box-counting dimensions, so henceforth we omit the use of the adjective `upper' in the terminology. However, we continue to denote these notions of dimension by $\overline\dim_\theta$ and $\overline\dim_B$.

Similarly, for $0\le \theta <1$, the {\em Assouad  spectrum}\footnote{In fact the quantity we define here is a variant of the notion of Assouad spectrum defined in \cite[Section 3.3]{fr:book}. It was termed {\it upper Assouad spectrum} in \cite{fr:book} and {\it regularized Assouad spectrum} in \cite{ct:qc-assouad-spectrum}. We suppress these descriptors  here, simply referring to the Assouad spectrum.} $\dim_A^\theta X$ is defined as the infimum of those values $s>0$ so that the covering numbers $N(B(x,R),r)$ have growth rate bounded above by $O((R/r)^s)$, uniformly for $x \in X$ and $R>0$, under the restriction that $r^\theta \le R$. Again, when $\theta=0$ the ancillary restriction on $r$ disappears and we recover the box-counting dimension $\dim_B X$ upon fixing $R = \diam(X)$. In the limit as $\theta \nearrow 1$ one arrives in principle to the standard notion of Assouad dimension $\dim_A X$. However, in general $\lim_{\theta \to 1} \dim_\theta X$ may not equal $\dim_A X$ and one defines the {\em quasi-Assouad dimension}
\begin{equation}\label{eq:qa}
\dim_{qA} X := \lim_{\theta \nearrow 1} \dim_A^\theta X.
\end{equation}
Similarly, in the limit as $\theta \searrow 0$ the lower bound in \eqref{eq:int-dim-control} disappears and, philosophically, one arrives to the Hausdorff dimension $\dim_H X$. However, in general $\lim_{\theta \to 0} \dim_\theta X$ may not equal $\dim_H X$. By analogy with \eqref{eq:qa} we define the {\em (upper) quasi-Hausdorff dimension}\footnote{We thank Alex Rutar for suggesting this terminology.} of a metric space to be
\begin{equation}\label{eq:qHdim}
\dim_{qH} X := \lim_{\theta \searrow 0} \dim_\theta X.
\end{equation}
 For all choices of $0<\theta<1$ we have
\begin{equation}\label{eq:HthetaBthetaA}
\dim_H X \le \overline\dim_\theta X \le \overline\dim_B X \le \dim_A^\theta X \le \dim_A X,
\end{equation}
moreover,
$$
\dim_H X \le \dim_{qH} X \stackrel{\theta \to 0}{\longleftarrow} \overline\dim_\theta X \stackrel{\theta \to 1}{\longrightarrow} \overline\dim_B X
$$
and
$$
\overline\dim_B X\stackrel{\theta \to 0}{\longleftarrow} \dim_{A}^\theta X \stackrel{\theta \to 1}{\longrightarrow} \dim_{qA} X \le \dim_A X.
$$
Understanding when
\begin{equation}\label{eq:HqH}
\dim_H X = \dim_{qH} X
\end{equation}
for a given metric space $(X,d)$ is a subtle question which has already been shown to be useful for several questions in fractal geometry, dynamics, and stochastics. One may reasonably define $\overline\dim_0 X = \dim_H X$ and then \eqref{eq:HqH} is equivalent to asking if the intermediate dimensions are continuous at $\theta=0$.  Examples of sets for which \eqref{eq:HqH} is satisfied include self-affine carpets of Bedford--McMullen type and polynomially convergent sequences as in \eqref{eq:Ep}, see \cite{ffk:intermediate}. See \cite{bff:intermediate-projections} for consequences of \eqref{eq:HqH} for the box-counting dimensions of orthogonal projections, \cite{bur:fractional-Brownian} for applications to the dimensions of images under fractional Brownian motion, and \cite{am:preprint} for further results on the behaviour of quasi-Hausdorff dimension under projections. In this paper, we show that \eqref{eq:HqH} is also relevant in the study of conformal dimension.

\subsection{Conformal dimension}

The conformal dimension of a metric space $(X,d)$ is the infimum of dimensions of metric spaces quasisymmetrically equivalent to $X$. We recall that metric spaces $(X,d)$ and $(Y,d')$ are {\it quasisymmetrically equivalent} if there exists a homeomorphism $f:X \to Y$ and an increasing homeomorphism $\eta:[0,\infty) \to [0,\infty)$ so that
$$
\frac{d'(f(x),f(a))}{d'(f(x),f(b))} \le \eta \left( \frac{d(x,a)}{d(x,b)} \right) \qquad \forall \, x,a,b \in X, x \ne b.
$$
Such a map $f$ is called a \emph{quasisymmetric homeomorphism} and we write $X\stackrel{qs}{\simeq} Y$ if there exists a quasisymmetric homeomorphism from $X$ to $Y$. Note that the inverse of a quasisymmetric homeomorphism is again quasisymmetric and so $\stackrel{qs}{\simeq}$ is an equivalence relation on the class of metric spaces.

A version of conformal dimension can be defined for any naturally occurring metric dimension. The original concept, due to Pansu \cite{P}, was {\em conformal Hausdorff dimension}
\begin{equation}\label{eq:c-dim-h}
C\dim_H X := \inf \{ \dim_H Y : X \stackrel{qs}{\simeq} Y \},
\end{equation}
but {\em conformal Assouad dimension}
\begin{equation}\label{eq:c-dim-a}
C\dim_A X := \inf \{ \dim_A Y : X \stackrel{qs}{\simeq} Y \}
\end{equation}
has also received significant attention in the literature. For uniformly perfect spaces, conformal Assouad dimension $C\dim_A X$ agrees with the {\em Ahlfors regular conformal dimension}, and under the latter name features extensively in applications to uniformisation, dynamics on non-smooth spaces, and analysis on fractals. Various authors have studied the question of when a given metric space is minimal for conformal dimension, and when conformal dimension is attained (i.e., when the infimum in \eqref{eq:c-dim-h} or \eqref{eq:c-dim-a} is a minimum). For instance, Bonk and Kleiner \cite{bonk-kleiner} showed that attainment of $C\dim_A X$, in the case where $X$ is a topological $2$-sphere arising as the boundary at infinity of a Gromov hyperbolic group $G$, suffices to ensure that $G$ is a Kleinian group. Keith and Laakso \cite{keith-laakso} gave necessary and sufficient conditions for an Ahlfors regular metric space to attain its conformal Assouad dimension. More recently, Rossi and Suomala \cite{rossi-suomala} have shown that samples of fractal percolation are almost surely not minimal for conformal Hausdorff dimension.  Other references include \cite{bonk:icm}, \cite{mackay:random-groups}, \cite{bourdon-pajot:l-p-cohomology}, \cite{bourdon-kleiner:clp}, and \cite{mur:combinatorial}.

Interposing between the preceding notions is {\it conformal box-counting dimension}
\begin{equation}\label{eq:c-dim-b}
C\overline\dim_B X :=  \inf \{ \overline\dim_B Y : X \stackrel{qs}{\simeq} Y \}.
\end{equation}
Similarly, we may define the {\it conformal $\theta$-intermediate dimension}
\begin{equation}\label{eq:c-dim-intermediate-theta}
C\overline\dim_\theta X :=  \inf \{ \overline\dim_\theta Y : X \stackrel{qs}{\simeq} Y \}.
\end{equation}
and the {\it conformal Assouad spectrum}
\begin{equation}\label{eq:c-dim-assouad-spectrum-theta}
C\dim_A^\theta X :=  \inf \{ \dim_A^\theta Y : X \stackrel{qs}{\simeq} Y \}.
\end{equation}
In view of \eqref{eq:HthetaBthetaA} we have
$$
C\dim_H X \le C\overline\dim_\theta X \le C\overline\dim_B X \le C\dim_A^\theta X \le C\dim_A X
$$
for all $0<\theta<1$. In Theorem \ref{th:main-1} we prove a distortion estimate for conformal intermediate dimension $C\overline\dim_\theta X$, which we show in Theorem \ref{th:applic-1} to have applications to the study of conformal Assouad spectrum $C\dim_A^\theta X$. 

Eriksson-Bique \cite{EB} has shown that 
$$
C\dim_H X = C\dim_A X
$$
for all quasi-self-similar metric spaces $(X,d)$.  We note, however, that this is not true for all self-similar sets, provided one allows for overlaps in the construction \cite{fr:book}.   The primary examples which we focus on in this paper are not quasi-self-similar. Relevant examples include self-affine carpets and sequences and sets defined by processes converging at a polynomial (as opposed to an exponential) rate. For an interesting conclusion relating polynomial convergence and box-counting dimension, we refer the reader to \cite[Theorem 1.2]{chr:spheres}.

Kovalev \cite{Kov} showed the nonexistence of spaces with conformal dimension strictly between zero or one. Specifically, if $(X,d)$ is a metric space with $\dim_H X < 1$ (resp.\  $\overline\dim_B X < 1$), then $C\dim_H X = 0$ (resp.\ $C\overline\dim_B X = 0$). Our first result extends Kovalev's theorem to cover all instances of the intermediate dimensions.

\begin{thm}\label{th:main-1}
Let $(X,d)$ be a totally bounded metric space and let $0<\theta<1$. If $\overline\dim_\theta X < 1$, then $C\overline\dim_\theta X = 0$.
\end{thm}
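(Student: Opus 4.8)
The plan is to prove the (equivalent) assertion that for every $\varepsilon>0$ there is a metric space $Y$ with $X\stackrel{qs}{\simeq}Y$ and $\overline\dim_\theta Y<\varepsilon$; letting $\varepsilon\to 0$ and using the quasisymmetric invariance of conformal dimension then yields $C\overline\dim_\theta X=0$. Two reductions frame the problem. First, $\overline\dim_\theta X<1$ already forces $X$ to be totally disconnected, since a nondegenerate connected metric space has Hausdorff dimension at least one and $\dim_H X\le\overline\dim_\theta X$ by \eqref{eq:HthetaBthetaA}. Second, if $\overline\dim_B X<1$ as well, then Kovalev's theorem \cite{Kov} gives $C\overline\dim_B X=0$, whence $C\overline\dim_\theta X\le C\overline\dim_B X=0$ by the chain of inequalities for conformal dimensions displayed just before the statement. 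So the real content lies in the regime $\overline\dim_\theta X<1\le\overline\dim_B X$: here $X$ may be ``space-filling'' at individual scales but is covered efficiently over every \emph{window} $[\delta^{1/\theta},\delta]$ of scales, and neither Kovalev's box-counting theorem nor the (immediate) conclusion $C\dim_H X=0$ settles the matter.

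Now fix $s$ with $\overline\dim_\theta X<s<1$. By definition of $\overline\dim_\theta$, for all sufficiently small $\delta$ there is a cover $\{A_i\}$ of $X$ with $\delta^{1/\theta}\le\diam A_i\le\delta$ and $\sum_i(\diam A_i)^s\le 1$; in particular it has at most $\delta^{-s/\theta}$ members. Choosing such covers along a sequence of scales $1=\delta_0>\delta_1>\cdots\to 0$ whose successive ratios are calibrated below, and refining them against one another, one organises $X$ into a nested sequence of finite families of cells, with each $x\in X$ lying in one cell per generation. From this combinatorial data one builds the target $Y$ by a Moran-type construction: a generation-$k$ cell is to have diameter $\rho_k$ in $Y$, where $1=\rho_0>\rho_1>\cdots\to 0$ is chosen to decrease \emph{fast} enough that the generation-$k$ count $\le\delta_k^{-s/\theta}$ is at most $\rho_k^{-\varepsilon}$, but \emph{slowly} enough --- with consecutive ratios bounded below in terms of the $d$-geometry of the corresponding window, and in a way that accommodates the overlaps or near-connections that $X$ itself may exhibit --- that the assignment defines a genuine metric and the identity map $X\to Y$ is a homeomorphism. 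Covering $Y$ by the cells of generation $k$ then gives, at the scales $\rho_k\to 0$, covers whose $\varepsilon'$-diameter sums are $\le\rho_k^{\,\varepsilon'-\varepsilon}\to 0$ for any $\varepsilon'>\varepsilon$ (and likewise at intermediate scales, provided consecutive generations are kept within one window of each other), so $\overline\dim_\theta Y\le\varepsilon$.

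The crux --- and the step I expect to demand the most care --- is to check that this identity map $X\to Y$ is quasisymmetric, i.e.\ that the drastic compression encoded in the $\rho_k$ distorts ratios of distances by no more than a single control function $\eta$. This is precisely where the hypothesis is used: each window $[\delta_k^{1/\theta},\delta_k]$ spans of order $\tfrac{1-\theta}{\theta}\log(1/\delta_k)$ dyadic scales --- a genuine range, since $0<\theta<1$ --- over which $X$ is covered with diameter sum at most $1$ for the exponent $s<1$, so that at those scales the covering is far from space-filling and leaves enough slack to realise the compression within the polynomial distortion quasisymmetric maps permit. The quantitative heart of the argument is to bound the distortion accumulated over the infinitely many windows by one $\eta$, and to confirm that the resulting $Y$ is a metric space at all; this pins down the admissible decay rate of $(\rho_k)$ --- balancing ``fast enough to drive $\overline\dim_\theta Y$ below $\varepsilon$'' against ``slow enough to remain quasisymmetric'' --- and adapts Kovalev's construction, whose two special cases $\theta\to 0$ and $\theta\to 1$ (the window degenerating to a single scale) are the Hausdorff and box-counting statements of \cite{Kov}.
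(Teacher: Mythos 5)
Your proposal correctly identifies the overall strategy (exhibit quasisymmetric images of $X$ of arbitrarily small $\theta$-intermediate dimension) and some of the right preliminary reductions, but it stops at the threshold of the real work. You name the ``crux'' as verifying that the compression map $X\to Y$ is quasisymmetric and that $Y$ is a metric space at all, and then you do not carry that verification out; you only gesture at ``adapting Kovalev's construction'' and at the slack afforded by the window $[\delta^{1/\theta},\delta]$. That is precisely the content which makes the theorem nontrivial in the regime $\overline\dim_\theta X<1\le\overline\dim_B X$, and your Moran-type cell scheme has several unresolved obstacles as sketched: the covers $\{A_i\}$ witnessing $\overline\dim_\theta X<s$ at different $\delta$ need not be disjoint or nest across scales, so ``refining them against one another'' does not automatically give a coherent generation structure on $X$; and assigning a target diameter $\rho_k$ to each cell does not by itself define a metric on $Y$ in which the identity map $X\to Y$ is even a homeomorphism, let alone quasisymmetric, without a quantitative argument controlling pairs of points that straddle cell boundaries across many generations. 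The claim in your last sentence that the $\theta\to 0$ limiting case of such a statement is Kovalev's Hausdorff-dimension theorem is also not quite right: $\lim_{\theta\to 0}\overline\dim_\theta X$ is the quasi-Hausdorff dimension $\dim_{qH}X$, which can exceed $\dim_H X$.

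The paper avoids all of this by not re-deriving Kovalev's quasisymmetric deformation. It embeds $X$ isometrically into a separable Banach space $V$ (via the Banach--Fr\'echet--Mazur theorem) and invokes Kovalev's technical lemma (Proposition~\ref{prop:Kov}, i.e.\ Corollary~4.2 of \cite{Kov}) as a black box: that lemma already produces, from a doubly indexed family of covers by balls with $\sum_j(1+\|x_{ij}\|)r_{ij}^s<2^{-i}$, a quasisymmetric self-homeomorphism $f$ of $V$ shrinking the radii to $R_{ij}=C(\eps,s)r_{ij}^{s/\eps}$. The only genuinely new step in the paper's proof of Theorem~\ref{th:main-1} is then elementary but careful parameter bookkeeping --- see the manipulations around \eqref{eq:C-delta-1}--\eqref{eq:C-delta-2} --- to verify that the compressed cover respects the diameter constraint $\eta^{1/\theta'}\le R_{ij}\le\eta$ for some $\theta'=\theta-\nu^{-1}$ slightly less than $\theta$, followed by continuity of $\theta\mapsto\overline\dim_\theta$ to recover the statement at $\theta$ itself. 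To make your route rigorous you would essentially need to reprove Kovalev's Proposition~4.2 in a less convenient, abstract setting; the gap in your write-up is exactly that missing construction and its quasisymmetry estimate.
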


Coupling the conclusion of Theorem \ref{th:main-1} with certain inequalities relating the panoply of dimensional notions described above yields the following corollary.

\begin{thm}\label{th:applic-1}
Let $(X,d)$ be a totally bounded and doubling metric space such that $\dim_{qH} X < 1$. Then
\begin{equation}\label{eq:applic-1}
C\dim_A^\theta X = 0 \qquad \forall \, 0<\theta<1.
\end{equation}
In particular,
\begin{equation}\label{eq:applic-1-a}
C\overline\dim_B X = 0.
\end{equation}
\end{thm}

In particular, if \eqref{eq:HqH} is satisfied for a totally bounded and doubling space, then $\dim_H X < 1$ implies that \eqref{eq:applic-1} and hence in particular \eqref{eq:applic-1-a} hold true.

We say that a metric space $(X,d)$ has {\it vanishing conformal Assouad spectrum} if $C\dim_A^\theta X = 0$ for every $0<\theta<1$. Note that we do not know whether this condition is the same as the condition that $(X,d)$ has vanishing conformal quasi-Assouad dimension: $C\dim_{qA} X = 0$. Although $\dim_{qA} X$ is equal to the limit as $\theta \nearrow 1$ of $\dim_A^\theta X$, it is not clear whether the interchange of that limit with the infimum in the definition of conformal dimension is justified. See also Question \ref{q:continuity-of-conformal-Assouad-spectrum}.

Note that Theorem \ref{th:applic-1} is new only when $\overline\dim_B X \ge 1$; if $\overline\dim_B X < 1$ then the conclusion follows from Kovalev's theorem and known relationships between box-counting dimension and the Assouad spectrum. We also remark that the proof of Theorem \ref{th:applic-1} establishes a related result: if $X \subset \R^n$ and $\overline\dim_B X < n$, then the quasisymmetric reduction of Assouad spectrum guaranteed by the theorem can in fact be accomplished via global quasiconformal maps of $\R^n$.

Theorem \ref{th:applic-1} has an antecedent in work of Mackay on Assouad dimension and conformal Assouad dimension of Bedford--McMullen carpets \cite{Mackay}. Indeed, it follows from results in \cite{Mackay} that any such carpet with (quasi-)Hausdorff dimension strictly less than one necessarily has conformal (quasi-)Assouad dimension equal to zero. Note that Bedford--McMullen carpets $E$ satisfy \eqref{eq:HqH}; see \cite{ffk:intermediate} for the first proof of this fact and \cite{bk:intermediate-BM-carpets} for precise asymptotics for $\overline\dim_\theta X$ as $\theta \to 0$. Theorem \ref{th:applic-1}, which has the same hypothesis (quasi-Hausdorff dimension strictly less than one) but a weaker conclusion (vanishing conformal Assouad spectrum), applies to all metric spaces, not merely those originating from certain planar self-affine constructions.

Furthermore, Theorem \ref{th:applic-1} is sharp in the following sense: {\em there exists a compact, totally disconnected, planar set $E$ with $C\dim_H E = \dim_A E = 1$.} Note that these conclusions imply that all notions of dimension and conformal dimension described above are equal to one. Total disconnectivity is included here to rule out elementary connected examples. See Example \ref{ex:binder-hakobyan-li} for details. For earlier examples along these lines (phrased only for Hausdorff dimension), see \cite[Corollary 2]{bishop-tyson}.

A canonical example of a metric space for which Hausdorff, intermediate, and box-counting dimensions disagree is the polynomially decaying sequence set
\begin{equation}\label{eq:Ep}
E_s := \{m^{-s}: m \in \N \} \cup \{0\}, \qquad s>0.
\end{equation}
Indeed, 
$$
\dim_H E_s = 0 < \overline\dim_\theta E_s = \frac{\theta}{s+\theta} < \overline\dim_B E_s = \frac{1}{s+1}
$$ 
for any $0<\theta<1$. Using these sets, we construct a variety of Euclidean subsets fulfilling the assumptions in Theorem \ref{th:applic-1}. The following sets satisfy the hypotheses in Theorem \ref{th:applic-1}, and hence have vanishing conformal Assouad spectrum. Moreover, all of these examples have $\overline{\dim}_B X \geq 1$ and consequently this conclusion does not follow from Kovalev's theorem.
\begin{itemize}
\item[(a)] $E = Z \times E_s$ for any $s>0$ and $Z \subset \R^{n-1}$ such that $\dim_{qH} Z < 1$ and $\overline\dim_B Z \ge \frac{s}{s+1}$,
\item[(b)] $E = Z \times E_s$ for any $s>0$ and any Ahlfors $t$-regular $Z$ with $\tfrac{s}{s+1} \le t < 1$,
\item[(c)] $E = E_s \times E_s \subset \R^2$ provided $0<s \le 1$,
\item[(d)] $E = E_s \times E_{1/s} \subset \R^2$ for any $s>0$.
\end{itemize}

\subsection{Sobolev and quasiconformal distortion of intermediate dimension}

Next, we turn to results concerning the Sobolev and quasiconformal distortion properties of intermediate dimension, with applications to the quasiconformal classification of Euclidean sets. In what follows, we restrict to subsets of Euclidean space, although we anticipate that many results also extend to doubling metric measure spaces supporting a Poincar\'e inequality. For recent results on the Sobolev distortion of box-counting dimension of subsets of metric spaces, we refer the reader to \cite{chr:Sobolev-metric}.

To begin, we recall that if $f:E \to \R^N$ is an $\alpha$-H\"older map from a bounded set $E \subset \R^n$, then
\begin{equation}\label{eq:Holder-bound}
\overline\dim_\theta f(E) \le \frac{1}{\alpha} \overline\dim_\theta E.
\end{equation}
See \cite[14.2.1 (5)]{fal:intermediate-survey}. Further estimates for the distortion of intermediate dimension under H\"older maps can be found in \cite[Section 4]{bur:fractional-Brownian}.

The estimate in \eqref{eq:Holder-bound} has immediate applications to the quasiconformal classification problem   beyond what can be gleaned from the Hausdorff and box dimensions alone. For instance, among compact sets $E \subset \R^n$ with Hausdorff dimension zero, vanishing of the quasi-Hausdorff dimension is a quasiconformal invariant.

\begin{prop}\label{thm:application1}
Let $E,F \subset \R^n$, $n \ge 2$, be compact sets with $\dim_H E = \dim_H F =0$. If $\dim_H E = \dim_{qH} E$ and $\dim_H F < \dim_{qH} F$, then no quasiconformal map of $\R^n$ sends $E$ to $F$.
\end{prop}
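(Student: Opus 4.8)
The plan is to argue by contradiction, using the fact that a quasiconformal self-map of $\R^n$ is locally H\"older continuous, together with the H\"older distortion bound \eqref{eq:Holder-bound} for intermediate dimension. Suppose, towards a contradiction, that $g\colon \R^n \to \R^n$ is a $K$-quasiconformal homeomorphism with $g(E)=F$. A classical theorem (Mori in the plane; Gehring and V\"ais\"al\"a in higher dimensions) states that such a $g$ is locally $\alpha$-H\"older continuous with exponent $\alpha = K^{1/(1-n)} \in (0,1]$ depending only on $K$ and $n$. Since $E$ is compact, fix a bounded open set $U$ with $E \subset U$ and $\overline{U}$ compact; covering $\overline{U}$ by finitely many balls on which the local H\"older estimate holds and chaining along paths, one sees that $g$ is \emph{globally} $\alpha$-H\"older on every compact subset of $U$, in particular on $E$. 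Thus $F = g(E)$ is the image of the bounded set $E$ under an $\alpha$-H\"older map.

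Next I would apply \eqref{eq:Holder-bound}: for every $0 < \theta \le 1$ we have
$$
\overline\dim_\theta F = \overline\dim_\theta g(E) \le \frac{1}{\alpha}\,\overline\dim_\theta E.
$$
Letting $\theta \searrow 0$ and using that $\overline\dim_\theta$ is monotone in $\theta$ (so the limits defining quasi-Hausdorff dimension exist), this yields $\dim_{qH} F \le \tfrac{1}{\alpha}\dim_{qH} E$. By hypothesis $\dim_H E = 0$ and $\dim_H E = \dim_{qH} E$, hence $\dim_{qH} E = 0$ and therefore $\dim_{qH} F = 0$. On the other hand, the assumption $\dim_H F < \dim_{qH} F$ combined with $\dim_H F = 0$ forces $\dim_{qH} F > 0$, a contradiction. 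Consequently no quasiconformal map of $\R^n$ can send $E$ to $F$.

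The argument is a soft consequence of known facts, so there is no serious obstacle. The only point requiring mild care is the passage from the a priori merely local H\"older continuity of $g$ to a genuine global $\alpha$-H\"older bound valid on the compact set $E$; this is routine via the compactness/chaining argument indicated above. Alternatively, one may invoke the extension of $g$ to a quasiconformal self-map of the sphere $S^n$, which is globally H\"older with respect to the chordal metric, and note that this metric is bi-Lipschitz equivalent to the Euclidean metric on bounded regions. I would also remark that the statement is symmetric in form but not in use: only the H\"older bound for $g$ itself (not for $g^{-1}$) is needed, since we only require an \emph{upper} bound for the intermediate dimensions of the image.
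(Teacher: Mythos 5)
Your proof is correct and takes essentially the same approach as the paper: a proof by contradiction via the local H\"older continuity of $K$-quasiconformal maps, the H\"older distortion estimate \eqref{eq:Holder-bound}, and passage to the limit as $\theta \searrow 0$. Your more careful treatment of the local-to-global H\"older upgrade on the compact set $E$, and your use of the exponent $K^{1/(1-n)}$ (versus the paper's $1/K$, which agrees for $n=2$), are harmless refinements, since the argument only needs \emph{some} positive H\"older exponent depending on $K$ and $n$.
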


\begin{proof}[Proof of Proposition \ref{thm:application1}]
Assume that a $K$-quasiconformal map $f:\R^n \to \R^n$ exists with $f(E) = F$. Then $f$ is locally $\alpha$-H\"older continuous with $\alpha = \tfrac1K$. Equation \eqref{eq:Holder-bound} gives
\begin{equation}\label{theta-F-K-theta-E}
{\overline\dim}_\theta F \le K {\overline\dim}_\theta E
\end{equation}
for all $0<\theta<1$. If $0 = \dim_H E = \dim_{qH} E$ and $0 = \dim_H F < \dim_{qH} F$, then we obtain a contradiction in the limit as $\theta \searrow 0^+$.
\end{proof}

Recall that if ${\overline\dim}_B E = \dim_A E$, then ${\overline\dim}_\theta E = \overline\dim_B E$ for all $\theta>0$, hence $\dim_{qH} E = {\overline\dim}_B E$, see \cite{ffk:intermediate}. We obtain the following corollary.

\begin{cor}\label{cor:application1}
Let $E,F \subset \R^n$, $n \ge 2$, be compact sets with $\dim_{qH} E = 0$ and $\dim_H F = 0$. If $0 < {\overline\dim}_B F = \dim_A F$, then no quasiconformal map of $\R^n$ sends $E$ to $F$.
\end{cor}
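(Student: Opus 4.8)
The plan is to derive this directly from Proposition~\ref{thm:application1}; the only work is to check that its hypotheses are met by $E$ and $F$, so the corollary is essentially a bookkeeping step.

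First I would address $E$. The inequality $\dim_H X \le \dim_{qH} X$ holds for every totally bounded metric space (this is recorded in the chain of inequalities following \eqref{eq:qHdim}), so the assumption $\dim_{qH} E = 0$ forces $\dim_H E = 0$ as well; in particular $\dim_H E = \dim_{qH} E$, which is exactly the condition the proposition imposes on its first set. Next I would address $F$. Here the key input is the fact, quoted from \cite{ffk:intermediate} in the paragraph preceding the corollary, that ${\overline\dim}_B F = \dim_A F$ implies ${\overline\dim}_\theta F = {\overline\dim}_B F$ for all $\theta > 0$, and hence $\dim_{qH} F = {\overline\dim}_B F$. Combining this with the hypotheses $0 < {\overline\dim}_B F = \dim_A F$ and $\dim_H F = 0$ gives $\dim_H F = 0 < {\overline\dim}_B F = \dim_{qH} F$, so that $\dim_H F < \dim_{qH} F$ --- precisely the condition the proposition imposes on its second set.

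With both sets verified, Proposition~\ref{thm:application1} applies and yields that no quasiconformal map of $\R^n$ carries $E$ to $F$, which is the assertion of the corollary. I do not anticipate any genuine difficulty: the argument simply assembles the monotonicity $\dim_H X \le \dim_{qH} X$, the criterion from \cite{ffk:intermediate}, and Proposition~\ref{thm:application1}. The one point requiring a little care is obtaining the \emph{strict} inequality $\dim_H F < \dim_{qH} F$ rather than merely $\leq$, and this is exactly where the hypothesis ${\overline\dim}_B F > 0$ is used.
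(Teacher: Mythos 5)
Your proof is correct and follows exactly the route the paper intends: the paper's only stated input is the remark preceding the corollary (that ${\overline\dim}_B F = \dim_A F$ forces $\dim_{qH} F = {\overline\dim}_B F$, via \cite{ffk:intermediate}), and your bookkeeping — deducing $\dim_H E = 0 = \dim_{qH} E$ from $\dim_{qH} E = 0$ and the monotonicity $\dim_H \le \dim_{qH}$, and deducing $\dim_H F = 0 < {\overline\dim}_B F = \dim_{qH} F$ — is precisely what is needed to invoke Proposition \ref{thm:application1}.
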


The polynomially decaying sequence set $E_s$, $s>0$, satisfies the hypotheses of Proposition \ref{thm:application1} and Corollary \ref{cor:application1}. Identifying $E_s$ with the subset $\{x \be_1:x \in E_s \} \subset \R^n$ for $n \ge 2$, we conclude that $E_s$ is not quasiconformally equivalent to any subset of $\R^n$ of zero Hausdorff dimension which also has equal and positive box-counting and Assouad dimension.

We obtain similar conclusions when both quasi-Hausdorff dimensions equal zero, if the rates of convergence of intermediate dimension as $\theta \to 0^+$ differ sufficiently.

\begin{prop}\label{thm:application2}
Let $E,F \subset \R^n$, $n \ge 2$, be compact sets with $\dim_{qH} E = \dim_{qH} F = 0$. Assume that $\theta \mapsto {\overline\dim}_\theta E$ and $\theta \mapsto {\overline\dim}_\theta F$ are right differentiable at $\theta=0$, but that 
\begin{equation}\label{eq:application2}
 \left. \frac{d}{d\theta} \, {\overline\dim}_\theta E \right|_{\theta = 0} < \left. \frac{d}{d\theta} \, {\overline\dim}_\theta F \right|_{\theta = 0} = + \infty.
\end{equation}
Then no quasiconformal map of $\R^n$ sends $E$ to $F$.
\end{prop}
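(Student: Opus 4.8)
The plan is to run the same argument as in the proof of Proposition~\ref{thm:application1}, but to extract a contradiction from the \emph{rates of convergence} of the two dimension functions at $\theta=0$ rather than from their limiting values. Suppose, for contradiction, that there is a $K$-quasiconformal map $f\colon\R^n\to\R^n$ with $f(E)=F$. Exactly as in Proposition~\ref{thm:application1}---using that $f$ is locally $\tfrac1K$-H\"older continuous, that $E$ is compact, and the finite stability of intermediate dimension---one obtains \eqref{theta-F-K-theta-E}, namely
\begin{equation*}
\overline\dim_\theta F \le K\,\overline\dim_\theta E \qquad \text{for all } 0<\theta<1 .
\end{equation*}

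Next I would exploit the hypothesis $\dim_{qH}E=\dim_{qH}F=0$. By definition this says $\overline\dim_\theta E\to 0$ and $\overline\dim_\theta F\to 0$ as $\theta\searrow 0$; writing $\overline\dim_0 X=\dim_H X$ (here equal to $0$ for both sets), the previous display, after dividing by $\theta>0$, becomes
\begin{equation*}
\frac{\overline\dim_\theta F-\overline\dim_0 F}{\theta}\ \le\ K\,\frac{\overline\dim_\theta E-\overline\dim_0 E}{\theta}.
\end{equation*}
Letting $\theta\searrow 0$, the left-hand side converges to the right derivative of $\theta\mapsto\overline\dim_\theta F$ at $\theta=0$, which equals $+\infty$ by \eqref{eq:application2}, while the right-hand side converges to $K$ times the right derivative of $\theta\mapsto\overline\dim_\theta E$ at $\theta=0$, a finite number by \eqref{eq:application2}. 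This is the desired contradiction.

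This is a short argument and I do not anticipate a genuine obstacle; the substance is entirely in the H\"older distortion bound \eqref{eq:Holder-bound}. The only points needing a word of care are bookkeeping ones already handled in Proposition~\ref{thm:application1}: passing from the local H\"older continuity of $f$ to the global inequality \eqref{theta-F-K-theta-E} on the compact set $E$ (via finite stability and monotonicity of $\overline\dim_\theta$), and observing that right differentiability at $\theta=0$ together with $\dim_{qH}X=0$ makes $\lim_{\theta\searrow 0}\overline\dim_\theta X/\theta$ equal to the stated one-sided derivative, so that the limiting inequality above is legitimate. It is worth flagging that the hypothesis can be weakened: one really only needs $\limsup_{\theta\searrow0}\overline\dim_\theta E/\theta<\infty$ and $\liminf_{\theta\searrow0}\overline\dim_\theta F/\theta=+\infty$ for the same contradiction to go through.
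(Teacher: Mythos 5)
Your proof is correct and is essentially identical to the paper's, which likewise derives \eqref{theta-F-K-theta-E} from the H\"older estimate, divides by $\theta$, and lets $\theta\searrow 0^+$ to contradict \eqref{eq:application2}. Your closing observation that only $\limsup_{\theta\searrow0}\overline\dim_\theta E/\theta<\infty$ and $\liminf_{\theta\searrow0}\overline\dim_\theta F/\theta=+\infty$ are needed is a valid, slightly sharper statement of what the argument actually uses.
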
 

The proof of this result is similar to that of Proposition \ref{thm:application1} discussed above. Assume that $\dim_{qH} E  = \dim_{qH} F  = 0$, that \eqref{eq:application2} holds, and that $f$ is a $K$-quasiconformal (and hence $\alpha$-H\"older continuous) map with $f(E)=F$. Dividing both sides of \eqref{theta-F-K-theta-E} by $\theta$ and taking the limit as $\theta \searrow 0^+$ leads to a contradiction with \eqref{eq:application2}.

The conclusion in Proposition \ref{thm:application2} no longer holds true if $\left. \tfrac{d}{d\theta} \, {\overline\dim}_\theta E \right|_{\theta = 0}$ and $\left. \tfrac{d}{d\theta} \, {\overline\dim}_\theta F \right|_{\theta = 0}$ are both assumed to be finite. However, in certain cases one can still draw conclusions regarding the minimal dilatation for any quasiconformal map sending $E$ to $F$; see for instance Example \ref{ex:qc-equiv-of-e-s-sets}.

The preceding conclusions rely only on the H\"older distortion properties of intermediate dimension, but are restricted to the sets of Hausdorff dimension zero. To move beyond this restricted setting, we next investigate the distortion of intermediate dimension under Sobolev and quasiconformal mappings. 

Assume that $f:\Omega \to \Omega'$ is a $K$-quasiconformal mapping between domains in $\R^n$, $n \ge 2$. Basic definitions and properties of quasiconformal maps are recalled in section \ref{sec:background}. By Gehring's fundamental higher integrability theorem \cite{geh:higher}, $f$ lies in a local Sobolev space $W^{1,p}_\loc$ for some $p>n$ depending only on $n$ and $K$. Denote by
$$
p^\Sob_n(K)>n
$$ 
the supremum of all $p>n$ so that every $K$-quasiconformal map $f$ between $n$-dimensional domains lies in $W^{1,p}_\loc$. The value $p^\Sob_2(K) = \tfrac{2K}{K-1}$ was computed by Astala \cite{ast:2d-higher}, but the well-known conjecture $p^\Sob_n(K) = \tfrac{nK}{K-1}$ remains open for all dimensions $n \ge 3$. 

Recalling that the inverse of a $K$-quasiconformal map between domains in $\R^n$ is $K^{n-1}$-quasiconformal, we now review known results on Sobolev and quasiconformal dimension distortion. The starting point for this line of research is Gehring and V\"ais\"al\"a's 1973 result on the distortion of Hausdorff dimension.

\begin{thm}[Gehring--V\"ais\"al\"a]\label{th:gv}
Let $f:\Omega \to \Omega'$ be a $K$-quasiconformal map between domains in $\R^n$, $n \ge 2$. Let $E \subset \Omega$ satisfy $\dim_H E = s \in (0,n)$. Then
\begin{equation}\label{eq:qc-dim-H-bound}
\left( 1 - \frac{n}{p^\Sob_n(K)} \right) \left( \frac1s - \frac1n \right) \le \frac1{\dim_H f(E)} - \frac1n \le \left( 1 - \frac{n}{p^\Sob_n(K^{n-1})} \right)^{-1} \left( \frac1s - \frac1n \right).
\end{equation}
\end{thm}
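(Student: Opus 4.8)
\emph{Proof strategy.} The plan is to isolate a single Sobolev dimension‑distortion estimate and then invoke it twice — once for $f$ and once for its inverse — using the classical fact, recalled above, that $f^{-1}$ is $K^{n-1}$‑quasiconformal. Concretely, the estimate to establish is: \emph{if $g\in W^{1,p}_{\loc}(\Omega,\R^n)$ for some $p>n$ and $A\subset\Omega$ has $\dim_H A=a\in(0,n)$, then}
$$
\Bigl(1-\tfrac np\Bigr)\Bigl(\tfrac1a-\tfrac1n\Bigr)\ \le\ \frac1{\dim_H g(A)}-\frac1n.
$$
Granting this, the left inequality of \eqref{eq:qc-dim-H-bound} follows by applying the estimate to $g=f$ for every $p<p^\Sob_n(K)$ — legitimate by Gehring's higher integrability theorem \cite{geh:higher} and the definition of $p^\Sob_n(K)$ — and letting $p\nearrow p^\Sob_n(K)$. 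For the right inequality one first checks $\dim_H f(E)\in(0,n)$: the bound $<n$ is the left inequality just obtained, while the bound $>0$ holds because $f^{-1}$ is locally H\"older continuous, so $s=\dim_H E=\dim_H f^{-1}(f(E))\le (1/\alpha)\dim_H f(E)$ for the H\"older exponent $\alpha=\alpha(n,K)>0$ of $f^{-1}$. Then, writing $t=\dim_H f(E)$ and applying the estimate to $g=f^{-1}\in W^{1,q}_{\loc}$ with $A=f(E)$, for each $q<p^\Sob_n(K^{n-1})$ one obtains $(1-\tfrac nq)(\tfrac1t-\tfrac1n)\le\tfrac1s-\tfrac1n$; rearranging and letting $q\nearrow p^\Sob_n(K^{n-1})$ gives the right‑hand bound.

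It remains to sketch the proof of the Sobolev estimate. Fix $a<t_0<n$, so that $\mathcal H^{t_0}(A)=0$, and set
$$
t'\ :=\ \frac{p\,t_0}{p-n+t_0},\qquad\text{equivalently}\qquad \frac1{t'}-\frac1n\ =\ \Bigl(1-\tfrac np\Bigr)\Bigl(\tfrac1{t_0}-\tfrac1n\Bigr).
$$
Since $\dim_H$ is countably stable we may assume $A$ is bounded with $\overline A\subset\Omega_0\Subset\Omega$, whence $M:=\int_{\Omega_0}|Dg|^p<\infty$. By Morrey's inequality, for every ball $B=B(x,r)\subset\Omega_0$ one has $\diam g(B)\le C(n,p)\,r^{\,1-n/p}\bigl(\int_B|Dg|^p\bigr)^{1/p}$. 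Given $\eps>0$, cover $A$ by balls $B_i=B(x_i,r_i)$ with $x_i\in A$, each $r_i$ small, and $\sum_i r_i^{t_0}<\eps$; by the Besicovitch covering theorem these split into $N=N(n)$ subfamilies $\mathcal F_1,\dots,\mathcal F_N$ of pairwise disjoint balls whose union still covers $A$. For each fixed $m$, disjointness gives $\sum_{B\in\mathcal F_m}\int_B|Dg|^p\le M$, so H\"older's inequality with exponents $\tfrac p{t'}$ and $\tfrac p{p-t'}$ (admissible since $n<p$ forces $t'<p$) yields
$$
\sum_{B\in\mathcal F_m}(\diam g(B))^{t'}\ \le\ C\Bigl(\sum_{B\in\mathcal F_m}r_B^{\,t'(1-n/p)\frac p{p-t'}}\Bigr)^{1-t'/p}M^{t'/p}\ =\ C\Bigl(\sum_{B\in\mathcal F_m}r_B^{t_0}\Bigr)^{1-t'/p}M^{t'/p},
$$
the final equality being precisely the choice of $t'$. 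Summing over $m$ and noting $\diam g(B_i)\to0$ as $r_i\to0$ by uniform continuity of $g$ on $\overline{\Omega_0}$, we get $\mathcal H^{t'}(g(A))\le C\,N\,\eps^{\,1-t'/p}M^{t'/p}\to0$. Hence $\dim_H g(A)\le t'$; letting $t_0\searrow a$ gives the estimate.

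The decisive point is the algebraic pairing in the H\"older step: the Morrey modulus $1-n/p$ and the integrability exponent $p$ must be balanced against the covering exponent $t_0$ exactly so that the gradient energy enters only through the finite constant $M$ while the residual geometric sum is precisely $\sum r_B^{t_0}$; this is what forces the reciprocal‑affine shape $\tfrac1{\dim}-\tfrac1n$ of the conclusion, and it recovers Astala's two‑dimensional distortion bounds upon substituting $p^\Sob_2(K)=\tfrac{2K}{K-1}$. I expect the only genuine technical care to lie in the covering bookkeeping — one must decouple the per‑ball oscillation estimate from the global energy bound, which is why Besicovitch's theorem (finitely many disjoint subfamilies, dimensional overlap constant) is used in place of a Vitali‑type enlargement, and why the localisation $\Omega_0\Subset\Omega$ is needed so that $M<\infty$. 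The remaining ingredients, Morrey's embedding and Gehring's higher integrability, are classical; indeed Theorem \ref{th:gv} itself is due to Gehring and V\"ais\"al\"a, and the scheme above is the model for the intermediate‑dimension version developed later in the paper.
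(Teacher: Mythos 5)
Your proof is correct and follows essentially the same strategy the paper recalls in Section 4.1: establish the one-sided Sobolev distortion estimate $\dim_H g(A)\le\tau_p(\dim_H A)$ via the Morrey--Sobolev inequality and H\"older's inequality (with the exponent identity $\tau(1-\tfrac{n}{p})\tfrac{p}{p-\tau}=\sigma$), then apply it to both $f$ and its $K^{n-1}$-quasiconformal inverse while letting the Sobolev exponent approach $p^\Sob_n$. The only variation is your use of the Besicovitch covering theorem where the paper uses essentially disjoint dyadic cubes; note that Besicovitch produces a \emph{subcover} of $A$ that splits into $N(n)$ disjoint subfamilies rather than partitioning the given countable family of balls, but since passing to a subcover only decreases $\sum_i r_i^{t_0}$, the argument goes through unchanged.
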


For convenience, we introduce the following notation. For $p>n\ge 1$ and $0<s<n$, we set
\begin{equation}\label{eq:tau-s}
\tau_p(s) := \frac{ps}{p-n+s},
\end{equation}
\begin{equation}\label{eq:Phi-s}
\Phi(s) := \frac1s - \frac1n,
\end{equation}
and
\begin{equation}\label{eq:alpha-p}
\alpha(p) = 1-\frac{n}{p}.
\end{equation}
We recall that every $W^{1,p}$ map defined on a domain in $\R^n$ is locally $\alpha(p)$-H\"older continuous. Moreover,
\begin{equation}\label{eq:relationship}
\alpha(p) \Phi(s) = \Phi(\tau_p(s)).
\end{equation}
With this notation in place, \eqref{eq:qc-dim-H-bound} can be restated as follows:
\begin{equation}\label{eq:qc-dim-H-bound2}
\alpha(p^\Sob_n(K)) \Phi(\dim_H E) \le \Phi(\dim_H f(E)) \le \alpha(p^\Sob_n(K^{n-1}))^{-1} \Phi(\dim_H E).
\end{equation}
In view of \eqref{eq:relationship}, the left hand inequality in \eqref{eq:qc-dim-H-bound2} coincides with the estimate 
$$
\dim_H f(E) \le \tau_{p^\Sob_n(K)}(\dim_H E),
$$
whilst the right hand inequality can be rewritten as
$$
\dim_H E \le \tau_{p^\Sob_n(K^{n-1})}(\dim_H f(E)).
$$
Kaufman \cite{kau:sobolev} observed that the estimate
\begin{equation}\label{eq:Sob-dim-H-bound}
\dim_H f(E) \le \tau_p(\dim_H E).
\end{equation}
continues to hold for any $W^{1,p}$ map (not necessarily quasiconformal or even injective) $f:\Omega \to \R^N$, $\Omega \subset \R^n$, and for $E \subset \Omega$. In the same paper, Kaufman proved a corresponding statement for box-counting dimension: if $E \subset \Omega \subset \R^n$ is compact,\footnote{Kaufman does not include the compactness assumption, but phrases his conclusion for packing dimension instead of box-counting dimension. In the case of box-counting dimension itself, which is only well-defined for bounded sets, compactness appears to be necessary.} then
\begin{equation}\label{eq:Sob-dim-B-bound}
{\overline\dim}_B f(E) \le \tau_p({\overline\dim}_B E) \qquad \forall \, f \in W^{1,p}(\Omega:\R^N);
\end{equation}
consequently, if $f:\Omega \to \Omega'$ is $K$-quasiconformal, then
\begin{equation}\label{eq:qc-dim-B-bound}
\alpha(p^\Sob_n(K)) \Phi({\overline\dim}_B E) \le \Phi({\overline\dim}_B f(E)) \le \alpha(p^\Sob_n(K^{n-1}))^{-1} \Phi({\overline\dim}_B E)
\end{equation}
for compact sets $E \subset \Omega$. Kaufman also gave two distinct probabilistic arguments to demonstrate (in a nonconstructive fashion) the sharpness of both \eqref{eq:Sob-dim-H-bound} and \eqref{eq:Sob-dim-B-bound} in a strong sense: for any closed set $E \subset \R^n$ with $\dim E \ge s$ (where $\dim$ denotes either $\dim_H$ or ${\overline\dim}_B$; if $\dim = {\overline\dim}_B$ then assume $E$ compact) and any $p>n$, there exists $f \in W^{1,p}(\R^n:\R^n)$ so that $\dim f(E) \ge \tau_p(s)$. 

Sharpness of the quasiconformal distortion estimate \eqref{eq:qc-dim-H-bound} for Hausdorff dimension was established by Astala in \cite{ast:2d-higher}. Recently, Fuhrer, Ransford and Younsi \cite{fry:inf-harmonic} introduced a novel approach to the distortion of Hausdorff and box-counting dimensions under quasiconformal mappings using holomorphic motions and inf-harmonic functions. Their methodology also gives rise to examples demonstrating the sharpness of such estimates in the planar case.

Analogous results were established in \cite{tys:assouad} and \cite{ct:qc-assouad-spectrum} for the Assouad dimension and Assouad spectrum. We omit the precise statements here in the interest of brevity, but note some distinctive features. First, the distortion estimates for both $\dim_A$ and $\dim_A^\theta$ hold only for quasiconformal homeomorphisms and not for general supercritical Sobolev mappings. 
In addition, the role of the Sobolev higher integrability exponent $p^\Sob_n(K)$ in the relevant distortion estimates, e.g.\ \eqref{eq:qc-dim-H-bound}, is here replaced by the {\it reverse H\"older higher integrability exponent} $p^\RH_n(K)$. See Remark \ref{rem:assouad-case} for more information.

Our next result is a Sobolev distortion theorem for intermediate dimensions which interpolates between \eqref{eq:Sob-dim-H-bound} and \eqref{eq:Sob-dim-B-bound} and sharpens the H\"older estimate \eqref{eq:Holder-bound} for the subclass of supercritical Sobolev maps.

\begin{thm}\label{thm:sob_dim_dist_intermediate}
Let $\Omega \subset \R^n$, $n \ge 1$, be a domain, let $f \in W^{1,p}(\Omega:\R^N)$, $p>n$, be a continuous map, and let $E \subset \Omega$ be compact. Then
\begin{equation}\label{eq:intermediate-Sobolev}
{\overline\dim}_\theta f(E) \le \tau_p({\overline\dim}_\theta E) = \frac{p \, {\overline\dim}_\theta E}{p-n+{\overline\dim}_\theta E} \quad \forall \, 0<\theta<1.
\end{equation}
\end{thm}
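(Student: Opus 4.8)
The plan is to adapt Kaufman's covering argument for box-counting dimension, exploiting the diameter-constraint structure built into the intermediate dimensions. The key analytic input is the following local oscillation estimate for Sobolev maps, which underlies \eqref{eq:Sob-dim-B-bound}: if $f \in W^{1,p}(\Omega:\R^N)$ with $p>n$, then on a cube $Q \subset \Omega$ of side length $r$ one has a Morrey-type bound $\diam f(Q) \lesssim r^{1-n/p} \big( \int_Q |Df|^p \big)^{1/p}$, and more usefully, for a family of disjoint cubes $\{Q_i\}$ of a common side length $r$, H\"older's inequality gives $\sum_i (\diam f(Q_i))^{t} \lesssim r^{t(1-n/p) - n} \sum_i \int_{Q_i} |Df|^p \cdot (\text{normalisation})$, so that choosing $t$ appropriately and summing over the $p$-integrability of $Df$ controls $\sum_i (\diam f(Q_i))^t$ in terms of $r$ to a suitable power. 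This is exactly the mechanism that produces the exponent transformation $s \mapsto \tau_p(s)$, via the identity $\alpha(p)\Phi(s) = \Phi(\tau_p(s))$ recorded in \eqref{eq:relationship}.

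First I would fix $0<\theta<1$, set $s := \overline\dim_\theta E$ and take any $t > \tau_p(s)$; it suffices to show $\overline\dim_\theta f(E) \le t$, i.e.\ to produce, for every small $\eta>0$, admissible coverings of $f(E)$ (obeying the two-sided diameter constraint \eqref{eq:int-dim-control} for some scale) with $\sum (\diam)^t$ small. By definition of $\overline\dim_\theta E$, for each small $\delta>0$ there is a covering $\{A_i\}$ of $E$ with $\delta^{1/\theta} \le \diam A_i \le \delta$ and $\sum_i (\diam A_i)^{s'} \le 1$ for some $s' \in (s, \tau_p^{-1}(t))$ — where $\tau_p^{-1}(t)$ makes sense since $\tau_p$ is increasing and we may pick $t$ so that $\tau_p^{-1}(t) < n$. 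I would replace each $A_i$ by a bounded overlap family of dyadic-type cubes $Q_{i,j}$ of side comparable to $\diam A_i$ (so the cube side lengths range over $[\delta^{1/\theta}, \delta]$, up to constants), keeping $\sum_{i,j} (\text{side } Q_{i,j})^{s'} \lesssim 1$. Grouping the cubes according to dyadic side length $r \in \{2^{-k}\}$ within the band $[c\delta^{1/\theta}, c\delta]$, I apply the Sobolev oscillation/H\"older estimate on each group: writing $t = \tau_p(s'')$ for some $s'' \in (s', \tau_p^{-1}(t))$, the estimate yields $\sum_j (\diam f(Q_{i,j}))^t \lesssim r^{\,t\alpha(p) - n} \sum_j \int_{Q_{i,j}} |Df|^p = r^{-s''}\sum_j \int_{Q_{i,j}}|Df|^p$ (using $t\alpha(p) - n = \alpha(p)\Phi(s'')\cdot(-n\cdot\text{stuff})$; precisely $t\alpha(p)-n$ equals $-s''$ by \eqref{eq:relationship} since $\Phi(t) = \alpha(p)\Phi(s'')$ rearranges to $t\alpha(p) = \frac{nt}{n}\cdot\ldots$ — this is the routine computation I will carry out carefully). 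Since the cubes in each group have bounded overlap and $r^{-s''} \le r^{-s'}$ for the relevant scales while $\sum_i (\text{side }Q_i)^{s'}$ is summable, and $\int_\Omega |Df|^p < \infty$, summing over all scales and all $i$ gives $\sum_{i,j}(\diam f(Q_{i,j}))^t \lesssim \int_\Omega |Df|^p < \infty$, a bound independent of $\delta$ — and in fact, by absolute continuity of the integral, the contribution from a neighbourhood of $E$ of small measure is small, which is what is needed as $\delta \to 0$.

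The main obstacle is \textbf{reconciling the two-sided diameter constraint \eqref{eq:int-dim-control} for $f(E)$ with the image sets $f(Q_{i,j})$}, whose diameters are not controlled below: a Sobolev map can be wildly degenerate, so some $\diam f(Q_{i,j})$ may be far smaller than $(\diam f(E)\text{-scale})^{1/\theta}$, and the covering $\{f(Q_{i,j})\}$ need not be admissible. The fix is the standard one for box-counting-type arguments under this constraint: group together (cluster) those image sets that are too small, replacing a cluster lying in a common ball by a single set of the correct minimal diameter $\delta'^{1/\theta}$ where $\delta'$ is the target top scale for $f(E)$; one must check that this regrouping does not increase $\sum (\diam)^t$ by more than a constant factor, which holds because $t < n$ is fixed and the number of small image sets absorbed into one ball of radius $\delta'^{1/\theta}$ contributes at most $(\delta'^{1/\theta})^t$ per ball while their total count is controlled by the covering number bound from the Sobolev estimate at the relevant scale. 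Equivalently, one can bypass clustering by using the reformulation of intermediate dimension via covers with $\diam A_i \in [\delta^{1/\theta},\delta]$ and directly estimating the covering numbers $N(f(E), \rho)$ restricted to $\rho \in [\delta', \delta'^{1/\theta}]$ from the cube data — this is likely the cleanest route and is how I would actually organise the write-up. The case $n=1$ is permitted in the hypothesis and should be handled by the same argument (with $\alpha(p) = 1 - 1/p$), noting no orientation or injectivity is used anywhere.
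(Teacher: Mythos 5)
Your overall direction is right --- adapt Kaufman's covering argument, treat the cover by dyadic cubes, split the estimate between a H\"older piece and an admissibility/regrouping piece --- but there is a genuine gap on exactly the point you did not flag, namely the cubes whose images are \emph{too large} rather than too small. You correctly observe that some $f(Q_{ij})$ may have diameter far below the lower admissibility threshold and propose to fix this by clustering or inflating; that part is fine and essentially matches what the paper does (it simply inflates each such image to a superset of diameter $\eta^{1/\theta}$ and counts them using the cardinality bound $\#\cI \le \delta^{-\sigma/\theta}\eps$). But you never address the opposite failure: the Morrey--Sobolev inequality $\diam f(Q) \lesssim (\diam Q)^{1-n/p}\bigl(\int_Q|Df|^p\bigr)^{1/p}$ allows $\diam f(Q)$ to \emph{exceed} the target upper scale $\eta$ for cubes where $|Df|^p$ concentrates. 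Controlling $\sum_{i,j}(\diam f(Q_{ij}))^t$ by H\"older, as you propose, does nothing to bound the individual diameters from above, so the collection $\{f(Q_{ij})\}$ need not be admissible for $\overline\dim_\theta f(E)$; and because $f(Q_{ij})$ is a connected set of large diameter, you cannot simply re-cover it by one set of diameter $\eta$, nor cheaply by many, without further information. Your alternative route (``directly estimating covering numbers'') hits the same wall: you would still need to know how to cover the large sets $f(Q_{ij})$ at scale $\eta$, which amounts to descending into sub-cubes of $Q_{ij}$.

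This is where the paper invokes the key stopping-time device, following Kaufman's box-counting argument: call a dyadic cube $Q$ \emph{$\eta$-major} if $\diam f(Q)>\eta$, \emph{$\eta$-minor} otherwise, and \emph{$\eta$-critical} if $Q$ is minor but its parent is major. For each covering cube $Q_i$ that happens to be $\eta$-major, one does not use $f(Q_i)$ as a cover element at all; instead one recurses through the dyadic descendants of $Q_i$ until reaching $\eta$-critical sub-cubes, whose images are admissible (after the same inflation step in the too-small case). The crucial quantitative input is Lemma~\ref{lem:size-estimate}, which bounds the total number of $\eta$-major (hence of $\eta$-critical) cubes across all scales $2^{-m}\le\delta$ by $C\,\|Df\|_p^p\,\delta^{-\sigma+\tilde\gamma}$ with $\tilde\gamma>0$; this positive gain $\tilde\gamma$ (which exists precisely because the paper introduces two auxiliary exponents $\sigma<\sigma'$ rather than one) is what makes the contribution of the critical cubes to $\sum(\diam S)^{\tau'}$ go to zero. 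Without this stopping-time count, the argument as you have outlined it cannot be completed. The remaining ingredients you list (dyadic reduction of $\overline\dim_\theta$, H\"older with the exponent identity $\alpha(p)\Phi(\sigma)=\Phi(\tau_p(\sigma))$, inflating small images) are all present in the paper's proof and you have them essentially right.
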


An appeal to the higher integrability theorem yields consequences for quasiconformal mappings.

\begin{cor}\label{cor:qc_dim_dist_intermediate}
Let $f:\Omega \to \Omega'$ be a $K$-quasiconformal mapping between domains in $\R^n$, $n \ge 2$, and let $E$ be a compact subset of $\Omega$. Then for each $0<\theta<1$ we have
\begin{equation}\label{eq:intermediate-quasiconformal}
\alpha(p^\Sob_n(K)) \Phi({\overline\dim}_\theta E) \le \Phi({\overline\dim}_\theta f(E)) \le \alpha(p^\Sob_n(K^{n-1}))^{-1} \Phi({\overline\dim}_\theta E).
\end{equation}
\end{cor}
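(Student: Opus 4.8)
The plan is to derive Corollary~\ref{cor:qc_dim_dist_intermediate} formally from Theorem~\ref{thm:sob_dim_dist_intermediate}, using Gehring's higher integrability theorem~\cite{geh:higher} to supply the Sobolev regularity of $f$ and the identity~\eqref{eq:relationship} to translate between the $\tau_p$ and $\Phi$ formulations; the right-hand estimate in~\eqref{eq:intermediate-quasiconformal} will be obtained by applying the left-hand estimate to the inverse map. I would first dispose of the degenerate cases. If $\overline\dim_\theta E=0$, then $\overline\dim_\theta f(E)=0$ as well, since a $K$-quasiconformal map restricted to a bounded neighbourhood of $E$ is H\"older and~\eqref{eq:Holder-bound} applies, and both inequalities hold under the convention $\Phi(0)=+\infty$; if $\overline\dim_\theta E=n$, then $\Phi(\overline\dim_\theta E)=0$, and both inequalities follow from the trivial bound $\overline\dim_\theta f(E)\le n$ (valid because $f(E)\subset\R^n$) together with the inverse-map argument below. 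So I assume henceforth $0<\overline\dim_\theta E<n$.

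For the left-hand inequality, fix $p$ with $n<p<p^\Sob_n(K)$. Gehring's theorem gives $f\in W^{1,p}_{\loc}(\Omega)$. Since $E$ is compact and $\Omega$ open, I would choose a bounded domain $\Omega''$ with $E\subset\Omega''$ and $\overline{\Omega''}\subset\Omega$; then $f|_{\Omega''}$ is a continuous element of $W^{1,p}(\Omega'':\R^n)$, and Theorem~\ref{thm:sob_dim_dist_intermediate} yields $\overline\dim_\theta f(E)\le\tau_p(\overline\dim_\theta E)$. As $\Phi$ is strictly decreasing on $(0,\infty)$, applying $\Phi$ reverses this inequality, and~\eqref{eq:relationship} turns it into
$$
\Phi(\overline\dim_\theta f(E))\ \ge\ \Phi\bigl(\tau_p(\overline\dim_\theta E)\bigr)\ =\ \alpha(p)\,\Phi(\overline\dim_\theta E).
$$
Letting $p\nearrow p^\Sob_n(K)$ and using that $\alpha(p)=1-n/p$ is continuous and increasing in $p$ (so that non-attainment of the supremum defining $p^\Sob_n(K)$ is harmless) gives $\Phi(\overline\dim_\theta f(E))\ge\alpha(p^\Sob_n(K))\,\Phi(\overline\dim_\theta E)$, as required.

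For the right-hand inequality, I would apply what has just been proved to $g:=f^{-1}:\Omega'\to\Omega$, which is $K^{n-1}$-quasiconformal, and to the compact set $F:=f(E)\subset\Omega'$ (compact as a homeomorphic image of a compact set), noting that $g(F)=E$. This gives
$$
\Phi(\overline\dim_\theta E)=\Phi(\overline\dim_\theta g(F))\ \ge\ \alpha\bigl(p^\Sob_n(K^{n-1})\bigr)\,\Phi(\overline\dim_\theta F)=\alpha\bigl(p^\Sob_n(K^{n-1})\bigr)\,\Phi(\overline\dim_\theta f(E)),
$$
and dividing by the positive number $\alpha(p^\Sob_n(K^{n-1}))$ yields the second inequality in~\eqref{eq:intermediate-quasiconformal}.

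Since all the analytic substance is already contained in Theorem~\ref{thm:sob_dim_dist_intermediate}, I do not expect a genuine obstacle: the corollary is essentially a repackaging of that theorem via higher integrability, in exact parallel with the passages from~\eqref{eq:Sob-dim-H-bound} to~\eqref{eq:qc-dim-H-bound} and from~\eqref{eq:Sob-dim-B-bound} to~\eqref{eq:qc-dim-B-bound}. The only places needing attention are the localisation step (replacing $\Omega$ by a bounded subdomain containing $E$ so that the full-domain Sobolev hypothesis of the theorem is met), the limiting argument in the exponent $p$, keeping the monotonicity direction of $\Phi$ straight, and inserting the correct dilatation $K^{n-1}$ when applying the estimate to $f^{-1}$.
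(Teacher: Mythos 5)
Your proposal is correct and takes exactly the route the paper intends: localize to a bounded subdomain so that Gehring's higher integrability theorem puts $f$ in $W^{1,p}$ for $p$ just below $p^\Sob_n(K)$, invoke Theorem \ref{thm:sob_dim_dist_intermediate}, convert via the identity \eqref{eq:relationship} and the monotonicity of $\Phi$, let $p\nearrow p^\Sob_n(K)$, and get the right-hand bound by applying the left-hand bound to the $K^{n-1}$-quasiconformal inverse. Your handling of the degenerate cases $\overline\dim_\theta E\in\{0,n\}$ and the localisation step are exactly the small details the paper leaves implicit, and they are carried out correctly.
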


In particular, using Astala's theorem we obtain the following estimates in the planar case.

\begin{cor}\label{cor:qc_dim_dist_intermediate-2d}
Let $f:\Omega \to \Omega'$ be a $K$-quasiconformal mapping between domains in $\R^2$, and let $E$ be a compact subset of $\Omega$. Then for each $0<\theta<1$ we have
\begin{equation}\label{eq:intermediate-quasiconformal-2d}
K^{-1} \Phi({\overline\dim}_\theta E) \le \Phi({\overline\dim}_\theta f(E)) \le K \Phi({\overline\dim}_\theta E).
\end{equation}
\end{cor}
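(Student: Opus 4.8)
The plan is to obtain Corollary~\ref{cor:qc_dim_dist_intermediate-2d} as the immediate specialization of Corollary~\ref{cor:qc_dim_dist_intermediate} to the planar case $n=2$, the only additional ingredient being Astala's sharp value of the Sobolev higher integrability exponent in the plane. Recall from the discussion preceding Theorem~\ref{th:gv} that Astala's theorem \cite{ast:2d-higher} gives $p^\Sob_2(K) = \tfrac{2K}{K-1}$ for every $K \ge 1$.

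The first step is to record the elementary arithmetic simplification of the Hölder exponent associated to this integrability exponent. By \eqref{eq:alpha-p} with $n=2$,
\[
\alpha\bigl(p^\Sob_2(K)\bigr) \;=\; 1 - \frac{2}{p^\Sob_2(K)} \;=\; 1 - \frac{2(K-1)}{2K} \;=\; 1 - \frac{K-1}{K} \;=\; \frac1K .
\]
The second step is to observe that, because $n=2$, one has $K^{n-1} = K$, so the exponent governing the upper bound in \eqref{eq:intermediate-quasiconformal} is $p^\Sob_2(K^{n-1}) = p^\Sob_2(K) = \tfrac{2K}{K-1}$ as well, and hence $\alpha\bigl(p^\Sob_2(K^{n-1})\bigr)^{-1} = K$.

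Substituting these two values into the two-sided estimate \eqref{eq:intermediate-quasiconformal} of Corollary~\ref{cor:qc_dim_dist_intermediate} yields, for each $0<\theta<1$,
\[
K^{-1}\,\Phi(\overline\dim_\theta E) \;\le\; \Phi(\overline\dim_\theta f(E)) \;\le\; K\,\Phi(\overline\dim_\theta E),
\]
which is precisely \eqref{eq:intermediate-quasiconformal-2d}. There is no genuine obstacle beyond invoking Astala's theorem, which is the one deep input; the rest is the arithmetic displayed above. One could additionally remark that the resulting estimate is symmetric under interchange of $f$ and $f^{-1}$, consistent with the fact that the inverse of a $K$-quasiconformal planar map is again $K$-quasiconformal.
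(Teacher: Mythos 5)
Your proof is correct and is precisely the route the paper takes: it specializes Corollary~\ref{cor:qc_dim_dist_intermediate} to $n=2$, invokes Astala's value $p^\Sob_2(K)=\tfrac{2K}{K-1}$, and carries out the elementary arithmetic $\alpha(p^\Sob_2(K))=1/K$ together with the observation $K^{n-1}=K$. Nothing is missing.
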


The proof of Theorem \ref{thm:sob_dim_dist_intermediate} involves a synthesis of the arguments used in the proofs of the analogous results for both Hausdorff and box-counting dimensions, following the reasoning in \cite{kau:sobolev}. Starting from a covering of $E$ by a collection of dyadic cubes admissible for the computation of ${\overline\dim}_\theta E$, we separate the collection of images of these cubes into several subclasses, depending on the size of the image in relation to both scales involved in the definition of ${\overline\dim}_\theta$. These various subclasses are treated by distinct methods, some of which lie closer to the argument used for Hausdorff dimension and others of which resemble the box-counting dimension argument. In order to implement the idea effectively, we take advantage of the fact that intermediate dimension can be computed by considering only coverings by dyadic cubes. While that fact is well-known for both Hausdorff and box-counting dimension, it is not explicitly stated in the literature in for intermediate dimensions. We therefore include a proof of this fact, see Proposition \ref{prop:intermediate-dimension-via-dyadic-cubes}. For a discussion of the role of dyadic cube-type constructions in the definitions of various metric dimensions on general doubling metric spaces, we refer to \cite{chr:dyadic}.

In Example \ref{ex:BMcarpets-after-BK}, we apply Corollary \ref{cor:qc_dim_dist_intermediate} to the quasiconformal classification question. Banaji and Kolossv\'ary \cite{bk:intermediate-BM-carpets} computed the full range of intermediate dimensions for Bedford--McMullen carpets. Example 2.14 of \cite{bk:intermediate-BM-carpets} highlights two such carpets $E$ and $F$ with the property that any $\alpha$-H\"older map sending $E$ to $F$ necessarily has $\alpha \le \alpha_0$ for some explicit $\alpha_0 < 1$. It follows that there is an explicit $K_0 > 1$ so that no $K$-quasiconformal map $f$ with $K<K_0$ sends $E$ to $F$. But employing Corollary \ref{cor:qc_dim_dist_intermediate} instead of the H\"older estimate leads to a strictly larger value $\widetilde{K_0} > K_0 >1$ so that the same conclusion continues to hold.

We conclude the paper with examples illustrating sharpness of some of our results. For instance, in Theorem \ref{thm:theta-qc-sharpness} we exhibit a $K$-quasiconformal mapping $f:\C \to \C$ and a compact set $E \subset \C$ so that
\begin{equation}\label{eq:theta-qc-sharpness-introduction}
\frac1K \left( \frac1{{\overline\dim}_\theta E} - \frac12 \right) = \frac1{{\overline\dim}_\theta f(E)} - \frac12 \qquad \forall \, 0<\theta \le 1.
\end{equation}
Our example is surprisingly elementary, e.g., it makes no use of the machinery of holomorphic motions or inf-harmonicity (as is present in the examples of Astala \cite{ast:2d-higher} and Fuhrer--Ransford--Younsi \cite{fry:inf-harmonic} for the Hausdorff dimension case). Moreover, in Theorem \ref{th:banaji-rutar-remark} we establish an intriguing relationship between the Banaji--Rutar lower bounds for intermediate dimension (Proposition \ref{prop:banaji-rutar}) and sharpness of Corollary \ref{cor:qc_dim_dist_intermediate} as seen in \eqref{eq:theta-qc-sharpness-introduction}. To wit, any compact, non-doubling and non-uniformly perfect subset of $\C$ which satisfies \eqref{eq:theta-qc-sharpness-introduction} for $\theta = 1$ (i.e., for box-counting dimension) and which is also sharp for the Banaji--Rutar bound \eqref{eq:BR} necessarily satisfies \eqref{eq:theta-qc-sharpness-introduction} for all $0<\theta \le 1$.

Finally, in Theorem \ref{thm:intermediate-sharpness}, we show that Theorem \ref{thm:sob_dim_dist_intermediate} is sharp in a similar strong sense: if $E \subset \R^n$ is compact and $p>n$, then there exists a continuous mapping $f \in W^{1,p}(\R^n:\R^n)$ so that 
$$
{\overline\dim}_\theta f(E) = \tau_p({\overline\dim}_\theta E) \qquad \forall \, 0<\theta\le 1.
$$
The proof is an adaptation of Kaufman's argument \cite{kau:sobolev} for box-counting dimension, reformulated in capacity-theoretic language. In particular, it is nonconstructive; the existence of such a map is established by constructing a suitable random family of $W^{1,p}$ maps, and showing that with positive probability some representative achieves the optimal estimate for a countable dense set of values $\theta_k \in (0,1)$. An appeal to the continuity of $\theta \mapsto {\overline\dim}_\theta E$ for $0<\theta \le 1$ completes the proof.

\subsection{Outline of the paper}

We close this introduction with an outline of the paper. Section \ref{sec:background} recalls basic terminology and notation. In section \ref{sec:intermediate} we review intermediate dimension along with some basic tools for its study. Section \ref{sec:Sobolev-distortion-intermediate} contains the proof of Theorem \ref{thm:sob_dim_dist_intermediate}, and comments on the relationship between Sobolev distortion results for intermediate dimension and quasiconformal distortion results for the Assouad spectrum. In section \ref{sec:QC-classification} we collect applications of H\"older and Sobolev distortion of intermediate dimension to the quasiconformal classification problem. In particular, we prove Theorems \ref{thm:application1} and \ref{thm:application2}. In Section \ref{sec:qc-lowering}, we turn to the question of lowering intermediate dimension by quasisymmetric mappings. We prove Theorems \ref{th:main-1} and \ref{th:applic-1} and present a diverse collection of examples of sets and spaces with vanishing conformal Assouad spectrum. Finally, in section \ref{sec:sharpness-section} we return to the topic of Sobolev and quasiconformal distortion and we prove the sharpness of \eqref{eq:intermediate-quasiconformal-2d} and \eqref{eq:intermediate-Sobolev} as discussed above. A short concluding section \ref{sec:open-questions} collects several open questions motivated by our results.

\subsection*{Acknowledgements}

We thank Roope Anttila, Amlan Banaji, Efstathios Chrontsios Garitsis, and Alex Rutar for helpful conversations regarding the topics of this paper. We are also grateful to Istv\'an Kolossv\'ary for providing us with access to unpublished computational data related to Example \ref{ex:BMcarpets-after-BK}.  We thank an anonymous referee for carefully reading our paper and for making several helpful suggestions. 

\section{Background and notation}\label{sec:background}

Multiplicative constants which arise in various estimates throughout this paper will be denoted by $C$ or $c$, occasionally decorated with subscripts to denote dependence on parameters. All such constants are always nonnegative. The value of such a constant may change from one line to the next, or even within a single line. When appropriate, we explicitly identify the dependence of such constants on auxiliary data. We use the expression $C$ when we wish to emphasize that a particular constant is {\it finite}, and we use $c$ when we wish to emphasize that the constant is {\it nonzero}. The notation $A \lesssim B$ will also be used; this expression is synonymous with the inequality $A \le C B$ for a constant $C$ as above. Similarly, $A \gtrsim B$ is synonymous with $A \ge c B$.

\subsection{Dyadic decomposition}\label{subsec:dyadic-and-Sobolev}

The standard dyadic decomposition of $\R^n$ is denoted $\cD = \cD(\R^n)$. For each $m \in \Z$, we let $\cD_m = \cD_m(\R^n)$ denote the collection of all cubes $Q \in \cD$ whose side length is equal to $2^{-m}$. Thus $\cD_m$ consists of all cubes of the form $[p_1 2^{-m},(p_1+1)2^{-m}] \times \cdots \times  [p_n 2^{-m},(p_n+1)2^{-m}]$ for integers $p_1,\ldots,p_n$. For a domain $\Omega \subset \R^n$ we denote by $\cD(\Omega)$ and $\cD_m(\Omega)$ the subsets of $\cD$ and $\cD_m$ respectively, consisting of cubes which are contained in $\Omega$. It is well known that Hausdorff dimension can be computed as the critical threshold for {\it dyadic Hausdorff measure}, defined as for the usual Hausdorff measure but using coverings by dyadic cubes. Similarly, box-counting dimension can be computed by restricting attention to coverings by dyadic cubes. In section \ref{subsec:dyadic-intermediate} we will establish a similar fact for intermediate dimension.

For an integer $M \ge 2$ we denote by $\cD(M)$ and $\cD_m(M)$ the corresponding $M$-adic decomposition into $M$-adic cubes of the form  $[p_1 M^{-m},(p_1+1)M^{-m}] \times \cdots \times  [p_n M^{-m},(p_n+1)M^{-m}]$ for integers $p_1,\ldots,p_n$. Thus, for example, $\cD = \cD(2)$. We will make use of the $M$-adic decomposition in the discussion of fractal percolation in section \ref{subsec:percolation}.

\subsection{Sobolev and quasiconformal maps}\label{subsec:quasiconformal}

A map $f=(f_1,\ldots,f_N):\Omega \to \R^N$, where $\Omega$ is a domain in $\R^n$, lies in the Sobolev space $W^{1,p}(\Omega:\R^N)$ if each component $f_j \in W^{1,p}(\Omega)$. Let $f$ be the continuous representative of a map in $W^{1,p}(\Omega:\R^N)$ for $p>n$. Then $f$ satisfies the {\bf Morrey--Sobolev inequality}: there exists $C = C(n,p) > 0$ so that the inequality
\begin{equation}\label{eq:MS}
|f(x)-f(y)| \le C |x-y|^{1-n/p} \left( \int_Q |Df|^p \right)^{1/p}
\end{equation}
holds for all $x,y \in Q$, whenever $Q$ is a closed cube contained in $\Omega$.

A homeomorphism $f:\Omega \to \Omega'$ between domains in $\R^n$, $n \ge 2$, is {\it $K$-quasiconformal ($K$-QC)} for some $K \ge 1$ if $f$ lies in the local Sobolev space $W^{1,n}_\loc(\Omega:\R^n)$ and the inequality $||Df(x)||^n \le K \det Df(x)$ holds for a.e.\ $x \in \Omega$. The inverse of a $K$-QC map is $K^{n-1}$-QC. As explained in the introduction, the {\it Sobolev higher integrability exponent}
$$
p^\Sob_n(K) = \sup \{ p>n: \mbox{every $K$-QC map $f$ between domains in $\R^n$ lies in $W^{1,p}_\loc$} \}
$$
exists for each $K \ge 1$ and $n \ge 2$. The radial stretch map $f(x) = |x|^{1/K-1}x$ shows that $p^\Sob_n(K)$ necessarily satisfies $p^\Sob_n(K) \le \tfrac{nK}{K-1}$ for all $n$ and $K$. Astala \cite{ast:2d-higher} proved that $p^\Sob_2(K) = \tfrac{2K}{K-1}$, but the sharp value of $p^\Sob_n(K)$ is unknown when $n \ge 3$. Iwaniec and Martin \cite{im:higher} established the existence, for each $n \ge 3$, of a constant $\lambda(n) \ge 1$ so that
\begin{equation}\label{eq:im}
p^\Sob_n(K) \ge \frac{n \lambda(n) K}{\lambda(n) K - 1}.
\end{equation}

A homeomorphism $f:\R^n \to \R^n$, $n \ge 2$, is quasiconformal if and only if it is quasisymmetric. If $f:\Omega \to \Omega'$ is a homeomorphism between domains in $\R^n$, then $f$ is quasiconformal if and only if it is locally quasisymmetric.

\subsection{Assouad dimension, lower dimension, and the Assouad spectrum}\label{subsec:metric}

In a metric space $(X,d)$ we denote by $B(x,r)$ the closed ball with center $x$ and radius $r$. A metric space $(X,d)$ is {\em doubling} if there exists a constant $C>0$ so that every ball $B(x,r)$ can be covered by at most $C$ balls of radius $r/2$. Moreover, $(X,d)$ is {\em uniformly perfect} if there exists $0<c<1$ so that for every ball $B(x,r)$ with $r<\diam(X)$, the set $B(x,r) \setminus B(x,cr)$ is nonempty. Observe, for instance, that every connected metric space is uniformly perfect with $c=\tfrac12$.

The {\em Assouad dimension} $\dim_A X$ is the infimum of those values $s>0$ so that there exists $C>0$ so that for every ball $B(x,R)$ in $X$ and every $0<r<R$, $B(x,R)$ can be covered by no more than $C(R/r)^s$ balls of radius $r$. The {\em lower dimension} $\dim_L X$ is the supremum of those values $t>0$ so that there exists $C>0$ so that for every ball $B(x,R)$ in $X$ and every $0<r<R$, it requires at least $C(R/r)^t$ balls of radius $r$ to cover $B(x,R)$.

It is known that $\dim_A X < \infty$ if and only if $X$ is doubling. Similarly, $\dim_L X > 0$ if and only if $X$ is uniformly perfect. See \cite[Section 13.1.1]{fr:book}. Moreover, $\dim_L X \le \dim_H X$ for all compact $X$ and $\overline\dim_B X \le \dim_A X$ for all totally bounded $X$.

According to a result of Heinonen \cite[Theorem 14.16]{H}, if $(X,d)$ is uniformly perfect, then the conformal Assouad dimension $C\dim_A X$ agrees with the Ahlfors regular conformal dimension $ARC\dim X$, defined as the infimum of those exponents $Q$ for which there exists a quasisymmetric homeomorphism from $X$ to an Ahlfors $Q$-regular metric space $(Y,d')$.

The {\it Assouad spectrum} was introduced by the first named author and Yu in \cite{fy:assouad-spectrum} and here we define the (related) upper Assouad spectrum, but we refer to it throughout as simply the Assouad spectrum. For $0<\theta<1$, $\dim_A^\theta X$ denotes the infimum of those values $s>0$ so that there exists a constant $C>0$ so that for every $x \in X$ and all $0<r\le R^{1/\theta} \le R \le 1$, the inequality
$$
N(B(x,R),r) \le C(R/r)^s
$$
holds true. The function $\theta \mapsto \dim_A^\theta X$ is nondecreasing, with $\lim_{\theta\to 0} \dim_A^\theta X = \overline\dim_B X$. The {\em quasi-Assouad dimension} is $\dim_{qA} X = \lim_{\theta \to 1} \dim_A^\theta X$. The quasi-Assouad dimension was introduced, expressed in different language, by L\"u  and Xi \cite{quasiassouad}. For future reference, we record the inequality
\begin{equation}\label{eq:assouad-spectrum-inequality}
\dim_A^\theta X \le \min \left\{ \frac{\overline\dim_B X}{1-\theta}, \dim_{qA} X \right\},
\end{equation}
see e.g.\ \cite[Lemma 3.4.4]{fr:book}.

\subsection{Bedford--McMullen carpets}\label{subsec:bm-carpets}

Fix integers $n>m \ge 2$ and a collection $\cF$ of contractive affine maps $f_{(i,j)}$ of $\R^2$, where $f_{(i,j)}(x,y) = (m^{-1}(x+i-1),n^{-1}(y+j-1))$ and $(i,j)$ ranges over an index set $\cA \subset \{1,\ldots,m\} \times \{1,\ldots,n\}$. Associated to the iterated function system $\cF = \{f_{(i,j)}:(i,j) \in \cA\}$ is a unique nonempty compact set $E = E_\cF$, termed a {\it Bedford--McMullen carpet}. 

Let $N = \# \cA$, $M = \#\{i:\exists\,j \mbox{ s.t.\ }(i,j) \in \cA\}$, $N_i = \#\{j:(i,j) \in \cA\}$, and $\gamma = \log_m(n) > 1$. Bedford \cite{bed:carpets} and McMullen \cite{mcm:carpets} computed the Hausdorff and box-counting dimensions of $E$, obtaining the formulas 
\begin{equation}\label{eq:BM-dim-H}
\dim_H E = \log_m\bigg (\sum_{i=1}^M N_i^{1/\gamma}\bigg)
\end{equation}
and
\begin{equation}\label{eq:BM-dim-B}
{\overline\dim}_B E = \log_m(M) + \log_n(N/M).
\end{equation}
Assouad dimensions of Bedford--McMullen carpets were computed by Mackay \cite{Mackay}, who showed that
\begin{equation}\label{eq:BM-dim-A}
\dim_A E =  \log_m(M) + \max_i \log_n(N_i).
\end{equation}
Finally, lower dimensions of these carpets were computed by the first named author \cite{fr:homogeneity-fractals} as follows:
\begin{equation}\label{eq:BM-dim-L}
\dim_L E = \log_m(M) + \min_i \log_n(N_i).
\end{equation}
Since $\dim_L E > 0$, all Bedford--McMullen carpets are uniformly perfect.

The Assouad spectra of Bedford--McMullen carpets were computed by the first named author and Yu, see also \cite[Theorem 8.3.3]{fr:book}. One has
\begin{equation}\label{eq:BM-dim-A-theta}
\dim_A^\theta E = \frac{\overline\dim_B E - \theta \left( \log_m(N/\max_i N_i) + \log_n(\max_i N_i) \right)}{1-\theta}, \qquad 0<\theta <\gamma^{-1}
\end{equation}
and $\dim_A^\theta E = \dim_A E$ for $\gamma^{-1} \le \theta < 1$.

The intermediate dimensions of Bedford--McMullen carpets have a substantially more intricate behavior. These values have recently been computed by Banaji and Kolossv\'ary \cite{bk:intermediate-BM-carpets},
who show that  the value of ${\overline\dim}_\theta E $, for $\gamma^{-L} < \theta \le \gamma^{1-L}$ and $L \in \N$, is the unique solution $s = s(\theta)$ to the equation
$$
\gamma^L\theta \log N - (\gamma^L \theta - 1) t_L(s) + \gamma(1-\gamma^{L-1}\theta) (\log M - I(t_L(s))) - s \log n = 0.
$$
Here $t_\ell(s) := T_s^{(\ell-1)}((s-\log_m(M)) \log n)$, $T_s(t) := (s-\log_m(M)) \log n + \gamma \, I(t)$, and $I(t)$ is the Legendre transform of the function $\lambda \mapsto \log(M^{-1} \sum_{i=1}^M N_i^\lambda)$. 

The function $\theta \mapsto {\overline\dim}_\theta E$ so obtained showcases novel features not previously observed for intermediate dimension functions in natural cases. In fact, it is piecewise analytic and piecewise strictly concave with the `pieces' precisely given by the intervals of the form $(\gamma^{-L},\gamma^{1-L})$, $L \in \N$, has discontinuous first derivative at each $\theta_L := \gamma^{-L}$, and has infinite right derivative at $\theta = 0$.\footnote{More precisely, there exists $C<\infty$ such that for sufficiently small $\theta$, one has $\dim_H E + C^{-1} (\log \theta)^{-2} \le {\overline\dim}_\theta E \le \dim_H E + C (\log \theta)^{-2}$.}

\subsection{Mandelbrot percolation}\label{subsec:percolation}

Fractal percolation, a.k.a.\ Mandelbrot percolation, is one of the most well-known examples of a stochastic fractal. It has been extensively studied since the early works of Mandelbrot in the 1970s \cite{man:turbulence}. The survey by Rams and Simon \cite{RS:survey} provides a clear overview of the subject. We review known results on the dimensions of samples of Mandelbrot percolation. Let $Q = [0,1]^n$ denote the unit cube, and fix an integer $M \ge 2$ and a probability $p \in (0,1)$. Consider the collection of level one $M$-adic cubes $R$ in $\cD_1(M)$ such that $R \subset Q$. We keep each such cube independently with probability $p$. For each cube $R \in \cD_1(M)$ which survives, we repeat the process, considering the collection of level two $M$-adic cubes $S$ in $\cD_2(M)$ such that $S \subset R$ and keeping each such cube independently with probability $p$. Iterating over all levels $m \in \N$, we define the {\it fractal percolation} as the random set
$$
F := \bigcap_{m \in \N} \bigcup_{R} R,
$$
where the union is taken over all cubes $R \in \cD_m(M)$ which survive the above process. If $p \le M^{-n}$ then the random set $F$ is empty almost surely. If $p > M^{-n}$ then there is a positive probability that $F$ is non-empty and we have almost sure formulas for the dimensions of $F$, conditioned on non-extinction. Specifically,
$$
\dim_H F = \overline\dim_\theta F = \overline\dim_B F = \dim_A^\theta F = n - \frac{\log(1/p)}{\log(M)} \qquad \forall \, 0<\theta<1
$$
almost surely, whilst
$$
\dim_A F = n
$$
almost surely. See \cite{FMT:random} and \cite[section 9.4]{fr:book} for details.

\section{Intermediate dimension}\label{sec:intermediate}

The intermediate dimensions were introduced in \cite{ffk:intermediate} for subsets of Euclidean space. We follow Banaji \cite{ban:generalized-intermediate} for the theory for metric spaces, but discuss the general problem of defining the intermediate dimensions in arbitrary metric spaces in subsection \ref{sec:intmetric} below.

Let $X$ be a totally bounded metric space, and fix an isometric embedding $X \hookrightarrow Z$ into a uniformly perfect metric space $Z$. For instance, since $X$ is separable we may embed $X$ into the Banach space $\ell^\infty$ via the Fr\'echet embedding \cite[Exercise 12.6]{H}.

Let $0<\theta < 1$. The $\theta$-intermediate dimension of $X$ is defined with respect to the aforementioned embedding, see however Proposition \ref{prop:independence} below. Following \cite{ffk:intermediate} and \cite{ban:generalized-intermediate}, we define the {\it (upper) $\theta$-intermediate dimension} of $X$, denoted ${\overline{\dim}_\theta} X$, to be the infimum of those values $s>0$ such that for each $\eps>0$ there exists $\delta_0>0$ so that for each $0<\delta \le \delta_0$ there is a covering $(A_i)$ of $X$ by subsets of $Z$ satisfying
\begin{equation}\label{eq:theta-dim-1}
\delta^{1/\theta} \le \diam(A_i) \le \delta \qquad \forall \, i
\end{equation}
and
\begin{equation}\label{eq:theta-dim-2}
\sum_i (\diam A_i)^s < \eps.
\end{equation}
Note that such coverings exist for all $\delta\le\delta_0$, provided that the constant $\delta_0$ is chosen sufficiently small relative to $\theta$ and the uniform perfectness constant $c$ of $Z$. Indeed, assume that $\delta_0 \le (c/2)^{\theta/(1-\theta)}$. Then for any $z \in Z$ and any $0<r\le \tfrac12\delta_0$ the ball $B(z,r)$ in $Z$ satisfies the diameter constraints in \eqref{eq:theta-dim-1} for $\delta = 2r$. For the lower bound, we note that $\diam(B(z,r)) \ge cr \ge \delta^{1/\theta}$ by the choice of $\delta$ and the constraint on $\delta_0$.

We recall that the function $\theta \mapsto \overline\dim_\theta X$, $0<\theta<1$, is non-decreasing and continuous; see \cite[Theorem 3.12]{ban:generalized-intermediate} for these results in the metric space setting. Moreover, the value of $\overline\dim_\theta X$ for a given $\theta \in (0,1)$ is constrained by the values of the lower, box-counting, and Assouad dimensions of $X$. An early indication of this phenomenon appears in \cite[Proposition 2.4]{ffk:intermediate}; the version which we present here was proved first by Banaji and Rutar \cite[Corollary 2.8]{br:attainability} in the setting of subsets of Euclidean space. See Banaji \cite[Corollary 3.14]{ban:generalized-intermediate} for the metric space version.

\begin{prop}[Banaji--Rutar, Banaji]\label{prop:banaji-rutar}
Let $X$ be a totally bounded metric space with lower dimension $\dim_L X = \lambda$, box-counting dimension $\overline\dim_B X = \beta$, and Assouad dimension $\dim_A X = \alpha$, for some $0\le\lambda \le \beta \le \alpha < \infty$. For $0<\theta<1$ we have
\begin{equation}\label{eq:BR}
{\overline\dim}_\theta X \ge \frac{\alpha (\beta - \lambda) \theta + (\alpha - \beta) \lambda}{(\beta - \lambda) \theta + (\alpha-\beta)}.
\end{equation}
\end{prop}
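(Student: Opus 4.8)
The plan is to argue by contradiction directly from the covering definition of $\overline\dim_\theta$. Suppose $\overline\dim_\theta X<s$ for some $s$ strictly below the right-hand side of \eqref{eq:BR}. Then for every $\eps>0$ there is $\delta_0>0$ such that at every scale $\delta\le\delta_0$ there is a cover $(U_i)$ of $X$ with $\delta^{1/\theta}\le\diam U_i\le\delta$ for all $i$ and $\sum_i(\diam U_i)^s<\eps$. Writing $\diam U_i=\delta^{t_i}$ with $t_i\in[1,1/\theta]$, so that the sum to be estimated is $g(s):=\sum_i\delta^{st_i}$, I want to show that for a well-chosen scale $\delta$ one has $g(s)\ge c$ for a constant $c>0$ independent of $\eps$, contradicting the above once $\eps<c$. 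The tools are the three covering estimates packaged in the hypotheses: the box-counting assumption supplies scales $r_k\to0$ with $N(X,r_k)\gtrsim r_k^{-(\beta-\eps)}$; the Assouad assumption supplies the uniform upper bound $N(B(x,R),r)\lesssim(R/r)^{\alpha+\eps}$; and the lower-dimension assumption supplies the uniform lower bound $N(B(x,R),r)\gtrsim(R/r)^{\lambda-\eps}$, the last two valid for all balls $B(x,R)\subset X$ and all $0<r<R$.

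I would first choose the working scale so that the fine scale $\delta^{1/\theta}$ coincides with a box-counting scale $r_k$, i.e.\ take $\delta=r_k^\theta$. Then, on the one hand, $g(0)=\#\{i\}\ge N(X,\delta)$; on the other hand, subdividing each $U_i$ into sets of diameter $\delta^{1/\theta}$ by means of the Assouad bound gives $N(X,\delta^{1/\theta})\lesssim\sum_i(\diam U_i/\delta^{1/\theta})^{\alpha+\eps}$, which together with $N(X,\delta^{1/\theta})\gtrsim r_k^{-(\beta-\eps)}$ bounds $g(\alpha+\eps)$ from below; and applying the lower-dimension inequality inside balls of radius $\delta$ bounds from below the cost of covering such a ball by sets of diameter at least $\delta^{1/\theta}$, bringing $\lambda$ into the picture. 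Since $p\mapsto g(p)$ is log-convex and decreasing, and the exponents $t_i$ are confined to $[1,1/\theta]$, these three pieces of data constrain $g(s)$ from below; a short Lagrange-multiplier (or H\"older-inequality) computation, with the extremal configuration governed by the interplay of the two endpoint sizes $\delta$ and $\delta^{1/\theta}$, then produces a lower bound for $\overline\dim_\theta X$ of the same shape as the right-hand side of \eqref{eq:BR}. The degenerate cases should be removed at the start: when $\beta=\lambda$ the bound reduces to $\overline\dim_\theta X\ge\lambda$, which follows from the elementary inequality $\overline\dim_\theta X\ge\dim_L X$, and when $\beta=\alpha$ it reduces to $\overline\dim_\theta X\ge\beta$, which follows from the already-noted implication $\overline\dim_B X=\dim_A X\Rightarrow\overline\dim_\theta X=\overline\dim_B X$; so one may assume $\lambda<\beta<\alpha$.

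I expect the main obstacle to be making this optimisation sharp, and within it the correct use of the Assouad dimension. A single application of the Assouad bound at the one scale pair $(\delta,\delta^{1/\theta})$ is too lossy to reach the exact constant, precisely because $\dim_A X$ is a supremum over \emph{all} pairs of scales. The remedy is to exploit that a good cover is available not only at $\delta$ but at every scale, in particular along the geometric cascade $\delta,\delta^{1/\theta},\delta^{1/\theta^2},\dots$: refining across this cascade and combining with the box-counting lower bound at the chosen scale — with the base scale chosen so that the accumulated inequalities balance — is what should upgrade the crude estimate to the sharp weighted-mean form $\frac{\alpha(\beta-\lambda)\theta+(\alpha-\beta)\lambda}{(\beta-\lambda)\theta+(\alpha-\beta)}$ (a weighted mean of $\alpha$ and $\lambda$ with weights $(\beta-\lambda)\theta$ and $\alpha-\beta$). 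Organising that bookkeeping so that it collapses exactly to \eqref{eq:BR} is the delicate heart of the proof; everything else is routine manipulation of covering numbers.
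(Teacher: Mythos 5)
The paper does not prove Proposition \ref{prop:banaji-rutar}; it is stated as a citation to Banaji--Rutar \cite[Corollary 2.8]{br:attainability} and Banaji \cite[Corollary 3.14]{ban:generalized-intermediate}. So I am assessing your proposal on its own terms rather than against an in-paper argument.

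Your high-level plan is on the right track: argue by contradiction from the covering definition, extract a cover at a scale matched to a box-counting scale, refine via the Assouad bound, and balance contributions to squeeze out the weighted-mean form of the constant. Your handling of the degenerate cases $\beta=\lambda$ and $\beta=\alpha$ is also sound. But as written the proposal has a genuine gap at precisely the place you flag as ``the delicate heart of the proof.'' You never carry out the Lagrange/H\"older optimisation, and the remedy you offer for sharpness --- iterating over the cascade $\delta,\delta^{1/\theta},\delta^{1/\theta^2},\dots$ --- points in the wrong direction. That cascade marches to scale $0$, whereas the balance point that produces the constant $\frac{\alpha(\beta-\lambda)\theta+(\alpha-\beta)\lambda}{(\beta-\lambda)\theta+(\alpha-\beta)}$ is attained at a single intermediate scale $\rho=\delta^{\psi}$ with $\psi\in[1,1/\theta]$, i.e.\ $\rho$ lying \emph{inside} the window $[\delta^{1/\theta},\delta]$ already imposed by the cover, not below it. Concretely, one refines each $U_i$ with $\diam U_i>\rho$ into at most $C(\diam U_i/\rho)^{\alpha+\eps}$ pieces of diameter $\rho$, counts the sets with $\diam U_i\le\rho$ as single pieces, compares the total to $N(X,\rho)\gtrsim\rho^{-(\beta-\eps)}$ at a good scale, and then optimises over $\rho$; this is where the weighted-mean expression comes out. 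Iterating across the geometric cascade you describe does not recover that optimisation.

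The second, related soft spot is the role of $\dim_L X$. You say that ``applying the lower-dimension inequality inside balls of radius $\delta$ bounds from below the cost of covering such a ball by sets of diameter at least $\delta^{1/\theta}$,'' but the cover $(U_i)$ is a cover of all of $X$, not of a single ball, and it is not immediate how the local lower bound on covering numbers constrains a global diameter sum. In the extant proofs the lower dimension is used in a more structured way (essentially, it controls how much of the box-counting mass can hide in the small-diameter tail of the cover, sharpening the crude bound $\sum_{\diam U_i\le\rho}1\lesssim\eps'\delta^{-s/\theta}$), and the precise mechanism is exactly what distinguishes the weighted mean with endpoint $\lambda$ from the $\lambda=0$ bound $\frac{\alpha\beta\theta}{\beta\theta+\alpha-\beta}$ of Falconer--Fraser--Kempton. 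As it stands your proposal gestures at the correct ingredients but neither identifies the correct intermediate scale nor pins down where $\lambda$ actually does its work, so it is not yet a proof.
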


Recall from section \ref{subsec:metric} that a metric space $X$ has finite Assouad dimension if and only if it is doubling. Since the expression on the right hand side of \eqref{eq:BR} is decreasing as a function of $\alpha$ and increasing as a function of $\lambda$, we obtain a weaker conclusion if we replace $\lambda$ by $0$ and $\alpha$ by $+\infty$.  To wit, if $X$ is a totally bounded and doubling metric space, then
\begin{equation}\label{eq:BR-cor}
{\overline\dim}_\theta X \ge\theta \, \cdot \, \overline\dim_B X
\end{equation}
for each $0<\theta<1$.


\subsection{Dyadic intermediate dimension}\label{subsec:dyadic-intermediate}

In preparation for our proof of the Sobolev distortion estimate \eqref{eq:intermediate-Sobolev}, we now show that restricting to coverings by dyadic cubes does not change the value of the intermediate dimensions. Given $E \subset \R^n$ bounded and $0<\theta\le 1$, let
$$
\overline{\dim}_{\theta,\cD}(E)
$$
be the infimum of those values $s>0$ such that for each $\eps>0$ there exists $\delta_0>0$ so that for each $0<\delta \le \delta_0$ there is a covering $(Q_i)$ of $E$ by dyadic cubes satisfying
\begin{equation}\label{eq:int-dim-1}
\delta^{1/\theta} \le \diam(Q_i) \le \delta \qquad \forall \, i
\end{equation}
and
\begin{equation}\label{eq:int-dim-2}
\sum_i (\diam Q_i)^s < \eps.
\end{equation}
Note that since cubes in $\cD_m$ have diameter $\sqrt{n} \, 2^{-m}$, all of the cubes $Q_i$ involved in the above covering (and satisfying \eqref{eq:int-dim-1}) necessarily lie in $\cD_m$ for some $m$ satisfying
\begin{equation}\label{eq:dyadic-cube-estimate}
\log_2(\tfrac1\delta) \le m -  \tfrac12 \log_2(n) \le \tfrac1\theta \log_2(\tfrac1\delta).
\end{equation}

\begin{prop}\label{prop:intermediate-dimension-via-dyadic-cubes}
For each bounded set $E \subset \R^n$ and each $0<\theta<1$,
$$
\overline{\dim}_\theta(E) = \overline{\dim}_{\theta,\cD}(E).
$$
\end{prop}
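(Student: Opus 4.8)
The plan is to prove the two inequalities separately. The bound $\overline{\dim}_\theta(E)\le\overline{\dim}_{\theta,\cD}(E)$ is immediate: any covering of $E$ by dyadic cubes satisfying \eqref{eq:int-dim-1} and \eqref{eq:int-dim-2} is, in particular, a covering of $E$ by sets satisfying \eqref{eq:theta-dim-1} and \eqref{eq:theta-dim-2}, hence is admissible in the definition of $\overline{\dim}_\theta(E)$.

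For the reverse inequality $\overline{\dim}_{\theta,\cD}(E)\le\overline{\dim}_\theta(E)$, fix $s>\overline{\dim}_\theta(E)$ and $\eps>0$. The strategy is to take a general covering $(A_i)$ of $E$ which is admissible at a small scale $\delta$ and satisfies $\sum_i(\diam A_i)^s$ small, and to replace each $A_i$ by a bounded number of dyadic cubes of comparable diameter, \emph{keeping the same scale} $\delta$. The only subtlety is that the replacement cubes must still have diameters lying in the window $[\delta^{1/\theta},\delta]$. This is where the hypothesis $\theta<1$ is used: writing $d:=\diam A_i\in[\delta^{1/\theta},\delta]$, for all $\delta$ below a threshold depending only on $\theta$ (so that $\delta\ge 4\delta^{1/\theta}$) the interval $\bigl[\max(\delta^{1/\theta},d/2),\,\min(\delta,2d)\bigr]$ has endpoint ratio at least $2$, hence contains the diameter $D=\sqrt n\,2^{-m}$ of some generation $\cD_m$ of dyadic cubes. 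By construction $\delta^{1/\theta}\le D\le\delta$ and $d/2\le D\le 2d$; since a set of diameter $d\le 2D$ meets at most $C_n$ cubes of $\cD_m$, we cover $A_i$ by at most $C_n$ dyadic cubes, each of diameter $D\in[\delta^{1/\theta},\delta]$, where $C_n$ depends only on $n$.

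Doing this for every $i$ yields a covering $(Q_j)$ of $E$ by dyadic cubes with $\delta^{1/\theta}\le\diam Q_j\le\delta$ for all $j$ and
$$
\sum_j(\diam Q_j)^s\;\le\;\sum_i C_n\,(2\diam A_i)^s\;=\;C_n2^s\sum_i(\diam A_i)^s.
$$
Thus, fixing at the outset the general covering so that $\sum_i(\diam A_i)^s<\eps/(C_n2^s)$, and shrinking $\delta_0$ to also enforce the ratio condition above, we obtain for every $0<\delta\le\delta_0$ an admissible dyadic covering of $E$ with $s$-diameter sum less than $\eps$. Hence $\overline{\dim}_{\theta,\cD}(E)\le s$, and letting $s\searrow\overline{\dim}_\theta(E)$ gives the proposition.

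I expect the single genuine obstacle to be the verification that the rounded dyadic diameters remain inside $[\delta^{1/\theta},\delta]$: this forces the adaptive choice above — rounding $d$ \emph{up} toward $\delta^{1/\theta}$ when $d$ is close to the lower threshold and \emph{down} otherwise — and it breaks down precisely when $\theta=1$, where the window degenerates to a single scale and the statement instead reduces to the classical fact for box-counting dimension. The remaining ingredients (the uniform bound $C_n$ on the number of dyadic cubes needed, and the elementary diameter-sum estimate) are entirely routine.
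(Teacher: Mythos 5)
Your proof is correct, and it takes a genuinely different — and arguably cleaner — route than the paper's. The paper replaces each $A_i$ by $O(1)$ dyadic cubes from the generation $\cD_{m_i}$ with $\sqrt n\,2^{-m_i}<\diam A_i\le\sqrt n\,2^{-m_i+1}$; these cubes have diameter $\ge\tfrac12\diam A_i$, which may drop just below $\delta^{1/\theta}$. The paper absorbs this slack by showing the resulting dyadic covering is admissible for a slightly \emph{smaller} parameter $\theta'=\theta/(1+\tfrac{\log 2}{-\log\delta_0}\theta)<\theta$, so that $\overline{\dim}_{\theta',\cD}(E)<s$, and then lets $\delta_0\to0$ (hence $\theta'\to\theta$) and invokes continuity of $\theta\mapsto\overline{\dim}_\theta$. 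Your approach instead keeps $\theta$ fixed by choosing, for each $A_i$ with $d=\diam A_i$, the dyadic generation adaptively from the window $\bigl[\max(\delta^{1/\theta},d/2),\,\min(\delta,2d)\bigr]$: once $\delta$ is small enough that $\delta\ge4\delta^{1/\theta}$ (here the hypothesis $\theta<1$ enters), this window has endpoint ratio $\ge2$ and hence contains a dyadic diameter $\sqrt n\,2^{-m}$, and since that diameter is $\ge d/2$ at most $C_n$ cubes of $\cD_m$ are needed. The payoff is that your dyadic covering is admissible for the original $\theta$, so you need neither the perturbed exponent $\theta'$ nor the continuity theorem for intermediate dimensions — making the argument more self-contained (it could, for instance, serve as an ingredient in a proof of that continuity rather than depending on it). Both proofs exploit the same elementary geometric facts (a set of diameter $\le2D$ meets $O_n(1)$ dyadic cubes of that size, and diameter sums only grow by a bounded factor), and both degenerate at $\theta=1$ in the expected way.
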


\begin{proof}
The inequality
$$
\overline{\dim}_\theta(E) \le \overline{\dim}_{\theta,\cD}(E)
$$
is clear, so we focus on proving the reverse inequality.

Fix $s>\overline{\dim}_\theta(E)$. For each $\eps>0$ there exists $\delta_0=\delta_0(\eps)>0$ so that for each $\delta\le \delta_0$ there is a covering $(A_i)$ of $E$ satisfying \eqref{eq:int-dim-1} and \eqref{eq:int-dim-2}. We will find a value $\theta' = \theta'(\theta,\delta_0) < \theta$ so that
$$
\overline{\dim}_{\theta',\cD}(E) < s.
$$
Moreover, $\theta'(\theta,\delta_0) \to \theta$ as $\delta_0 = \delta_0(\eps) \to 0$. In view of the continuity of $\theta \mapsto \overline{\dim}_\theta(E)$, this suffices to complete the proof.

For each index $i$, let $m_i\ge 1$ be the unique integer satisfying
$$
\sqrt{n} 2^{-m_i} < \diam(A_i) \le \sqrt{n} 2^{-m_i + 1}.
$$

\begin{lem}
For each $i$, the set $A_i$ can be covered by at most $P(n)$ elements of $\cD_{m_i}$, where $P(n)$ is an explicit constant depending only on $n$.
\end{lem}

For the proof of the lemma, fix an integer $p > 2\sqrt{n}$. Next, choose one cube $Q_0 \in \cD_{m_i}$ so that $Q_0 \cap A_i \ne \emptyset$, and consider the collection $\cC$ of all cubes $Q \in \cD_{m_i}$ so that there exists a chain of $p+1$ elements of $\cD_{m_i}$,
$$
{Q_0},{Q_1},\ldots,{Q_p}={Q},
$$
so that $\overline{Q_\ell} \cap \overline{Q_{\ell-1}} \ne \emptyset$ for each $\ell=1,\ldots,p$. The size of $\cC$ is $P(n) := (2p+1)^n$. We claim that the collection $\cC$ of cubes covers $A_i$. If not, then there exist points $x \in Q_0 \cap A_i$ and $y \in A_i \setminus \cup\{Q : Q \in \cC \}$ and we obtain the contradiction
$$
2\sqrt{n} 2^{-m_i} < p 2^{-m_i} < |x-y| \le \diam(A_i) \le \sqrt{n} 2^{-m_i+1}.
$$

Returning to the proof of the proposition, we now index, for each $i$, the elements of the covering collection so obtained as $(Q_{ij})$. For each $i$ and $j$ we have
$$
\diam(Q_{ij}) = \sqrt{n} 2^{-m_i} \le \diam(A_i) \le \delta
$$
and
$$
\diam(Q_{ij}) \ge \tfrac12 \diam(A_i) \ge \tfrac12 \delta^{1/\theta} \ge \delta^{1/\theta'}
$$
where
\begin{equation}\label{eq:theta-prime}
\theta' := \frac{\theta}{1 + \tfrac{\log 2}{-\log \delta_0} \theta}.
\end{equation}
Indeed, \eqref{eq:theta-prime} is equivalent to $\delta_0^{1/\theta'-1/\theta} = \tfrac12$ which implies that $\delta^{1/\theta'-1/\theta} \le \tfrac12$ for any $\delta \le \delta_0$, since $\theta'<\theta$. We also note that $\theta' \to \theta$ as $\delta_0 \to 0$.

The collection $(Q_{ij})$ for all relevant choices of $i$ and $j$ is therefore an admissible covering for the $\theta'$-intermediate dimension of $E$ computed with respect to dyadic cubes. Finally, we note that
$$
\sum_{i,j} (\diam Q_{ij})^s \le P(n) \sum_i (\diam A_i)^s < P(n) \eps =: \eps'
$$
and $\eps'\to0$ as $\eps \to 0$. We conclude that
$$
\overline{\dim}_{\theta',\cD}(E) < s,
$$
which, as explained above, suffices to complete the proof of Proposition \ref{prop:intermediate-dimension-via-dyadic-cubes}.
\end{proof}

\subsection{A capacitary approach to intermediate dimension}\label{subsec:intermediate-capacity}

In \cite{bff:intermediate-projections}, motivated by the study of Marstrand-type projection theorems, a capacitary formulation for intermediate dimensions was introduced. This builds on earlier work of Falconer \cite{fal:capacity-for-box-dimension} in the box-counting dimension case. For box-counting and intermediate dimension, the almost sure value of the dimensions of $m$-dimensional projections of a subset $E \subset \R^n$ can be characterised via the associated {\it $m$-dimensional dimension profile}. In this paper, we only use this concept in the special case $m=n$, in which case the dimension profile agrees with the original notion of dimension.

For a bounded set $E \subset \R^n$ and for $0\le s \le n$ and $0<\theta \le 1$, set
\begin{equation}\label{eq:S-s-r-theta}
S_{r,\theta}^s(E) := \inf \{ \sum_i (\diam A_i)^s \, : \, E \subset \bigcup_i A_i, r \le \diam A_i \le r^\theta \, \forall \, i \}.
\end{equation}
Following \cite[Lemma 2.1]{bff:intermediate-projections} we note that ${\overline\dim}_\theta \, E = s_0$
where $s = s_0$ is the unique nonnegative value such that
$$
\limsup_{r \to 0} \frac{\log S^s_{r,\theta}(E)}{\log(1/r)} = 0.
$$

\begin{defn}
For $s>0$, $m \in \{1,\ldots,m\}$, $r>0$, and $0<\theta \le 1$, introduce the kernel $\phi_{r,\theta}^{s,m}(u) := \min \{ 1, (\tfrac{r}{u})^s, \tfrac{r^{\theta(m-s)+s}}{u^m} \}$. The {\it $\phi_{r,\theta}^{s,m}$-energy} of a Radon measure $\mu$ is
\begin{equation}\label{eq:Ers}
\cE_{r,\theta}^{s,m}(\mu) := \iint \phi_{r,\theta}^{s,m}(|x-y|) \, d\mu x \, d\mu y,
\end{equation}
and the {\it $\phi_{r,\theta}^{s,m}$-capacity} of a set $E \subset \R^n$ is
\begin{equation}\label{eq:Crs}
C_{r,\theta}^{s,m}(E) := \left( \inf \{ \cE_{r,\theta}^{s,m}(\mu) \, : \, \mu \in \cM(E) \} \right)^{-1}.
\end{equation}
Finally, denote by ${\overline\dim}_\theta^m \, E = s_0'$ the unique nonnegative value $s=s_0'$ such that
$$
\limsup_{r\to 0} \frac{\log C_{r,\theta}^{s,m}(E)}{\log(1/r)} = s,
$$
and call ${\overline\dim}_\theta^m \, E$ the {\it $s$-box dimension profile} of $E$.
\end{defn}

We collect some basic observations in the following proposition. Details can be found in \cite[Lemma 3.1, Proposition 4.2, and Theorem 4.1]{bff:intermediate-projections}. Item (iii) highlights the connection between the capacities $C_{r,\theta}^{s,m} (E)$ and the covering sums $S^s_{r,\theta}(E)$.

\begin{prop}\label{prop:box-profile}
Let $E \subset \R^n$ be a nonempty and bounded set. 
\begin{itemize}
\item[(i)] For each $s>0$ and $r>0$, the infimum in \eqref{eq:Crs} is achieved by some measure $\mu_0$ in $\cM(E)$.
\item[(ii)] For $\mu_0$-a.e. $x \in E$, $\int \phi_r^s(|x-y|) \, d\mu_0(y) = C_r^s(E)^{-1}$.
\item[(iii)] There exists $r_0>0$ and a constant $C(n)>0$ so that
$$
r^s C^{s,n}_{r,\theta}(E) \le S^s_{r,\theta}(E) \le C(n) (\log(\tfrac{\diam E}{r})) \, r^s C^{s,n}_{r,\theta}(E) \qquad \forall\, 0<r\le r_0.
$$
\item[(iv)] ${\overline\dim}_\theta(E) = {\overline\dim}_\theta^n(E)$.
\end{itemize}
\end{prop}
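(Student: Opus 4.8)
The plan is to follow the classical potential-theoretic template. Items (i) and (ii) are the standard existence and variational characterization of an equilibrium measure; item (iii) is the quantitative comparison between covering sums and capacities — the analogue for intermediate dimension of Falconer's box-counting estimate in \cite{fal:capacity-for-box-dimension}; and (iv) follows formally from (iii). All four assertions are established in \cite{bff:intermediate-projections}; what follows is how I would organize the argument. Throughout I write $\phi_{r,\theta}^{s,n}$, $\cE_{r,\theta}^{s,n}$, $C_{r,\theta}^{s,n}$ for the objects denoted $\phi_r^s$, $\cE_r^s$, $C_r^s$ in the statement of (ii), and it is harmless to assume $E$ compact: the covering sum $S^s_{r,\theta}$ is unchanged under taking closures, and, because $\phi_{r,\theta}^{s,n}$ is Lipschitz away from $0$ and constant near $0$, so is the capacity $C_{r,\theta}^{s,n}$, so the general bounded case reduces to the compact one.

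For (i), observe that the kernel $\phi_{r,\theta}^{s,n}$ is bounded by $1$ and, once one sets $\phi_{r,\theta}^{s,n}(0)=1$, continuous on $[0,\infty)$, since the two blow-up terms $(r/u)^s$ and $r^{\theta(n-s)+s}u^{-n}$ exceed $1$ for small $u$. Hence $(x,y)\mapsto\phi_{r,\theta}^{s,n}(|x-y|)$ is bounded and continuous on $E\times E$, so $\mu\mapsto\cE_{r,\theta}^{s,n}(\mu)$ is continuous for the weak-$*$ topology; as $\cM(E)$ is weak-$*$ compact, the infimum in \eqref{eq:Crs} is attained by some $\mu_0$. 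For (ii), I would run the first-variation argument: for $\nu\in\cM(E)$ put $\mu_t:=(1-t)\mu_0+t\nu$ and note $\tfrac{d}{dt}\big|_{t=0^+}\cE_{r,\theta}^{s,n}(\mu_t)=2\big(\int U_0\,d\nu-\cE_{r,\theta}^{s,n}(\mu_0)\big)\ge 0$, where $U_0(x):=\int\phi_{r,\theta}^{s,n}(|x-y|)\,d\mu_0(y)$ is the equilibrium potential. Taking $\nu=\delta_x$ gives $U_0\ge C_{r,\theta}^{s,n}(E)^{-1}$ on $E$; integrating against $\mu_0$ and using $\int U_0\,d\mu_0=\cE_{r,\theta}^{s,n}(\mu_0)$ forces equality $\mu_0$-a.e., which is (ii). Since $U_0$ is continuous (dominated convergence), this upgrades to $U_0\equiv C_{r,\theta}^{s,n}(E)^{-1}$ on $\spt\mu_0$.

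The lower bound in (iii) rests on the pointwise estimate $\phi_{r,\theta}^{s,n}(|x-y|)\ge (r/\diam A)^s$ whenever $x,y$ lie in a common set $A$ with $r\le\diam A\le r^\theta$; this holds because $\phi_{r,\theta}^{s,n}$ is nonincreasing and equals $(r/u)^s$ on $[r,r^\theta]$ when $0\le s\le n$ (a one-line comparison of the three terms). Given any admissible covering $\{A_i\}$ for $S_{r,\theta}^s(E)$, disjointify it to a partition $E=\bigsqcup_i E_i$ with $E_i\subset A_i$, and apply Cauchy--Schwarz to $1=\sum_i\mu(E_i)$ with the splitting $\mu(E_i)=\bigl((\diam A_i)^s/r^s\bigr)^{1/2}\bigl((r/\diam A_i)^{s/2}\mu(E_i)\bigr)$:
\[
1\;\le\;\Big(r^{-s}\sum_i(\diam A_i)^s\Big)\Big(\sum_i(r/\diam A_i)^s\,\mu(E_i)^2\Big)\;\le\;r^{-s}\Big(\sum_i(\diam A_i)^s\Big)\,\cE_{r,\theta}^{s,n}(\mu),
\]
valid for every $\mu\in\cM(E)$, where the last step uses disjointness of the $E_i$ and the pointwise kernel bound. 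Taking the infimum over admissible coverings and then over $\mu$ gives $\cE_{r,\theta}^{s,n}(\mu_0)\ge r^s/S_{r,\theta}^s(E)$, i.e.\ $r^sC_{r,\theta}^{s,n}(E)\le S_{r,\theta}^s(E)$.

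The upper bound in (iii) is the substantive step, and the one I expect to be the main obstacle, since $\mu_0$ must now be used to \emph{construct} an efficient admissible covering. From (ii) and $U_0\ge C_{r,\theta}^{s,n}(E)^{-1}$ one obtains, for $x\in\spt\mu_0$ and $r\le\rho\le r^\theta$, the mass bound $\mu_0(B(x,\rho))\le\phi_{r,\theta}^{s,n}(\rho)^{-1}U_0(x)=(\rho/r)^s\,C_{r,\theta}^{s,n}(E)^{-1}$. I would then cover $\spt\mu_0$ — hence $E$ — by balls of adaptively chosen radii in $[r,r^\theta]$ through a Vitali/stopping-time selection: at each point take the largest admissible radius at which the $\mu_0$-mass is comparable to its ``fair share'' $(\rho/r)^s C_{r,\theta}^{s,n}(E)^{-1}$, and use the disjointness built into the selection together with the mass bound to control $\sum_i(\diam A_i)^s$. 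The factor $\log(\diam E/r)$ appears because the tail $\phi_{r,\theta}^{s,n}(u)\asymp r^{\theta(n-s)+s}u^{-n}$ for $u\ge r^\theta$ is only logarithmically summable over the $\asymp\log(\diam E/r)$ relevant dyadic scales; arranging the selection so that this is the \emph{only} loss is the technical heart of the matter, and parallels Falconer's box-counting computation in \cite{fal:capacity-for-box-dimension}. Finally, (iv) is immediate: taking logarithms in $r^sC_{r,\theta}^{s,n}(E)\le S_{r,\theta}^s(E)\le C(n)\log(\diam E/r)\,r^sC_{r,\theta}^{s,n}(E)$ and dividing by $\log(1/r)$ shows $\limsup_{r\to0}\tfrac{\log S_{r,\theta}^s(E)}{\log(1/r)}=-s+\limsup_{r\to0}\tfrac{\log C_{r,\theta}^{s,n}(E)}{\log(1/r)}$, since the $r^s$ contributes $-s$ and the $\log$ factor is negligible; hence the critical exponent of $s\mapsto S_{r,\theta}^s(E)$ (which is $\overline\dim_\theta E$) equals the critical exponent $s_0'$ of $s\mapsto C_{r,\theta}^{s,n}(E)$ (which is $\overline\dim_\theta^n E$).
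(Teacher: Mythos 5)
Your outline for (i), (ii), the lower bound in (iii), and (iv) is correct and is, as you note, the standard potential-theoretic argument; this matches what is done in [BuFaFr21, Lemma 3.1, Prop.\ 4.2, Thm.\ 4.1], which the paper cites for this proposition rather than reproving it.

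The gap is in your sketch of the upper bound in (iii), and it is a directional one. You derive the \emph{upper} mass estimate $\mu_0(B(x,\rho))\le \phi_{r,\theta}^{s,n}(\rho)^{-1}U_0(x)=(\rho/r)^s C_{r,\theta}^{s,n}(E)^{-1}$, valid for $x\in\spt\mu_0$, and then propose a Vitali/stopping-time selection whose disjointness, together with ``the mass bound,'' is to control $\sum_i(\diam A_i)^s$. But an \emph{upper} bound on the mass of a disjoint family gives nothing: if $\mu_0(B_i)\le(\rho_i/r)^s C^{-1}$ and the $B_i$ are disjoint, summing $\mu_0(B_i)\le 1$ yields no constraint on $\sum\rho_i^s$. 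To bound the covering sum from above one needs the \emph{lower} mass estimate: for every $x\in E$ there is a scale $\rho_x\in[r,r^\theta]$ with $\mu_0(B(x,\rho_x))\gtrsim(\rho_x/r)^s C_{r,\theta}^{s,n}(E)^{-1}/\log(\diam E/r)$; then disjointness of a Vitali subfamily gives $\sum_i\rho_i^s\lesssim r^s C_{r,\theta}^{s,n}(E)\log(\diam E/r)$, which is the desired bound. That lower estimate does \emph{not} come from (ii); it comes from the \emph{other} half of the variational inequality, namely $U_0(x)\ge C_{r,\theta}^{s,n}(E)^{-1}$ for every $x\in E$ (not merely on $\spt\mu_0$), by decomposing the potential $U_0(x)=\int\phi_{r,\theta}^{s,n}(|x-y|)\,d\mu_0 y$ over the $\asymp\log(\diam E/r)$ dyadic annuli centred at $x$ and applying a pigeonhole; the logarithm records the number of annuli. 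This also makes the phrase ``cover $\spt\mu_0$ --- hence $E$'' unnecessary and, as stated, incorrect (covering $\spt\mu_0$ does not cover $E$): since $U_0\ge C^{-1}$ holds on all of $E$, the scale selection works for every $x\in E$ directly, so one covers $E$ outright.

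A secondary remark: the upper mass bound on $\spt\mu_0$ is not what is needed here, but it is the right tool in a different part of the paper — in the sharpness argument of section~\ref{sec:sharpness-section} (see \eqref{eq:upper-bound} and its consequences \eqref{eq:upper-bound-1-5}--\eqref{eq:upper-bound3}), where one does want to prevent the equilibrium measure from concentrating.
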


When either $\theta = 1$ or $s=m$ the kernel $\phi_{r,\theta}^{s,m}$ reduces to the kernel $\phi_r^m$ used by Falconer in \cite{fal:capacity-for-box-dimension} in the setting of box-counting dimension. This special case will resurface in Example \ref{ex:dim-B-sharpness-sketch}, where we sketch a capacitary proof of Kaufman's result on the sharpness of the Sobolev distortion estimate \eqref{eq:Sob-dim-B-bound}. For use in that example, we recall here that the box-counting profile ${\overline\dim}_B^m E$ is defined in terms of the capacity $C_r^m(E) := ( \inf \{ \cE_r^m(\mu) \, : \, \mu \in \cM(E) \} )^{-1}$ and the corresponding energy $\cE_r^m(\mu):= \iint \phi_r^m(|x-y|) \, d\mu x \, d \mu y$ via the formula
$$
{\overline\dim}_B^m E := \limsup_{r \to 0} \frac{\log C_r^m(E)}{\log(1/r)}.
$$
An analog of Proposition \ref{prop:box-profile} holds, see e.g.\ Lemma 2.1, Corollary 2.4, and Corollary 2.5 in \cite{fal:capacity-for-box-dimension}. In particular, for $E \subset \R^n$ bounded, ${\overline\dim}_B^n E = {\overline\dim}_B E$.

\smallskip

For later purposes we record here a symmetrisation lemma.

\begin{lem}\label{lem:Ers-symmetrization}
Let $B^n$ be the unit ball in $\R^n$. For $y \in B^n$, $a>0$, $m \in \{1,\ldots,n\}$, $0<s\le m$, and $0<r<1$,
\begin{equation}\label{eq:Ers-symmetrization}
\int_{B^n} \phi_{r,\theta}^{s,m} (|ax-y|) \, dx \le \int_{B^n} \phi_{r,\theta}^{s,m} (|ax|) \, dx.
\end{equation}
\end{lem}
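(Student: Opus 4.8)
The plan is to reduce the inequality to a pointwise statement about the radial rearrangement of an indicator function, or equivalently to exploit that the kernel $u \mapsto \phi_{r,\theta}^{s,m}(u)$ is nonincreasing in $u$. Write $g(u) := \phi_{r,\theta}^{s,m}(u) = \min\{1,(r/u)^s, r^{\theta(m-s)+s}/u^m\}$. Since $s>0$ and $m \ge s > 0$, both $(r/u)^s$ and $r^{\theta(m-s)+s}/u^m$ are strictly decreasing in $u>0$, so $g$ is nonincreasing on $(0,\infty)$; moreover $g$ is bounded (by $1$) and, for fixed $r$, $g(u) \to 0$ as $u \to \infty$. The left-hand side of \eqref{eq:Ers-symmetrization} is $\int_{B^n} g(|ax-y|)\,dx$ and the right-hand side is $\int_{B^n} g(|ax|)\,dx$; after the change of variables $z = ax$ (valid whether or not $a \le 1$, with the Jacobian cancelling from both sides) this becomes the assertion that
$$
\int_{aB^n} g(|z-y|)\,dz \le \int_{aB^n} g(|z|)\,dz \qquad \text{for all } y.
$$

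First I would establish this via the layer-cake (distribution-function) representation: for any nonincreasing $g \ge 0$ one has $g(u) = \int_0^\infty \mathbf{1}_{\{g(u) > t\}}\,dt = \int_0^\infty \mathbf{1}_{\{u < \rho(t)\}}\,dt$ for a suitable nonincreasing function $\rho(t) := \sup\{u : g(u) > t\}$ (with the convention $\sup\emptyset = 0$). Substituting and applying Fubini (everything is nonnegative), it suffices to show, for every radius $\rho \ge 0$,
$$
\bigl|\, aB^n \cap B(y,\rho)\,\bigr| \le \bigl|\, aB^n \cap B(0,\rho)\,\bigr|,
$$
i.e. that a ball of radius $\rho$ centred at the centre of the domain $aB^n$ captures at least as much of $aB^n$ as a ball of radius $\rho$ centred anywhere else. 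This is a classical fact — it is exactly the statement that $\mathbf{1}_{B(y,\rho)}$ and $\mathbf{1}_{aB^n}$ satisfy the Riesz rearrangement inequality, but in this concentric-balls case it also follows from an elementary symmetry/reflection argument (reflecting the slice of $B(y,\rho)$ lying outside $aB^n$ across the hyperplane bisecting the segment $[0,y]$, or from Anderson's lemma on integrals of symmetric unimodal functions over symmetric convex bodies).

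The main obstacle is essentially bookkeeping rather than conceptual: one must confirm that the layer-cake reduction is legitimate for this particular $g$ (finiteness of the integrals — guaranteed since $g \le 1$ and $aB^n$ has finite measure — and measurability of the super-level sets, which is immediate from monotonicity), and then cite or give the one-line reflection proof of the concentric-ball domination. I expect no genuine difficulty: the hypotheses $0 < s \le m$ and $0 < r < 1$ are used only to guarantee monotonicity and boundedness of the kernel, and the parameter $a > 0$ plays no role beyond rescaling the ball. One could alternatively bypass the layer-cake step entirely and invoke Anderson's inequality directly, noting that $x \mapsto g(|x|)$ is a bounded, radially symmetric, radially nonincreasing function and $B^n$ is symmetric about the origin, whence $\int_{B^n} g(|x - c|)\,dx$ is maximised at $c = 0$; applying this with $c = y/a$ and rescaling gives \eqref{eq:Ers-symmetrization}. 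I would present the layer-cake argument as the primary proof since it is self-contained.
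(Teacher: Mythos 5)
Your proposal is correct and follows essentially the same route as the paper: Cavalieri's principle (layer-cake) reduces the claim to comparing the volumes of super-level sets, and since the kernel is nonincreasing these super-level sets are intersections of $B^n$ with concentric versus off-center balls, and the concentric case dominates. The only cosmetic difference is that where you invoke Anderson's lemma or Riesz rearrangement, the paper observes the more elementary fact $\Vol(B(X,R)\cap B(Y,R'))\le \Vol(B(X,R\wedge R'))=\Vol(B(X,R)\cap B(X,R'))$ (the intersection lies in both balls, hence has volume at most that of the smaller), after explicitly tabulating the super-level radii $R_1,R_2,R_2',R_3$ arising from the three pieces of the kernel.
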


An anonymous referee pointed out that this lemma can be proved by appealing to the  Hardy--Littlewood rearrangement inequality applied to the functions $f(x) = \phi_{r,\theta}^{s,m} (|ax-y|)$ and $g(x) = \chi_{B^n}(x)$.  For the reader's benefit, we retain our alternative  direct  proof here.

\begin{proof}
By Cavalieri's principle,
$$
\int_{B^n} \phi_{r,\theta}^{s,m} (|ax-y|) \, dx = \int_0^1 \Vol( \{ x \in B \, : \, \phi_{r,\theta}^{s,m}(|ax-y|) > t \} ) \, dt,
$$
where $\Vol$ denotes Lebesgue measure in $\R^n$. It suffices to show that
\begin{equation}\label{eq:Ers-symmetrization-proof-1}
 \Vol( \{ x \in B^n \, : \, \phi_{r,\theta}^{s,m}(|ax-y|) > t \} ) 
\le
 \Vol( \{ x \in B^n \, : \, \phi_{r,\theta}^{s,m}(|ax|) > t \} ) 
\end{equation}
for all $0<t<1$.

We note that
$$
\phi_{r,\theta}^{s,m}(u) = \begin{cases} 1 & \mbox{if $0\le u < r$,} \\
\left( \frac{r}{u} \right)^s & \mbox{if $r \le u < r^\theta$,} \\
\frac{r^{\theta(m-s)+s}}{u^m} & \mbox{if $r^\theta \le u$.}
\end{cases}
$$
Set $z:=a^{-1} y$ and define radii 
$$
R_1 := a^{-1} r, \,\, R_2 := a^{-1} r^\theta, \,\, R_2' := a^{-1} t^{-1/s}r, \,\, \mbox{and} \,\, R_3 := a^{-1} t^{-1/m} r^{\theta + (1-\theta)(s/m)}.
$$
Introducing the standard notation $a\wedge b := \min\{a,b\}$ we observe that $R_1 < R_2 \wedge R_2'$ always, whilst
$$
\biggl( R_2 < R_3 \quad \Leftrightarrow \quad t < r^{(1-\theta)s} \biggr) \qquad \Longrightarrow \qquad R_2 = R_2 \wedge R_2'
$$
and
$$
\biggl( R_2' < R_3 \quad \Leftrightarrow \quad t > r^{(1-\theta)s} \biggr) \qquad \Longrightarrow \qquad R_2' = R_2 \wedge R_2.
$$
We observe that $\phi_{r,\theta}^{s,m}(|ax-y|) > t$ if and only if one of the following three conditions holds true:
\begin{itemize}
\item $x \in B(z,R_1)$, or
\item $x \in B(z,R_2 \wedge R_2') \setminus B(z,R_1)$, or
\item $t < r^{(1-\theta)s}$ and $x \in B(z,R_3) \setminus B(z,R_2)$.
\end{itemize} 
It follows that
\begin{equation*}\begin{split}
\Vol( \{ x \in B \, : \, \phi_{r,\theta}^{s,m}(|ax-y|) > t \} ) = \begin{cases} \Vol(B^n \cap B(z,R_3)) & \mbox{if $0<t<r^{(1-\theta)s}$,} \\
\Vol(B^n \cap B(z,R_2')) & \mbox{if $r^{(1-\theta)s} < t < 1$.} \end{cases}
\end{split}\end{equation*}
Taking advantage of the elementary fact that 
$$
\Vol(B(X,R) \cap B(Y,R')) \le \Vol(B(X,R \wedge R')) = \Vol(B(X,R) \cap B(X,R'))
$$
for $X,Y \in \R^n$ and $R,R'>0$,  we conclude that \eqref{eq:Ers-symmetrization-proof-1} holds true. The proof is complete.
\end{proof}

\subsection{Defining intermediate dimensions in arbitrary metric spaces} \label{sec:intmetric}

In this subsection we address two obvious questions concerning how to define intermediate dimensions in arbitrary metric spaces:~(i) does our definition depend on the choice of embedding? (ii) can the intermediate dimensions be defined without going via an embedding? 

In this paper we chose to define intermediate dimensions by  first embedding  into a uniformly perfect space.  Fortunately,  this definition does not depend on the choice of embedding.

\begin{prop}\label{prop:independence}
Let $X$ be a totally bounded metric space. The function $\theta \mapsto \overline\dim_\theta X$, $0<\theta<1$, is independent of the choice of isometric embedding $X \hookrightarrow Z$. 
\end{prop}
 
Proposition \ref{prop:independence} is an easy consequence of the following two lemmas. For convenience in this subsection, we denote by $\overline\dim_\theta^{X\hookrightarrow Z} X$ the value of the $\theta$-intermediate dimension of $X$ computed with respect to a fixed isometric embedding of $X$ into a uniformly perfect metric space $Z$.

\begin{lem}\label{lem:independence-1}
If $X \hookrightarrow Z$ and $Z \hookrightarrow W$ are isometric embeddings, with $Z$ and $W$ uniformly perfect, then
$$
\overline\dim_\theta^{X\hookrightarrow Z} X = \overline\dim_\theta^{X \hookrightarrow W} X
$$
for each $0<\theta<1$.
\end{lem}

If $X \stackrel{\iota_1}{\hookrightarrow} Z_1$ and $X \stackrel{\iota_2}{\hookrightarrow} Z_2$ are isometric embeddings, we denote by $Z = Z_1 \coprod_X Z_2$ the {\it metric join} of $Z_1$ and $Z_2$ with respect to $X$. The space $Z$ is the quotient of the disjoint union $Z_1 \coprod Z_2$ with respect to the equivalence relation which identifies $\iota_1(x) \sim \iota_2(x)$ for each $x \in X$, equipped with the metric
$$
d_Z(a,b) = \begin{cases} d_{Z_1}(a,b) & \mbox{if $a,b \in Z_1$,} \\
d_{Z_2}(a,b) & \mbox{if $a,b \in Z_2$,} \\
\inf_{x \in X} d_{Z_1}(\iota_1(x),a) + d_{Z_2}(\iota_2(x),b) & \mbox{if $a \in Z_1 \setminus \iota_1(X)$ and $b \in Z_2 \setminus \iota_2(X)$,} \\
\inf_{x \in X} d_{Z_1}(\iota_1(x),b) + d_{Z_2}(\iota_2(x),a) & \mbox{if $b \in Z_1 \setminus \iota_1(X)$ and $a \in Z_2 \setminus \iota_2(X)$.}
\end{cases}
$$
We leave to the reader the verification that $d_Z$ is a metric.

\begin{lem}\label{lem:independence-2}
If $Z_1$ and $Z_2$ are uniformly perfect, then $Z_1 \coprod_X Z_2$ is also uniformly perfect.
\end{lem}

Lemma \ref{lem:independence-2} follows immediately by combining \cite[Theorem 13.1.2]{fr:book} and \cite[Theorem 2.2]{fr:homogeneity-fractals}.

\begin{proof}[Proof of Lemma \ref{lem:independence-1}]
We assume that uniformly perfect metric spaces $Z$ and $W$ are given, with $X$ isometrically embedded in $Z$ and $Z$ isometrically embedded in $W$. For notational convenience we assume that $X \subset Z \subset W$. The inequality
$$
\overline\dim_\theta^{X\hookrightarrow Z} X \ge \overline\dim_\theta^{X \hookrightarrow W} X
$$
is obvious from the definitions. For the converse inequality, we fix $0<\theta'<\theta$ and will show that
$$
\overline\dim_{\theta'}^{X\hookrightarrow Z} X \le \overline\dim_\theta^{X \hookrightarrow W} X;
$$
the desired conclusion now follows by letting $\theta' \nearrow \theta$ and using the continuity of intermediate dimension.

For fixed $s>\overline\dim_\theta^{X \hookrightarrow W} X$ and $\theta' < \theta$, we let $\eps'>0$ and choose $\delta_0$ as in the definition of intermediate dimension for the choice $\eps := 2^{-s} \eps'$. We let $c_Z$ denote a constant for the uniform perfectness of $Z$ and impose an additional constraint of the form $\delta_0 \le \eta(c_Z,\theta,\theta')$ for some positive value $\eta(c_Z,\theta,\theta')$ which will be determined later in the proof. For $0<\delta<\delta_0$, let $\{A_i\}$ be a collection of subsets of $W$ which covers $X$, so that $\delta^{1/\theta} \le \diam A_i \le \delta$ for all $i$ and $\sum_i (\diam A_i)^s < \eps$.

For each $i$, fix a point $a_i \in A_i \cap Z$, let $r_i = \diam A_i$, and set $B_i := B_Z(a_i,r_i)$. Observe that $X \cap A_i \subset Z \cap A_i \subset B_i$ so $\{B_i\}$ is a collection of subsets of $Z$ which covers $X$. We verify that the sets $B_i$ satisfy the necessarily diameter constraint. First, $\diam B_i \le 2r_i = 2 \diam A_i \le 2\delta$. On the other hand,
$$
\diam B_i \ge c_Z r_i = c_Z \diam A_i \ge (2\delta)^{1/\theta'}
$$
provided
\begin{equation}\label{eq:cZ-delta}
c_Z \delta^{1/\theta} \ge (2\delta)^{1/\theta'}.
\end{equation}
The inequality in \eqref{eq:cZ-delta} determines the aforementioned constraint $\delta_0 \le \eta(c_Z,\theta,\theta')$. Finally,
$$
\sum_i (\diam B_i)^s \le 2^s \sum_i (\diam A_i)^s < \eps.
$$
This completes the proof.
\end{proof}

There is another, equally natural, way to define intermediate dimensions in arbitrary metric spaces.  This  was suggested to us by an anonymous referee but had also occurred to us when we first wrote the paper. When defining intermediate dimensions above we defined the `cost' of a cover $\{A_i\}$ by
\[
\sum_i (\diam A_i)^s
\]
with the requirement that
\[
\delta^{1/\theta} \leq \diam A_i \leq \delta
\]
for all $i$.  The problem in arbitrary metric spaces is there may not exist sets with the required constraints on diameter.  However, we could define an alternative to intermediate dimensions, temporarily denoted by  $\overline\dim_\theta^* X$, by replacing the above `cost' by
\[
\sum_i \max\{\diam A_i, \delta^{1/\theta}\}^s
\]
with the only requirement now being that $\diam A_i \leq \delta$ for all $i$.  We now prove that these two approaches are, indeed, equivalent. 

\begin{prop} \label{prop:newequiv}
Let $X$ be a totally bounded metric space. Then   $  \overline\dim_\theta X = \overline\dim_\theta^* X$ for all $0<\theta<1$. 
\end{prop}

\begin{proof}
It is clear that for  uniformly perfect metric spaces $A$,  $  \overline\dim_\theta A = \overline\dim_\theta^* A$, and that $   \overline\dim_\theta^* $ is preserved under isometry.  Let $f$ be the isometric embedding of $X$ into a uniformly perfect metric space $Z$ defining  $  \overline\dim_\theta  X$.  Then
\[
 \overline\dim_\theta X =  \overline\dim_\theta f(X) =  \overline\dim_\theta^* f(X) =  \overline\dim_\theta^* X,
\]
as required.
\end{proof}

\section{Distortion of intermediate dimension by supercritical Sobolev maps}\label{sec:Sobolev-distortion-intermediate}

In this section, we prove Theorem \ref{thm:sob_dim_dist_intermediate} on the distortion of intermediate dimension by Sobolev mappings. In order to provide context for our proof, we briefly sketch Kaufman's arguments in \cite{kau:sobolev} for the corresponding distortion estimates for Hausdorff and box-counting dimension. The proof which we give for Theorem \ref{thm:sob_dim_dist_intermediate} involves a synthesis of these two arguments, which further illustrates the manner in which intermediate dimension interpolates between these two concepts.

\subsection{Sobolev distortion of Hausdorff and box-counting dimensions}

We recall from the introduction the following statements of the Sobolev dimension distortion estimates. Let $\dim$ denote either Hausdorff dimension $\dim_H$ or box-counting dimension $\ubd$, let $E \subset \Omega$ be such that $\dim(E) = s \in (0,n)$, and let $f \in W^{1,p}(\Omega:\R^N)$, $p>n$. Then $\dim f(E) \le \tau_p(\dim E)$.

\smallskip

In the case when $\dim = \dim_H$ is Hausdorff dimension, the proof is a simple consequence of H\"older's inequality. For $\delta>0$, let $(Q_i)$ be essentially disjoint dyadic cubes in $\Omega$ such that $E \subset \cup_i Q_i$ and $\diam(Q_i) < \delta$ for each $i$. In view of the Morrey--Sobolev inequality \eqref{eq:MS} we have that $\diam f(Q_i) \le \eps$, where $\eps = C ||Df||_p \delta^{1-n/p}$. For $\sigma>s = \dim_H(E)$ we have
$$
\cH^{\tau}_\eps(f(E)) \le \sum_i \diam f(Q_i)^{\tau} \le C \sum_i \diam(Q_i)^{\tau(1-n/p)} \left( \int_{Q_i} |Df|^p \right)^{\tau/p},
$$
where $\tau = \tau_p(\sigma)$. We apply H\"older's inequality and the identity $\tau(1-\tfrac{n}{p})(\tfrac{p}{p-\tau}) = \sigma$, and take the infimum over coverings and the limit as $\delta \to 0$ to obtain $\cH^\tau(f(E)) \le C \cH^\sigma(E)^{1-\tau/p} \, ||Df||_p^\tau$. Since $\sigma>\dim_H(E)$ we conclude that $\dim_H(f(E)) \le \tau_p(s)$. This proof works for all subsets $E$ of $\Omega$.

\smallskip

We now sketch the argument for box-counting dimension, following the methodology employed by Kaufman in \cite{kau:sobolev}. Assume that $E \subset \Omega$ is a compact subset. We consider dyadic cubes $Q \in \cD_m$ as before, and we choose $M$ large enough that any cube in $\cD_m$ which meets $E$ necessarily lies in $\cD_m(\Omega)$. For a given choice of $s \in (0,n)$ and for each such $m$, set 
\begin{equation}\label{eq:r-m}
r_m := 2^{-ms/\tau_p(s)}.
\end{equation}
We declare a dyadic cube $Q$ to be {\it $r_m$-major} if $\diam f(Q) \ge r_m$ and {\it $r_m$-minor} if $\diam f(Q) < r_m$.  If $Q$ is $r_m$-minor but the parent $\widehat{Q}$ of $Q$ is $r_m$-major, we say that $Q$ is {\it $r_m$-critical}.\footnote{In \cite{kau:sobolev}, a different definition for criticality is adopted: a cube is said to be {\it ($r_m$-)critical} if it is major, but each of its descendants is minor. The definition for criticality which we adopt, referencing the parent of $Q$ rather than its descendants, clarifies the ensuing stopping time argument.}

We first show that the number $N$ of $r_m$-major cubes in $\bigcup \{ \cD_{m'}(\Omega) : m' \ge m \}$ is $\lesssim 2^{ms}$, where the implicit constant depends on $n$, $p$, and $||Df||_p$. For each $r_m$-major cube $Q \in \cD_{m'}(\Omega)$, \eqref{eq:MS} gives the estimate 
$$
2^{-ms/\tau_p(s)} = r_m \le C(n,p) 2^{-m'(1-n/p)} \left( \int_Q |Df|^p \right)^{1/p}.
$$
Rearranging this inequality yields $1 \le C(n,p)^p \, 2^{mps/\tau_p(s)} 2^{-m'(p-n)} \, \int_Q |Df|^p$; since distinct elements of $\cD_{m'}(\Omega)$ are essentially disjoint we conclude that
the number $N_{m'}$ of $r_m$-major cubes $Q \in \cD_{m'}(\Omega)$ satisfies
\begin{equation}\label{eq:Nm-prime}
N_{m'} \le C(n,p)^p \, 2^{mps/\tau_p(s)} 2^{-m'(p-n)} ||Df||_p^p.
\end{equation}
Hence 
$$
N \le \sum_{m'=m}^\infty N_{m'} \le C'(n,p) \, 2^{mps/\tau_p(s)} 2^{-m(p-n)} ||Df||_p^p \le C''(n,p) ||Df||_p^p 2^{ms}.
$$
As a simple consequence, we also note that the number of $r_m$-critical cubes in $\bigcup \{ \cD_{m'}(\Omega) : m' \ge m \}$ is also $\lesssim 2^{ms}$. In fact, the number of $r_{m+1}$-critical cubes is at most $2^n$ times the number of $r_m$-major cubes.

Now let us assume that the value $s$ as above is chosen to be the box-counting dimension of $E$. Fix $\sigma > s$. For each $m \ge M$, we can cover $E$ by a collection of cubes $(Q_i)$ in $\cD_m(\Omega)$ with cardinality $\lesssim 2^{m\sigma}$ . Then $f(E)$ is covered by the images $f(Q_i)$ of the $r_m$-minor cubes in this collection, along with the images of all $r_{m'}$-critical cubes for all $m' \ge m$. The number of such sets is $\lesssim 2^{m \sigma} + 2^{m s} \lesssim 2^{m \sigma}$ and for every one of these sets $f(Q)$ we have $\diam f(Q) \le r_m$. Replacing each $f(Q)$ by a superset whose diameter exactly equals $r_m$ yields an allowable covering for the upper box-counting dimension of $f(E)$, and
$$
\ubd f(E) \le \limsup_{m \to \infty} \frac{\log(2^{m \sigma})}{-\log r_m} = \frac{\tau_p(s) \, \sigma}{s}.
$$
Letting $\sigma \searrow s$ yields $\ubd f(E) \le \tau_p(s)$ as desired.

\subsection{Proof of Theorem \ref{thm:sob_dim_dist_intermediate}}

We now turn to the proof of the corresponding distortion estimate for intermediate dimensions, which makes use of ingredients from both of the arguments sketched in the previous subsection.

\begin{proof}[Proof of Theorem \ref{thm:sob_dim_dist_intermediate}]
Fix $s<\sigma<\sigma'<n$ and set $\tau = \tau_p(\sigma)$ and $\tau' = \tau_p(\sigma')$, where the function $\tau_p(\sigma)$ is defined as in \eqref{eq:tau-s}. We will show that
${\overline\dim}_\theta f(E) \le \tau'$; letting $\sigma'\searrow\sigma$ and $\sigma \searrow s$ completes the proof.

We take advantage of Proposition \ref{prop:intermediate-dimension-via-dyadic-cubes} to compute intermediate dimensions via dyadic coverings. Since ${\overline\dim}_\theta(E)<\sigma$ we have the following: for each $\eps>0$ there exists $\delta_0>0$ so that for each $0<\delta\le\delta_0$ there exists a covering $(Q_i)_{i \in \cI}$ of $E$ by pairwise disjoint dyadic cubes so that
\begin{equation}\label{eq:1}
\delta^{1/\theta} \le \diam(Q_i) \le \delta
\end{equation}
for each $i$ and
\begin{equation}\label{eq:2}
\sum_i (\diam Q_i)^\sigma < \eps.
\end{equation}
Moreover, each cube $Q_i$ lies in $\cD_m$ for some $m$ satisfying \eqref{eq:dyadic-cube-estimate}, hence also satisfying 
\begin{equation}\label{eq:cube-estimate}
2^{-m} \le \delta.
\end{equation}
We may assume without loss of generality that
$$
\delta_0 \le \eps \le 1
$$
and we may also assume that $m$ is so large that any cube in $\cD_m$ which meets $E$ lies in $\cD_m(\Omega)$.
We also record the following estimate for the cardinality of the index set $\cI$, which follows from \eqref{eq:1} and \eqref{eq:2}:
\begin{equation}\label{eq:cardinality-of-I}
\# \cI \le \delta^{-\sigma/\theta} \eps.
\end{equation}
Set
\begin{equation}\label{eq:definition-of-eta}
\eta := \delta^{\sigma/\tau'}
\end{equation}
and, as before, say that a dyadic cube $Q \in \cD_m$ for some $m$ satisfying \eqref{eq:cube-estimate} is
\begin{itemize}
\item {\it $\eta$-major} if $\diam f(Q) > \eta$,
\item {\it $\eta$-minor} if $\diam f(Q) \le \eta$, and
\item {\it $\eta$-critical} if $\diam f(Q) \le \eta$ and $\diam f(\widehat{Q}) > \eta$.
\end{itemize}

\begin{lem}\label{lem:size-estimate}
There exists $\tilde\gamma>0$ so that
$$
N := \# \{ Q \in \cD: \mbox{$Q\in \cD_m$ for some $2^{-m} \le \delta$, and $Q$ is $\eta$-major} \} 
$$
satisfies
$$
N \le C(n,p) ||Df||_p^p \delta^{-\sigma + \tilde\gamma}.
$$
\end{lem}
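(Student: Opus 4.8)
The plan is to adapt, essentially line for line, Kaufman's count of $r_m$-major cubes recorded in \eqref{eq:Nm-prime}, replacing the single scale $r_m = 2^{-ms/\tau_p(s)}$ by the scale $\eta = \delta^{\sigma/\tau'}$ and the range ``$m' \ge m$'' by ``levels $m'$ with $2^{-m'}\le\delta$''. The point of having introduced the auxiliary exponent $\sigma' > \sigma$ (so that $\tau' = \tau_p(\sigma') > \tau_p(\sigma) = \tau$ by the monotonicity of $\tau_p$) is precisely to manufacture the positive gap $\tilde\gamma$.

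First I would fix an admissible level $m'$, i.e.\ $2^{-m'}\le\delta$, and bound the number $N_{m'}$ of $\eta$-major dyadic cubes $Q\in\cD_{m'}$ contained in $\Omega$. (It suffices to count cubes inside $\Omega$: via the usual criticality bookkeeping the $\eta$-major cubes that matter for the remainder of the proof of Theorem \ref{thm:sob_dim_dist_intermediate} are sub-cubes of the $Q_i$, hence contained in $\bigcup_i Q_i \subset \Omega$, so the Morrey--Sobolev inequality \eqref{eq:MS} is available for them.) For such a $Q$, combining \eqref{eq:MS} with $\diam Q = \sqrt n\, 2^{-m'}$ and with $\diam f(Q) > \eta$ and rearranging gives a lower bound of the form $\int_Q |Df|^p \ge c(n,p)\,\eta^p\,2^{m'(p-n)}$. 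Since the interiors of distinct cubes of $\cD_{m'}$ are disjoint, summing this over all $\eta$-major $Q\in\cD_{m'}$ and comparing with $\int_\Omega|Df|^p = \|Df\|_p^p<\infty$ yields the exact analogue of \eqref{eq:Nm-prime}:
\begin{equation*}
N_{m'} \le C(n,p)\,\|Df\|_p^p\,\eta^{-p}\,2^{-m'(p-n)}.
\end{equation*}

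Next I would sum over all admissible levels. Because $p>n$, the series $\sum_{m'\,:\,2^{-m'}\le\delta} 2^{-m'(p-n)}$ is geometric with ratio $2^{-(p-n)}<1$ and total at most $C(n,p)\,\delta^{p-n}$ (the largest admissible value of $2^{-m'}$ being $\le\delta$). Hence
\begin{equation*}
N = \sum_{m'\,:\,2^{-m'}\le\delta} N_{m'} \le C(n,p)\,\|Df\|_p^p\,\eta^{-p}\,\delta^{p-n} = C(n,p)\,\|Df\|_p^p\,\delta^{\,p-n-p\sigma/\tau'},
\end{equation*}
using $\eta = \delta^{\sigma/\tau'}$. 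Finally, writing $p/\tau' = (p-n+\sigma')/\sigma'$ and simplifying shows
\begin{equation*}
p-n-\frac{p\sigma}{\tau'} = -\sigma + \tilde\gamma, \qquad \tilde\gamma := (p-n)\,\frac{\sigma'-\sigma}{\sigma'}>0,
\end{equation*}
which is exactly the arithmetic encoding of $\tau<\tau'$. Since $\delta\le1$ this gives $N \le C(n,p)\,\|Df\|_p^p\,\delta^{-\sigma+\tilde\gamma}$, as required.

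Essentially every step is routine; the only genuinely load-bearing point is the geometric summation over scales, which is where the hypothesis $p>n$ enters, together with the observation that the resulting $\delta$-exponent strictly exceeds $-\sigma$ — precisely the strict monotonicity $\tau_p(\sigma)<\tau_p(\sigma')$ afforded by the choice $\sigma<\sigma'$. The one technical nicety is the restriction to cubes contained in $\Omega$, needed both so that \eqref{eq:MS} applies and so that $N$ is finite; as noted, this is harmless for the use of the lemma in the sequel.
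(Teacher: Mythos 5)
Your proof is correct and follows essentially the same route as the paper's: bound $N_{m'}$ for each admissible level via the Morrey--Sobolev inequality and disjointness, sum the geometric series in $m'$ over scales $2^{-m'}\le\delta$, and then verify the exponent arithmetic to extract $\tilde\gamma = (p-n)(\sigma'-\sigma)/\sigma'$, which matches \eqref{eq:tilde-gamma} exactly. The added remark about restricting to cubes inside $\Omega$ is a sensible clarification of a point the paper leaves implicit, but otherwise this is the paper's proof.
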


\begin{proof}[Proof of Lemma \ref{lem:size-estimate}]
We first fix $m$ satisfying \eqref{eq:cube-estimate} and estimate $N_m := \# \{Q \in \cD_m : \mbox{$Q$ is $\eta$-major} \}$. Following a similar line of reasoning as for \eqref{eq:Nm-prime}, we get
$$
N_m < C(n,p) \eta^{-p} 2^{-m(p-n)} ||Df||_p^p
$$
and hence
\begin{equation*}\begin{split}
N &\le C(n,p) \eta^{-p} ||Df||_p^p \sum_{m:2^{-m} \le \delta} 2^{-m(p-n)} \\
&\le C(n,p) \delta^{-p\sigma/\tau' + p - n} ||Df||_p^p.
\end{split}\end{equation*}
To complete the proof, we note that
\begin{equation*}\begin{split}
-\frac{p\sigma}{\tau'} + p - n 
= - \sigma + \frac{(\sigma'-\sigma)(p-n)}{\sigma'}
\end{split}\end{equation*}
and so the desired conclusion holds with
\begin{equation}\label{eq:tilde-gamma}
\tilde\gamma :=\frac{(\sigma'-\sigma)(p-n)}{\sigma'}.
\end{equation}
\end{proof}

\noindent For each $i \in \cI$ so that $Q_i$ is $\eta$-minor, we distinguish two cases:
\begin{itemize}
\item $\diam f(Q_i) \ge \eta^{1/\theta}$. Let $\cM_{\ge}$ denote the collection of all sets $f(Q_i)$ of this type.
\item $\diam f(Q_i) < \eta^{1/\theta}$. In this case we choose a set $A_i \supset f(Q_i)$ so that $\diam A_i = \eta^{1/\theta}$. Let $\cM_{<}$ denote the collection of all sets $A_i$ so selected.
\end{itemize}
Next, for each $\eta$-critical dyadic cube $Q$ with $Q \in \cD_m$ for some $m \ge -\log_2(\delta)+c$, we distinguish two cases:
\begin{itemize}
\item $\diam f(Q) \ge \eta^{1/\theta}$. Let $\cC_{\ge}$ denote the collection of all sets $f(Q)$ of this type.
\item $\diam f(Q) < \eta^{1/\theta}$. In this case we choose a set $B_Q \supset f(Q)$ so that $\diam B_Q = \eta^{1/\theta}$. Let $\cC_{<}$ denote the collection of all sets $B_Q$ so selected.
\end{itemize}
The set $f(E)$ is covered by the collection
\begin{equation}\label{eq:MMCC}
\cM_{\ge} \cup \cM_{<} \cup \cC_{\ge} \cup \cC_{<},
\end{equation}
and we proceed to estimate
\begin{equation}\label{eq:image-sum}
\sum_{S \in \cM_{\ge} \cup \cM_{<} \cup \cC_{\ge} \cup \cC_{<}} (\diam S)^{\tau'}.
\end{equation}
Observe that \eqref{eq:MMCC}
is an admissible collection of sets for estimating the value of ${\overline\dim}_\theta f(E)$; by construction, we have $\eta^{1/\theta} \le \diam(S) \le \eta$ for all such sets $S$.

The quantity in \eqref{eq:image-sum} is equal to
$$
\sum_{i:f(Q_i) \in \cM_{\ge}} (\diam f(Q_i))^{\tau'} + \bigl( \#\cM_{<} \bigr) \eta^{\tau'/\theta} + \sum_{f(Q) \in \cC_{\ge}} (\diam f(Q))^{\tau'} + \bigl( \# \cC_{<} \bigr) \eta^{\tau'/\theta}
$$
and we proceed to estimate each term in the preceding expression.

First, using \eqref{eq:MS} and then H\"older, we have
\begin{equation*}\begin{split}
&\sum_{i:f(Q_i) \in \cM_{\ge}} (\diam f(Q_i))^{\tau'} \\
&\quad \le C(n,p) \sum_{i:f(Q_i) \in \cM_{\ge}} (\diam Q_i)^{\tau'(1-n/p)} \left( \int_{Q_i} |Df|^p \right)^{\tau'/p} \\
&\quad \le C(n,p) \left( \sum_{i:f(Q_i) \in \cM_{\ge}} (\diam Q_i)^{\tau'(1-n/p)\tfrac{p}{p-\tau'}} \right)^{1-\tau'/p} \left( \sum_{i:f(Q_i) \in \cM_{\ge}} \int_{Q_i} |Df|^p \right)^{\tau'/p} \\
&\quad = C(n,p) \left( \sum_{i:f(Q_i) \in \cM_{\ge}} (\diam Q_i)^{\sigma'} \right)^{1-\tau'/p} \left( \int_{\cup_{i:f(Q_i) \in \cM_{\ge}}} \int_{Q_i} |Df|^p \right)^{\tau'/p} \\
&\quad \le C(n,p) \delta^{(\sigma'-\sigma)(1-\tau'/p)} \left( \sum_{i:f(Q_i) \in \cM_{\ge}} (\diam Q_i)^{\sigma} \right)^{1-\tau'/p} 
||Df||_p^{\tau'} \\
&\quad \le C(n,p) ||Df||_p^{\tau'} \delta^{(\sigma'-\sigma)(1-\tau'/p)} \eps^{1-\tau'/p}.
\end{split}\end{equation*}
Since $\delta_0 \le 1$ we may disregard the factor involving a power of $\delta$ and conclude that
$$
\sum_{i:f(Q_i) \in \cM_{\ge}} (\diam f(Q_i))^{\tau'} \le C(n,p) ||Df||_p^{\tau'} \eps^{1-\tau'/p}.
$$
Note that $1-\tau'/p>0$ since $\tau'<n<p$, where the fact that $\tau'<n$ follows from the assumption $\sigma'<n$.

Next, we estimate
$$
\bigl( \#\cM_{<} \bigr) \eta^{\tau'/\theta} \le \eps
$$
using \eqref{eq:cardinality-of-I} and \eqref{eq:definition-of-eta}.

The collection $\cC_{\ge} \cup \cC_{<}$ is contained in the set of all $\eta$-critical cubes, whose cardinality is at most $2^n$ times the cardinality of the set of all $\eta$-major cubes. Using Lemma \ref{lem:size-estimate} we estimate
$$
\bigl( \# \cC_{<} \bigr) \eta^{\tau'/\theta} \le C \delta^{-\sigma + \tfrac{\sigma}{\theta} + \tilde\gamma};
$$
note that $-\sigma + \tfrac{\sigma}{\theta} + \tilde\gamma > 0$.

Finally, we estimate
$$
\sum_{f(Q) \in \cC_{\ge}} (\diam f(Q))^{\tau'} \le \big( \# \cC_{\ge} \bigr) \eta^{\tau'} 
$$
where we use the fact that each cube $Q$ with $f(Q) \in \cC_{\ge}$ is $\eta$-critical. Again using Lemma \ref{lem:size-estimate} and \eqref{eq:definition-of-eta} we estimate
$$
\bigl( \# \cC_{\ge} \bigr) \eta^{\tau'} \le \delta^{\tilde\gamma}.
$$
Observe that the rationale for making the double choice of $\sigma>s$ and $\sigma'>\sigma$ comes from the estimate for this term; since $\tilde\gamma>0$ we are also able to make this term arbitrarily small.

Combining all four of the preceding estimates and using the fact that $\delta < \delta_0 \le \eps$, we conclude that
$$
\sum_{S \in \cM_{\ge} \cup \cM_{<} \cup \cC_{\ge} \cup \cC_{<}} (\diam S)^{\tau'} \le 
C(n,p,||Df||_p) \left( \eps^{1-\tau'/p} + \eps + \eps^{\sigma(1/\theta-1) + \tilde\gamma} + \eps^{\tilde\gamma} \right).
$$

\medskip

We now show that ${\overline\dim}_\theta f(E) \le \tau'$. First, fix an arbitrary $\eps'>0$. Next, choose $0<\eps\le 1$ so that
$$
C(n,p,||Df||_p) \left( \eps^{1-\tau'/p} + \eps + \eps^{\sigma(1/\theta-1) + \tilde\gamma} + \eps^{\tilde\gamma} \right) < \eps',
$$
where $\tilde\gamma$ is defined as in \eqref{eq:tilde-gamma}. In view of the assumption ${\overline\dim}_\theta E < \sigma$ we have a value $\delta_0 \le \eps$ so that for any $0<\delta \le \delta_0$ there exists an essentially disjoint covering of $E$ by dyadic cubes $(Q_i)$ so that \eqref{eq:1} and \eqref{eq:2} hold. Set
$$
\eta_0 :=  \delta_0^{\sigma/\tau'}
$$
and consider values $0<\eta \le \eta_0$. Each such $\eta$ has the form $\eta = \delta^{\sigma/\tau'}$ for some $\delta \le \delta_0$. The argument given above yields a covering $\{S:S \in \cM_{\ge} \cup \cM_{<} \cup \cC_{\ge} \cup \cC_{<}\}$ for $f(E)$ by sets with diameter in the range $[\eta^{1/\theta},\eta]$, and for which $\sum_S (\diam S)^{\tau'} < \eps'$. It follows that
$$
{\overline\dim}_\theta f(E) \le \tau'
$$
as desired.
\end{proof}

\begin{rem}\label{rem:assouad-case}
As mentioned in the introduction, analogs for \eqref{eq:qc-dim-H-bound2} and \eqref{eq:qc-dim-B-bound} for the Assouad dimension and Assouad spectrum were obtained in \cite{ct:qc-assouad-spectrum}, building on prior work in \cite{tys:assouad}.
More precisely, it was shown in \cite{ct:qc-assouad-spectrum} that
\begin{equation}\label{eq:qc-dim-A-bound}
\alpha(p^\RH_n(K)) \Phi(\dim_A E) \le \Phi(\dim_A f(E)) \le \alpha(p^\RH_n(K^{n-1}))^{-1} \Phi(\dim_A E)
\end{equation}
for compact sets $E \subset \Omega$ and $f:\Omega \to \Omega'$ $K$-quasiconformal. A distinctive feature of \eqref{eq:qc-dim-A-bound} relative to its predecessors is the appearance of the {\it reverse H\"older higher integrability exponent} $p^\RH_n(K)$.
In the course of his proof of the higher integrability theorem, Gehring proved that quasiconformal mappings enjoy the quantitative and scale-invariant reverse H\"older property: if $f:\Omega \to \Omega'$ is $K$-quasiconformal, then for some choice of $p>n$ depending only on $n$ and $K$ there exists $C>0$ so that the {\it reverse H\"older inequality}
\begin{equation}\label{eq:rhi}
\left( \frac1{|Q|} \int_Q |Df|^p \right)^{1/p} \le C \left( \frac1{|2Q|} \int_{2Q} |Df|^n \right)^{1/n}
\end{equation}
holds for cubes $Q$ with $\diam(Q) < \dist(Q,\partial\Omega)$ and $\diam(f(2Q)) < \dist(f(2Q),\partial\Omega')$. The {\it reverse H\"older higher integrability exponent} $p^\RH_n(K)$, 
introduced in \cite{ct:qc-assouad-spectrum}, is the supremum of all $p>n$ so that every $K$-quasiconformal map $f$ between $n$-dimensional domains satisfies \eqref{eq:rhi}. 
Obviously, $p^\RH_n(K) \le p^\Sob_n(K)$ and it is natural to conjecture a strong form of the higher integrability conjecture, namely, that $p^\RH_n(K)$ should equal $\tfrac{nK}{K-1}$ in all dimensions $n \ge 3$.\footnote{Astala's proof in fact shows that $p^\RH_2(K) = p^\Sob_2(K) = \tfrac{2K}{K-1}$ for all $K$.} The appearance of $p^\RH_n(K)$ in the Assouad-type distortion estimates is natural: Assouad dimension is a quantitative and scale-invariant notion which accounts for the behavior of the set at all locations and scales, so it is not unexpected that the higher integrability behavior of the map also needs to be controlled across all locations and scales. 

In \cite{ct:qc-assouad-spectrum} the authors also provided a corresponding conclusion for the Assouad spectrum: for any compact $E \subset \Omega$, $f:\Omega \to \Omega'$ $K$-quasiconformal, and $t>0$,
\begin{equation}\begin{split}\label{eq:qc-dim-A-theta-bound}
&\alpha(p^\RH_n(K)) \Phi(\dim_{A,\reg}^{K/(K+t)} E) \le \Phi(\dim_{A,\reg}^{1/(1+t)} f(E)) \\
&\qquad \qquad \le \alpha(p^\RH_n(K^{n-1}))^{-1} \Phi(\dim_{A,\reg}^{1/(1+Kt)} E).
\end{split}\end{equation}
An interesting novelty here is that the dilatation parameter $K$ features not only in the comparison constants, but also in the choice of the Assouad spectrum parameter in the source relative to the choice in the target. This feature is also present in \cite{fy:assouad-spectrum} in the context of distortion bounds for the Assouad dimension under bi-H\"older mappings.  We emphasise again that the results in \cite{ct:qc-assouad-spectrum} for Assouad spectrum and the Assouad dimension only hold for quasiconformal maps, and not for general (non-injective) Sobolev maps. 

Table \ref{tab:info-table} summarizes mapping properties of the various notions of metric dimension discussed in this paper.

\smallskip

\begin{center}\label{tab:info-table}
\begin{tabular}{||l|c|c|c|c|c||}
\hline\hline
Dimension & $\dim_H$ & ${\overline\dim}_\theta$ & ${\overline\dim}_B$ & $\dim_A^\theta$ & $\dim_A$ \\ [.1cm]
\hline\hline
Lipschitz contractivity & \checkmark & \checkmark & \checkmark & ${\mathcal X}$ & ${\mathcal X}$ \\  [.1cm]
\hline
Bi-Lipschitz invariance & \checkmark & \checkmark & \checkmark & \checkmark & \checkmark \\  [.1cm]
\hline
H\"older quasi-contractivity & \checkmark & \checkmark & \checkmark & ${\mathcal X}$ & ${\mathcal X}$ \\  [.1cm]
\hline
Sobolev distortion bounds & \checkmark & \checkmark & \checkmark & ${\mathcal X}$ & ${\mathcal X}$ \\  [.1cm]
\hline
QC distortion bounds & \checkmark & \checkmark & \checkmark & \checkmark & \checkmark \\  [.1cm]
\hline
Higher integrability exponent & $p^\Sob$ & $p^\Sob$ & $p^\Sob$ & $p^\RH$ & $p^\RH$ \\ [.1cm]
\hline\hline
\end{tabular}
\end{center}

\end{rem}

\section{Applications to quasiconformal classification}\label{sec:QC-classification}

In this section, we present several examples illustrating the use of intermediate dimensions in the quasiconformal classification problem.

Proposition \ref{thm:application2} indicated that Euclidean sets $E$ with quasi-Hausdorff dimension zero can be quasiconformally distinguished based on the convergence/divergence of $\lim_{\theta \to 0} \tfrac{d}{d\theta} {\overline\dim}_\theta E$. Moreover, even if both sets have the same convergence or divergence behaviour for this quantity, we can still draw conclusions about quasiconformal equivalence, however, those conclusions are now dilatation-dependent.

\begin{rem}\label{rem:K0}
Let $E,F \subset \R^n$, $n \ge 2$, be bounded sets with $\dim_{qH} E = \dim_{qH} F = 0$ and
\begin{equation}\label{eq:K0}
\liminf_{\theta \searrow 0^+} \frac{{\overline\dim}_\theta E}{{\overline\dim}_\theta F}  = \delta \in (0,1).
\end{equation}
Then no $K$-quasiconformal map $f:\R^n \to \R^n$ with $K<\delta^{-1}$ and $f(E) = F$ exists. Indeed, such a map would necessarily satisfy
${\overline\dim}_\theta F \le K {\overline\dim}_\theta E$ for all $0<\theta \le 1$ by the H\"older distortion estimate \eqref{eq:Holder-bound}. From \eqref{eq:K0} we deduce that $K \ge \delta^{-1}$.
\end{rem}

\begin{example}\label{ex:qc-equiv-of-e-s-sets}
Let
\begin{equation}\label{eq:ES}
E_s := \{ n^{-s} : n \in \N \} \cup \{0\}, \qquad s>0.
\end{equation}
Then $\dim_H E_s = 0$, ${\overline\dim}_B E_s = 1/(1+s)$, and $\dim_A(E_s) = 1$, whilst
\begin{equation}\label{eq:Fp-intermediate}
{\overline\dim}_\theta E_s  = \frac{\theta}{\theta+s}, \qquad \mbox{for $0<\theta<1$,}
\end{equation}
and
\begin{equation}\label{eq:Fp-Assouad-spectrum}
\dim_A^\theta E_s  = \min \bigg\{ 1 , \frac{1}{(1+s)(1-\theta)} \bigg\}, \qquad \mbox{for $0<\theta<1$.}
\end{equation}
See e.g.\ \cite[\S 2.1]{fr:interpolating-survey}. For $0<r \le s$, set $K = s/r$. The radial stretch map $f(z) = |z|^{1/K - 1}z$ satisfies $f(E_s) = E_r$ and is $K$-quasiconformal. By Remark \ref{rem:K0}, no map with a smaller quasiconformal dilatation can send $E_s$ to $E_r$. Note that the value of $\delta$ in this case is $r/s$. Observe that we cannot obtain this conclusion by considering instead the Assouad spectra of $E_r$ and $E_s$ and appealing to \cite{ct:qc-assouad-spectrum}. Roughly speaking, this is because the Assouad spectra of $E_s$ lie in the compact subinterval $[\tfrac1{1+s},1]$ of $(0,2)$. In fact, a (rather technical) computation reveals that using \eqref{eq:Fp-Assouad-spectrum} in conjunction with the mapping estimate \eqref{eq:qc-dim-A-theta-bound} fails to yield the sharp lower bound $K \ge \tfrac{s}{r}$, regardless of the choice of $t$.
\end{example}

In considering the statement of Corollary \ref{cor:application1}, the reader may wonder whether the equality of box-counting and Assouad dimension itself could be a quasiconformal invariant. In fact, this is not the case, as the following example indicates.

\begin{example}
For each $0<t<1$, there exists $E \subset \R^2$ compact with $t<{\overline\dim}_B E < \dim_A E$ and a quasiconformal map $f:\R^2 \to \R^2$ so that $f(E)$ is the standard Cantor set. In fact, the existence of such a set is an immediate consequence of Banaji and Rutar's attainability theorem for intermediate dimensions \cite{br:attainability} and MacManus' classification of planar quasiconformal images of the Cantor set \cite{McM}.

Given $0<t<1$, choose $s$ satisfying $t^2<s<t$ and note that $\tfrac{t}{s}<\tfrac{1-t}{t-s}$. Choose $\lambda>0$ and $\alpha<1$ so that $0<\lambda<s<t<\alpha<1$ and
$$
\frac{t-\lambda}{s-\lambda} < \frac{\alpha-t}{t-s}.
$$ 
Then $0<(s-\lambda)(\alpha-s) - 2(s-\lambda)(t-s) - (t-s)^2$ and hence
$$
t - s < \frac{\bigl((s-\lambda)+(t-s)\theta\bigr)\bigl((\alpha-s)-(t-s)\theta\bigr)}{(\alpha-\lambda)\theta} \qquad \forall \, 0<\theta<1.
$$
Consequently, the function $h:[0,1] \to \R$ defined by $h(\theta) = s+(t-s)\theta$ lies in the class $\cH(\lambda,\alpha)$ defined in \cite[Definition 1.3]{br:attainability}. By \cite[Theorem B]{br:attainability}, there exists a compact set $E \subset \R^2$ with lower dimension $\lambda$, Hausdorff dimension $s$, box-counting dimension $t$, and Assouad dimension $\alpha$.\footnote{The set $E$ moreover has ${\overline\dim}_\theta E = h(\theta)$ for all $0<\theta<1$, but we will not need this fact.} Since any set with Assouad dimension strictly less than one is uniformly disconnected \cite[Lemma 15.2]{ds:fractured-fractals}, the set $E$ so constructed is a compact, uniformly perfect, and uniformly disconnected subset of $\R^2$. By \cite[Theorem 3]{McM}, there exists $f:\R^2 \to \R^2$ quasiconformal so that $f(E)$ is the standard Cantor set.
\end{example}

Next, we give a quasiconformal classification result for which the full strength of our quasiconformal distortion estimate \eqref{eq:intermediate-quasiconformal} is needed.

\begin{example}\label{ex:BMcarpets-after-BK}
In \cite[Example 2.14]{bk:intermediate-BM-carpets}, the authors consider a pair of Bedford--McMullen carpets $E=E_\cF$ and $E'=E'_{\cF'}$ with the following data. Both $\cF$ and $\cF'$ are constructed on a grid of $m\times n = 32\times 243$ rectangles, with every column containing at least one entry. The IFS $\cF$ contains two columns with $27$ entries each, $11$ columns with $3$ entries each, and $19$ columns with a single entry each, whilst the IFS $\cF'$ contains one column with $27$ entries, six columns with nine entries each, and $25$ columns with a single entry each. Note that both IFS are comprised of $106$ elements, and it is possible to choose the rectangles in such a way that $E$ and $E'$ are both totally disconnected, and hence homeomorphic. 

From \eqref{eq:BM-dim-H} and \eqref{eq:BM-dim-B} we find that
\begin{equation}\label{eq:BMH}
\dim_H E = \dim_H E' = \log_{32}(57) \approx 1.16658
\end{equation}
and
\begin{equation}\label{eq:BMB}
{\overline\dim}_B E = {\overline\dim}_B E' = 1 - \log_3(2) + \log_{243}(106) \approx 1.21804.
\end{equation}
On the other hand, the intermediate dimension functions for $E$ and $E'$ are not the same, hence these two sets are not bi-Lipschitz equivalent. Moreover, considering how the ratio of dimensions varies with $\theta$, the authors of \cite{bk:intermediate-BM-carpets} find that the extremal choice of $\theta$ is $\theta_0 := \gamma^{-2} = (\log_3(2))^2 \approx 0.398$, and using this choice of $\theta_0$ they show that if $f:\R^2 \to \R^2$ is a homeomorphism with $f(E) = E'$ and $f^{-1}|_{E'}$ $\alpha$-H\"older continuous, then necessarily
\begin{equation}\label{eq:alpha-upper-bound}
\alpha \le \frac{d'}{d} < 1 - \delta = 0.9995.
\end{equation}
where $d = \dim_{\theta_0} E$ and $d' = \dim_{\theta_0} E'$. 

Using only the local $\tfrac1K$-H\"older continuity of $K$-quasiconformal maps, we immediately conclude that any $K$-QC homeomorphism of $\R^2$ sending $E$ to $E'$ must have
\begin{equation}\label{eq:Kold}
K > \frac1{1-\delta} \approx 1.0005.
\end{equation}
We obtain a better result by appealing to Corollary \ref{cor:qc_dim_dist_intermediate}. Using the exact value $p^\Sob_2(K) = 2K/(K-1)$
in \eqref{eq:intermediate-quasiconformal} and solving for $K$, we find
$$
K \ge \frac{d(2-d')}{d'(2-d)}.
$$
From \eqref{eq:alpha-upper-bound} we deduce that $d-d' > \delta \cdot d$, and hence
$$
\frac{d(2-d')}{d'(2-d)} = 1 + 2 \frac{d-d'}{2-d} > 1 + 2 \delta \frac{d}{2-d}.
$$
Since $d \ge \dim_H E$ and the function $t\mapsto t/(2-t)$ is increasing on the interval $(0,2)$, we conclude using \eqref{eq:BMH} that
\begin{equation}\label{eq:Knew}
K > 1 + 2 \delta \frac{\dim_H E}{2-\dim_H E} \approx 1.0014.
\end{equation}
\end{example}

\section{Quasiconformal reduction of intermediate dimension and conformal dimension}\label{sec:qc-lowering}

In this section, we prove Theorem \ref{th:main-1} and its application in Theorem \ref{th:applic-1}. We also present examples of Euclidean sets which verify the assumptions of Theorem \ref{th:applic-1}.

Theorem \ref{th:main-1}, along with its variant for Euclidean sets and global quasiconformal box-counting dimension described in the introduction, follow from the next theorem. Indeed, as observed in \cite{Kov}, every nonseparable metric space has infinite conformal Hausdorff dimension, so it suffices to restrict attention to separable metric spaces. Moreover, every separable metric space embeds isometrically into a separable Banach space. For example, the Banach--Fr\'echet--Mazur theorem \cite[Chapter XI, section 8]{Ban} exhibits an isometric embedding of any such space into $C([0,1])$, the Banach space of continuous real-valued functions on $[0,1]$ equipped with the uniform norm.

\begin{thm}\label{thm:lowering-intermediate}
Let $V$ be any separable Banach space, and let $E \subset V$ be a totally bounded set satisfying ${\overline\dim}_\theta E < 1$ for some $0<\theta<1$. Then for any $\eps>0$ there exists a quasisymmetric homeomorphism $f:V \to V$ such that ${\overline\dim}_\theta f(E) < \eps$.
\end{thm}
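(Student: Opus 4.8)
\emph{Proof strategy.} After the reductions preceding the statement we may assume $E\subset V$ is totally bounded with $\diam E\le 1$ and $\overline{\dim}_\theta E<s<1$. Since $\dim_H E\le\overline{\dim}_\theta E<1$ the set $E$ is totally disconnected; more to the point, for every $\kappa>0$ and every sufficiently small $\delta>0$ there is a cover $\{A_i\}$ of $E$ with $\delta^{1/\theta}\le\diam A_i\le\delta$ and $\sum_i(\diam A_i)^s<\kappa$, and since $s<1$ and the diameters are $<1$ this already forces $\sum_i\diam A_i<\kappa$: inside each intermediate-dimension window $E$ is extremely thin. The plan has two layers. First, a single \emph{pass}: I will construct a quasisymmetric homeomorphism $g\colon V\to V$ for which $\overline{\dim}_\theta g(E)\le s/(1+c\theta)$, where $c=c(\theta)\in(0,1)$ is explicit. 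Since $s/(1+c\theta)<1$, the image $g(E)$ satisfies the same hypotheses, so a finite number $K$ of iterated passes produces a quasisymmetric $f=g_K\circ\cdots\circ g_1$ (a finite composition of quasisymmetric maps, hence quasisymmetric) with $\overline{\dim}_\theta f(E)\le s/(1+c\theta)^K<\eps$ once $K$ is large enough.

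For the single pass, fix $\delta_{j+1}:=\delta_j^{1/\theta}$, so that the windows $W_j:=[\delta_{j+1},\delta_j]$ partition $(0,\delta_0]$ and have logarithmic width $\log(\delta_j/\delta_{j+1})=(\tfrac1\theta-1)\log(1/\delta_j)\to\infty$. In each $W_j$ fix a cover $\mathcal A_j=\{A^{(j)}_i\}$ as above with $\sum_i\diam A^{(j)}_i<\kappa_j$. The map $g$ is defined \emph{directly} --- not as an infinite composition --- as the identity at scales $\ge\delta_0$, together with, near each $A^{(j)}_i$, a radial ``power pinch'' contracting concentric balls $B(p^{(j)}_i,t)$ by a factor $\mu(t)$, where $\mu$ is a single continuous function of $t$ alone with $\mu(t)\approx\lambda_j:=\delta_j^{-c}$ for $t$ in the bulk of $W_j$, interpolated log-linearly across windows, and relaxing back to the identity away from $E$ (thereby expanding the gaps). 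Taking $c<\min\{1,\tfrac{1-\theta}{\theta}\}$ serves two purposes: first, $\log\lambda_j$ is then dominated by the logarithmic room available in $W_j$, so each local pinch is a genuine quasisymmetric map; second, $\lambda_j=o(\delta_j^{-1})$, which is exactly what makes the globally-defined $g$ quasisymmetric --- the ratio-distortion produced by the pinch grows like $\lambda_j$ but is confined to scales comparable to $\delta_j\to 0$, so it is absorbed by a single control function $\eta$, whose finiteness and monotonicity hold precisely because $\lambda_j\delta_j\to 0$. It is this last point that distinguishes the construction from bi-Lipschitz or bi-H\"older rearrangements, which are genuinely obstructed.

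To read off the dimension, track the image of $\mathcal A_j$: the family $\{g(A^{(j)}_i)\}$ covers $g(E)$ and (since $\lambda_j\ge 1$) its diameters form an admissible family for $\overline{\dim}_\theta$ at the scale $\lambda_j^{-1}\delta_j\to 0$, while, using $\diam A^{(j)}_i\ge\delta_j^{1/\theta}$,
\[
\sum_i\bigl(\diam g(A^{(j)}_i)\bigr)^{s'}\;\le\;C\lambda_j^{-s'}\sum_i\bigl(\diam A^{(j)}_i\bigr)^{s'}\;\le\;C\,\delta_j^{\,cs'-(s-s')/\theta}\,\kappa_j ,
\]
which tends to $0$ as $j\to\infty$ as soon as $s'>s/(1+c\theta)$. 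Since, by $\overline{\dim}_\theta E<s$, suitable source covers are available at every small scale, the resulting image covers are available at every small scale of $g(E)$; hence $\overline{\dim}_\theta g(E)\le s/(1+c\theta)$, and iterating completes the proof.

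The crux --- and the step I expect to require the most care --- is the rigorous construction of the pinch map $g$ as an honest quasisymmetric self-homeomorphism of $V$. The cover sets $A^{(j)}_i$ need not be disjoint, nor boundedly overlapping, since neither $V$ nor $E$ is assumed doubling, so the ``transition neighbourhoods'' of the individual radial pinches must be organised into one coherent global map. The resource is the thinness bound $\sum_i\diam A^{(j)}_i<\kappa_j$ with $\kappa_j$ at our disposal; one expects to use it either by clustering nearby cover sets and pinching the clusters by an inductive scheme, or --- exploiting that $V$ is infinite-dimensional --- by a preliminary quasisymmetric perturbation, with distortion close to $1$, moving the pieces into general position. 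A secondary point to nail down is the blockwise quasisymmetry estimate for $g$ with a single control function despite $\lambda_j\to\infty$; this is elementary but must be checked carefully, since it is the mechanism powering the whole argument.
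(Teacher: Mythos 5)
Your outline identifies the right structure — replace each admissible cover of $E$ by a family of local quasisymmetric "pinches" which shrink the cover sets nonlinearly, then read off a reduced $\overline{\dim}_\theta$ from the shrunken covers — and the arithmetic you do in the estimate $\sum_i(\diam g(A^{(j)}_i))^{s'}\lesssim\delta_j^{cs'+(s'-s)/\theta}\kappa_j$ is essentially correct. The trouble is that the step you flag yourself as "the crux" really is an unfilled gap, and it is precisely the step the paper outsources to a cited black box.

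The paper does not construct the pinch map by hand. It invokes Kovalev's result \cite[Corollary 4.2]{Kov} (Proposition~\ref{prop:Kov} in the text): given balls $B(x_{ij},r_{ij})$ covering $E$ with $\sum_j(1+\|x_{ij}\|)r_{ij}^s<2^{-i}$ for each $i$, there is a \emph{single} quasisymmetric $f\colon V\to V$ with $f(E)\subset\bigcap_i\bigcup_j B(f(x_{ij}),C(\eps,s)r_{ij}^{s/\eps})$. This proposition is the nontrivial content; Kovalev's construction carefully manages exactly what you identify as the difficulty — the balls need not be disjoint, boundedly overlapping, or otherwise organised, and $V$ need not be doubling. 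The remaining work in the paper's proof of Theorem~\ref{thm:lowering-intermediate} is elementary bookkeeping: convert the $\overline{\dim}_\theta E<s$ hypothesis into the input Kovalev requires, verify the image covering is admissible for $\overline{\dim}_{\theta'}$ with $\theta'$ slightly smaller than $\theta$, and then let $\theta'\to\theta$ using continuity of the intermediate dimension in $\theta$. Without proving (or citing) a result of Kovalev's type, your argument does not close. Your two suggested remedies are also shaky: the infinite-dimensional "move into general position" idea cannot be used because the theorem must hold for $V=\R^n$ (this is where it is actually applied, e.g.\ in Theorem~\ref{th:applic-1} and its examples); and a clustering/inductive scheme for possibly wildly overlapping, non-doubling families of balls is exactly the nontrivial content of \cite{Kov}, not something to be waved through.

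Two further points that differ from the paper and are worth noting. First, the iteration over passes is unnecessary: because Kovalev's proposition permits an arbitrarily aggressive power-law contraction $r\mapsto Cr^{s/\eps}$, a single pass already pushes $\overline{\dim}_\theta f(E)$ below any prescribed $\eps$; the dilatation of $f$ degenerates as $\eps\to 0$, but that is allowed since the statement asks only for \emph{some} quasisymmetric $f$. Your bounded-gain-per-pass iteration is a valid alternative scaffold but adds no generality. Second, your remark that the shrunken covers "are available at every small scale of $g(E)$" is asserted rather than argued: your fixed pinch $g$ is built from the discrete family $(\mathcal A_j)$, so it hands you admissible image coverings only at the discrete scales $\eta_j=\lambda_j^{-1}\delta_j$, not at all small scales as the definition of $\overline{\dim}_\theta$ requires. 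The paper sidesteps this because Kovalev's map produces image coverings parametrised continuously by the source scale $\delta$, and the residual mismatch (the constant $C(\eps,s)$) is absorbed by working with a slightly smaller $\theta'$ and invoking continuity in $\theta$. Some analogous patch would be needed in your scheme and is currently missing.
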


In the proof, we use the following result, see \cite[Corollary 4.2]{Kov}. Theorem \ref{thm:lowering-intermediate} is a straightforward consequence of Proposition \ref{prop:Kov}, apart from some technical but essentially elementary manipulations involving the interpolation parameter $\theta$.

\begin{prop}[Kovalev]\label{prop:Kov}
Let $V$ be a separable Banach space, $E \subset V$, and $0<s<1$. Assume that there exist balls $\{B(x_{ij},r_{ij})\}$, $i,j \ge 1$, so that for each $i$,
$$
E \subset \cup_j B(x_{ij},r_{ij}) \quad \mbox{and} \quad \sum_j (1+||x_{ij}||) r_{ij}^s < 2^{-i}.
$$
Then for any $\eps>0$ there exists a quasisymmetric homeomorphism $f:V \to V$ so that
$$
f(E) \subset \bigcap_i \bigcup_j B(f(x_{ij}), R_{ij})
$$
where $R_{ij} = C(\eps,s)r_{ij}^{s/\eps}$.
\end{prop}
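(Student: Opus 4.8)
This is Kovalev's theorem \cite[Corollary 4.2]{Kov}; the plan is to reconstruct the argument of \cite[Section~4]{Kov}. The strategy is to build $f$ as an infinite composition of elementary quasisymmetric self-maps of $V$, one associated with each ball in the given collection, and to exploit the weighted summability hypothesis both to make the composition converge to a homeomorphism and to force the limit to be quasisymmetric.

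First I would collect the balls $B(x_{ij},r_{ij})$ over all pairs $(i,j)$ into a single sequence $(B(p_k,\rho_k))_{k\ge1}$, so that the hypothesis gives $\sum_{k\ge1}(1+\|p_k\|)\rho_k^{s}\le\sum_{i\ge1}2^{-i}=1$; in particular $\rho_k<1$ for every $k$ and $\sum_k\rho_k<\infty$. Fix $\beta:=s/\eps$ (one may assume $\eps<s$, since otherwise the identity map already works). For each $k$ I would construct a radial-type homeomorphism $g_k\colon V\to V$, essentially a cut-off power map centered at $p_k$, whose effect is to compress the ball $B(p_k,\rho_k)$ by the factor $\rho_k\mapsto\rho_k^{\beta}$ while tapering back to the identity away from $p_k$. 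The construction uses the basic fact that radial power maps $x\mapsto|x|^{\gamma-1}x$ are quasisymmetric on every normed space with modulus depending only on $\gamma$; the delicate point is to arrange the cut-off and compression so that the maps $g_k$ are simultaneously quasisymmetric, close to the identity, and ``balanced'' against one another, this balance being exactly what is measured by the quantities $(1+\|p_k\|)\rho_k^{s}$.

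Next I would form the finite compositions $f_N:=g_N\circ\cdots\circ g_1$ and pass to the limit. Since $\sum_k\rho_k<\infty$ and each $g_k$ is close to the identity, $(f_N)$ converges uniformly on bounded subsets of $V$ to a continuous map $f\colon V\to V$, and the same estimates applied to the inverse maps show that $f$ is a homeomorphism. The hard part will be to verify that $f$ is quasisymmetric, since an infinite composition of quasisymmetric maps need not be quasisymmetric even when it converges. This calls for a quantitative composition estimate: for a fixed triple $x,a,b\in V$ with $x\ne b$, only those $g_k$ whose region of activity is comparable in scale and position to $\{x,a,b\}$ perturb the ratio $|x'-a'|/|x'-b'|$ (the primes denoting the running images) by a nontrivial factor, and the product of all these perturbations is finite, the tail of $\sum_k(1+\|p_k\|)\rho_k^{s}$ controlling the convergence, the weight $1+\|p_k\|$ absorbing the dependence on the distance of $B(p_k,\rho_k)$ from the origin, and the constraint $s<1$ supplying the geometric slack. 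This estimate is the technical heart of Kovalev's argument and I expect it to be the main obstacle.

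Finally, granting that $f$ is an $\eta$-quasisymmetric homeomorphism of $V$, I would establish the covering conclusion by tracking a fixed ball $B(x_{ij},r_{ij})=B(p_k,\rho_k)$ through $f=\lim_N f_N$: the factors $g_1,\dots,g_{k-1}$ distort it by a bounded factor (being uniformly quasisymmetric near it), the factor $g_k$ compresses it into a ball of radius $\lesssim\rho_k^{\beta}$, and the later factors $g_{k+1},g_{k+2},\dots$ --- uniformly quasisymmetric and acting at scales either much larger than, or far from, this tiny set --- carry it to a set of comparable diameter around $f(x_{ij})$. Hence $\diam f(B(x_{ij},r_{ij}))\le C(\eps,s)\,r_{ij}^{s/\eps}$ with $f(x_{ij})$ an approximate center, so $f(B(x_{ij},r_{ij}))\subset B(f(x_{ij}),R_{ij})$ for $R_{ij}:=C(\eps,s)\,r_{ij}^{s/\eps}$. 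Since $E\subset\bigcup_j B(x_{ij},r_{ij})$ for every $i$, applying $f$ and intersecting over $i$ yields $f(E)\subset\bigcap_i\bigcup_j B(f(x_{ij}),R_{ij})$, as desired.
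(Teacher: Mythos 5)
The paper does not actually prove this proposition; it is quoted verbatim from Kovalev \cite[Corollary 4.2]{Kov}, so the only comparison available is with Kovalev's own argument, and your reconstruction is not that argument. Kovalev does not build $f$ as an infinite composition of independent radial compressions. He deforms the metric globally: starting from the snowflaked distance $\|x-y\|^s$ truncated on each covering ball (so that two points lying in some $B(x_{ij},r_{ij})$ are declared to be at distance $\lesssim r_{ij}^{s}$, suitably rescaled), he passes to the largest metric dominated by this truncation via a chain infimum, uses the subadditivity of $t\mapsto t^s$ for $s<1$ together with the weighted summability $\sum_j(1+\|x_{ij}\|)r_{ij}^s<2^{-i}$ to show the chain metric does not degenerate, and then invokes a separate realization theorem to produce a single quasisymmetric self-homeomorphism of $V$ pulling back the norm to (a power of) the deformed metric; the weights $1+\|x_{ij}\|$ are needed in that realization step, which is where the Banach space structure enters. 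In this scheme every ball is shrunk to diameter $\lesssim r_{ij}^{s/\eps}$ \emph{simultaneously and only once}, however the balls overlap or nest.

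Your composition scheme has a genuine gap at exactly the step you flag as the main obstacle, and I do not believe it can be closed in the form stated. Since $E$ is covered by balls from \emph{every} generation $i$, each point of $E$ lies in infinitely many of the balls $B(p_k,\rho_k)$, typically with radii tending to zero. Each associated $g_k$ is, on its region of activity, a power compression with exponent $\beta=s/\eps>1$, and composing the ones that are nested around a common point multiplies the exponents: after $N$ of them the local radial profile behaves like $t\mapsto t^{\beta^N}$ on the relevant range of scales. For a concrete failure, take one ball per generation centred at a fixed point $x_0\in E$ with $r_{i1}^s=2^{-i-1}$ (the hypothesis is satisfied); then the limit map has radial profile $\psi$ near $x_0$ with $\psi(2t)/\psi(t)\to\infty$ as $t\to 0$, so the quasisymmetry ratio with $x=x_0$, $|a-x_0|=2t$, $|b-x_0|=t$ is unbounded and $f$ is not quasisymmetric. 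The summability of $\sum_k(1+\|p_k\|)\rho_k^s$ bounds the total ``$s$-content'' of the balls but does nothing to prevent this nesting, so ``the product of all these perturbations is finite'' is not justified and is in fact false for the maps as you defined them. (There is also a secondary coordination issue: your $g_k$ are defined relative to the original balls, while earlier factors have already displaced and shrunk those balls.) To repair this one must coordinate the compressions so that each region of space is compressed by the required total amount only once --- which is precisely what Kovalev's chain-metric construction accomplishes and what an independent composition cannot.
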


\subsection{Proof of Theorem \ref{thm:lowering-intermediate} and Theorem \ref{th:main-1}}
Let $E \subset V$ be a totally bounded set such that ${\overline\dim}_\theta E < s$ for some $s<1$. Then $E \subset B(0,R)$ for some $R>0$, and for each integer $i$ there exists $\delta_0>0$ such that for all $0<\delta\le\delta_0$ there exists a collection of balls $\{B(x_{ij},r_{ij}\}_{j\ge 1}$ which covers $E$ and has $\delta^{1/\theta} \le r_{ij} \le \delta$ and $\sum_j r_{ij}^s < 2^{-i-1}/(1+R)$.

For fixed $0<\eps$, Proposition \ref{prop:Kov} implies that there exists a quasisymmetric map $f:V \to V$ so that $f(E) \subset \cap_i \cup_j B(f(x_{ij}), R_{ij})$ where $R_{ij} = C(\eps,s)r_{ij}^{s/\eps}$. Fixing an auxiliary integer $\nu>\tfrac1\theta$ we choose $\delta_0$ as above and set 
$$
\delta_0' := \min\{\delta_0^{s/\eps} C(\eps,s),C(\eps,s)^{-(1-\theta)(\nu\theta-1)}\}.
$$
For an arbitrary $0<\delta' \le \delta_0'$, define $\delta$ by the equation $\delta' = C(\eps,s)\delta^{s/\eps}$. Associated to this value of $\delta$ we have a collection of balls $\{B(x_{ij},r_{ij})\}_{j\ge 1}$ which covers $E$ and has the two properties listed above. Note that the image covering $\{B(f(x_{ij}), R_{ij})\}_{j \ge 1}$ also satisfies
\begin{equation}\label{eq:intermediate-image}
C(\eps,s) \delta^{\tfrac{s}{\eps\theta}} \le R_{ij} \le C(\eps,s)\delta^{\tfrac{s}{\eps}}
\end{equation}
and
\begin{equation}\label{eq:intermediate-image-sum}
\sum_j R_{ij}^{\eps} = C(\eps,s) \sum_j r_{ij}^s < C(\eps,s) \frac{2^{-i-1}}{1+R} \to 0 \quad \mbox{as $i \to \infty$.}
\end{equation}
In view of the choice of $\delta_0'$ above, we have
\begin{equation}\label{eq:C-delta-1}
C(\eps,s)\delta^{s/\eps} = \delta' \leq \delta_0' \le C(\eps,s)^{-(1-\theta)(\nu\theta-1)}.
\end{equation}
From \eqref{eq:C-delta-1} we obtain
$$
\delta^{s/\eps} \le C(\eps,s)^{\nu \theta (\theta - 1 - \tfrac1\nu)}
$$
and hence
$$
C(\eps,s) \delta^{s/\eps} \le C(\eps,s)^{\theta - \tfrac1\nu} \delta^{(s/\eps) \cdot (\theta-\tfrac1\nu)/\theta},
$$
which in turn implies that
\begin{equation}\label{eq:C-delta-2}
\left( C(\eps,s) \delta^{s/\eps} \right)^{\tfrac1{\theta-\nu^{-1}}} \le C(\eps,s) \delta^{\tfrac{s}{\eps\theta}}.
\end{equation}
Applying \eqref{eq:C-delta-2} with $\theta' = \theta - \tfrac1\nu$ and recalling \eqref{eq:intermediate-image} we have
\begin{equation}\label{eq:intermediate-image-2}
\left( C(\eps,s) \delta^{s/\eps} \right)^{1/\theta'} \le R_{ij} \le C(\eps,s)\delta^{s/\eps}.
\end{equation}
From \eqref{eq:intermediate-image-2} and \eqref{eq:intermediate-image-sum}, after passing to the limit as $i \to \infty$, we conclude that ${\overline\dim}_{\theta-\nu^{-1}} f(E) \le \eps$. Using continuity of intermediate dimension, it then follows that ${\overline\dim}_\theta f(E) \le \eps$. The proof is complete. 

\subsection{Proof of Theorem \ref{th:applic-1}}
Let $(X,d)$ be a doubling metric space with $\dim_{qH} X < 1$ and let $\theta < 1$. Choose $\tilde\theta>0$ so that ${\overline\dim}_{\tilde\theta} X < 1$. Assume that
\begin{equation}\label{eq:cdimAthetaX}
C\dim_A^\theta X \ge \delta > 0.
\end{equation}
Set $\eps = \tilde\theta(1-\theta) \delta$, and, using Theorem \ref{thm:lowering-intermediate},  choose a quasisymmetric homeomorphism $f:X \to Y$ onto a metric space $(Y,d')$ so that ${\overline\dim}_{\tilde\theta} Y < \eps$. Using \eqref{eq:assouad-spectrum-inequality} and \eqref{eq:BR-cor} we obtain
$$
\dim_A^\theta Y \le \frac{\overline\dim_B Y}{1-\theta} \le \frac{\overline\dim_{\tilde\theta} Y}{\tilde\theta(1-\theta)} < \delta
$$
which contradicts \eqref{eq:cdimAthetaX}.
 
\subsection{Further examples and applications}

\begin{rem}
We emphasise that the conclusion in Theorem \ref{th:applic-1} only states that the conformal Assouad spectrum is equal to zero for each $\theta<1$. It does not say anything about the conformal quasi-Assouad dimension. As remarked in the introduction, although we know that $\dim_A^\theta X$ increases to $\dim_{qA} X$ as $\theta \nearrow 1$, it is not clear how the infimum over all quasisymmetric images interacts with the limit in the parameter $\theta$. We comment further on this issue in Question \ref{q:continuity-of-conformal-Assouad-spectrum}.
\end{rem}

\begin{example}\label{ex:binder-hakobyan-li}
Let $E \subset \R^2$ be a Bedford--McMullen carpet as in section \ref{subsec:bm-carpets}. If $M=m$ (i.e., every column contains at least one rectangle), we say that $E$ is {\em full}. Next, if $N_i = N/M$ is constant for $i=1,\ldots,M$ (i.e., every nonempty column contains an equal number of rectangles), we say that $E$ is a {\em Bedford--McMullen carpet with uniform fibres}. It can easily be deduced from the formulas in section \ref{subsec:bm-carpets} that $\dim_H E = \dim_A E$, and hence all of the notions of dimension considered in this paper agree, for carpets with uniform fibres. 

Binder, Hakobyan and Li \cite{BHL} have shown that full Bedford--McMullen carpets with uniform fibres are minimal for conformal Hausdorff dimension. Consider a full Bedford--McMullen carpet with uniform singleton fibres ($N_i = 1$ for all $i=1,\ldots,M$). Then $\dim_A E = 1$. Moreover, we may choose the rectangles so that $E$ is totally disconnected. The result of \cite{BHL} implies that $C\dim_H E = 1$, and hence all notions of dimension and conformal dimension considered in this paper are equal to one. See also Question \ref{q:bhl-consequence}.
\end{example}

\begin{rem}
In \cite[Corollary 2]{bishop-tyson}, Bishop and the second named author provide an example, for each $1 \le s < n$, of a totally disconnected set $E \subset \R^n$ such that $C\dim_H E = \dim_H E = s$.
\end{rem}

\begin{example}\label{ex:percolation-example}
Consider samples $F$ of the Mandelbrot percolation process as in section \ref{subsec:percolation}. As noted in that section, the almost sure values of the dimensions of $F$ are $n - \log_M(1/p)$ (for Hausdorff dimension, intermediate dimension, box-counting dimension, and Assouad spectrum) and $n$ (for Assouad dimension). Rossi and Suomala \cite{rossi-suomala} showed that $F$ has an almost sure constant value for conformal Hausdorff dimension, and that
$$
C \dim_H F < \dim_H F \quad \mbox{a.s.}
$$
regardless of the value of $\dim_H F$. If $p<M^{1-n}$ then $\dim_H F < 1$ and one immediately has $C\dim_H F = 0$ by Kovalev's theorem. Moreover, since $\dim_{qH} F = \dim_H F$, we obtain as a consequence of Theorem \ref{th:applic-1} that $C\dim_A^\theta F = 0$ for each $\theta<1$. (Note that this conclusion was already known as a result of Kovalev's theorem in the box-counting dimension context, in combination with \eqref{eq:assouad-spectrum-inequality}.) On the other hand, $C\dim_A F = n$ a.s. See Questions \ref{q:percolation-1} and \ref{q:percolation-2} for some related open questions.
\end{example}

\begin{example}\label{ex:product-f-p-example}
Let $Z \subset \R^{n-1}$ be any set with $\dim_{qH} Z < 1$ and ${\overline\dim}_B Z \ge \tfrac{s}{s+1}$ for some $s>0$. Then $E := Z \times E_s$ has box-counting dimension at least one and has vanishing conformal Assouad spectrum.

\smallskip

For the proof, we observe that
\begin{equation}\label{eq:dim-B-product}
{\overline\dim}_B E = {\overline\dim}_B Z + \frac{1}{s+1},
\end{equation}
\begin{equation}\label{eq:dim-theta-product}
{\overline\dim}_\theta E \le {\overline\dim}_\theta Z + \frac{\theta}{s+\theta},
\end{equation}
and
\begin{equation}\label{eq:dim-qH-product}
\dim_{qH} E \le \dim_{qH} Z.
\end{equation}
To see why \eqref{eq:dim-B-product} holds true, we note that the upper bound follows from \cite[(7.5)]{fal:fractal-geometry}, whilst the lower bound follows from \cite[Theorem 2.4]{RS}. For \eqref{eq:dim-theta-product}, we appeal to \cite[Proposition 2.5]{ffk:intermediate}. Finally, \eqref{eq:dim-qH-product} follows from \eqref{eq:dim-theta-product} in the limit as $\theta \searrow 0$.

Using \eqref{eq:dim-B-product} and the assumption ${\overline\dim}_B Z \ge \tfrac{s}{s+1}$, we conclude that ${\overline\dim_B} E \ge 1$. On the other hand from \eqref{eq:dim-qH-product} we deduce that $\dim_{qH} E < 1$. Theorem \ref{th:applic-1} implies that $C\dim_A^\theta E = 0$ for every $0<\theta<1$. 
\end{example}

\begin{rem}
Under the stated assumptions in Example \ref{ex:product-f-p-example}, the set $E = Z \times E_s$ admits among its weak tangents any space of the form $Z_\infty \times \R$, where $Z_\infty$ is a weak tangent of $Z$. It follows from \cite[Proposition 4.1.11 and Proposition 6.1.7]{MT} that $E$ is minimal for conformal Assouad dimension.
\end{rem}

\begin{rem}
In Example \ref{ex:product-f-p-example}, we may choose $Z \subset \R^{n-1}$ to be any Ahlfors $t$-regular set with $\tfrac{s}{s+1} \le t < 1$. Then ${\overline\dim}_B Z = {\overline\dim}_\theta Z = t$ for all $0<\theta<1$ and we conclude that $Z \times E_s$ has vanishing conformal Assouad spectrum.
\end{rem}

\begin{rem}
Suppose that $0<s\le 1$. Then $\tfrac{s}{s+1} \le \tfrac{1}{s+1}$ and so the set $Z = E_s$ itself satisfies the conditions in Example \ref{ex:product-f-p-example}. We conclude that $E_s \times E_s$ has vanishing conformal Assouad spectrum, for any $0<s \le 1$.
\end{rem}

\begin{rem}
On the other hand, for any $s>0$ let $Z = E_{1/s}$. Then $\dim_{qH} Z = 0$ and $\dim_B Z = \tfrac{s}{s+1}$. We conclude that $E_{1/s} \times E_s$ has vanishing conformal Assouad spectrum, for any $s>0$.
\end{rem}

\begin{example}\label{ex:CIFS}
Next, we give an example derived from the study of intermediate dimensions of invariant sets of (possibly infinite) conformal iterated function systems (CIFS). Let $n \ge 2$, set $d = n-1$, and let $0<s<\infty$. Define
$$
G_{s,d} := \left\{ \frac{x}{|x|^2} \, : \, x \in \{1^s,2^s,3^s,\ldots\}^d \right\} \subset \R^{n-1}.
$$
According to \cite[Proposition 3.8]{BF}, $\overline\dim_\theta G_{s,d} = \tfrac{d\theta}{s+\theta}$ for all $0<\theta<1$. In particular, $\dim_{qH} G_{s,d} = 0$ and $\overline\dim_B G_{s,d} = \tfrac{d}{s+1} > \tfrac{t}{t+1}$ provided that
\begin{equation}\label{eq:s-p}
s < (n-2) + \frac{n-1}{t}.
\end{equation}
We conclude that $G_{s,n-1} \times E_t \subset \R^n$ has vanishing conformal Assouad spectrum provided that \eqref{eq:s-p} holds.

\medskip

Furthermore, let $0<s\le n-2$ and consider a set of contracting similarities on $\R^{n-1}$ with fixed points in the set $G_{s,d}$ defined above. Assume that the contraction ratios are so small that the system defines a CIFS whose limit set $Z$ satisfies $\dim_H Z < 1$. Then $\overline\dim_\theta Z = \max \{ \dim_H Z, \tfrac{d\theta}{s+\theta} \}$ by \cite[Theorem 3.5]{BF} which implies that $\theta \mapsto \dim_\theta Z$ is continuous at $\theta = 0$ and $\dim_{qH} Z = \dim_H Z < 1$. Moreover, $\overline\dim_B Z = \tfrac{d}{s+1} \ge 1$ since $s \le n-2$. We conclude that $Z \times E_t$ has vanishing conformal Assouad spectrum for any CIFS limit set $Z$ as above.
\end{example}

\section{Sharpness of dimension distortion estimates}\label{sec:sharpness-section}

In this section, we demonstrate the sharpness of the estimates in \eqref{eq:intermediate-quasiconformal-2d} and \eqref{eq:intermediate-Sobolev}. An important aspect of these constructions is that they apply simultaneously for all values of the intermediate dimension parameter $\theta$. 

\subsection{Sharpness of the planar quasiconformal distortion estimates for intermediate dimension}

This subsection is devoted to the proof of the following theorem, which shows that the estimate in \eqref{eq:intermediate-quasiconformal-2d} is sharp in the strong sense described in the previous paragraph.

\begin{thm}\label{thm:theta-qc-sharpness}
For each $K \ge 1$ there exists a compact set $E \subset \C$ with $\dim_H E = 0$ and a $K$-quasiconformal mapping $g: \C \to \C$ so that
$$
\frac1{\overline\dim_\theta g(E)} - \frac12 = \frac1K \left( \frac1{\overline\dim_\theta E} - \frac12 \right), \qquad \forall \, 0<\theta\le 1.
$$
\end{thm}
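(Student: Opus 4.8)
The plan is to exhibit an explicit set $E$ together with the radial stretch as the quasiconformal map $g$. The key point is that, although the radial stretch is far from sharp for Hausdorff dimension distortion (which is why Astala and Fuhrer--Ransford--Younsi need holomorphic motions and inf-harmonicity), it does realise the extremal bound in Corollary~\ref{cor:qc_dim_dist_intermediate-2d} for box-counting and intermediate dimension, provided $E$ is chosen to have the correct ``two-dimensional'' structure at every scale; the price of allowing $\dim_H E = 0$ is exactly what makes this elementary. Fix $K\ge 1$; the case $K=1$ is trivial (take $g=\Id$), so assume $K>1$. Choose a parameter $\gamma>0$ and set
$$
E := \{0\} \cup \bigcup_{k=1}^\infty E_k, \qquad A_k := \{z\in\C : 2^{-k-1}\le |z|\le 2^{-k}\},
$$
where $E_k$ is a maximal $2^{-k(1+\gamma)}$-separated subset of the annulus $A_k$. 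Then $\#E_k \asymp 2^{2\gamma k}$, the points of $E_k$ are $\asymp 2^{-k}$-dense in $A_k$, and $E$ is a countable compact set, so $\dim_H E = 0$. For the map take $g(z) := |z|^{1/K-1}z$, the $K$-quasiconformal self-homeomorphism of $\C$ fixing $0$ recalled in Section~\ref{subsec:quasiconformal}.

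Next I would compute $\overline\dim_\theta E = \dfrac{2\gamma\theta}{1+\gamma\theta}$ for every $0<\theta\le 1$. Using Proposition~\ref{prop:intermediate-dimension-via-dyadic-cubes}, this amounts to optimising, over coverings of $E$ by dyadic cubes of diameters in $[\delta^{1/\theta},\delta]$, the sum $\sum_i(\diam A_i)^s$. The covering function of each piece satisfies $N(E_k,r)\asymp \max\{1,\ \min\{(2^{-k}/r)^2,\ 2^{2\gamma k}\}\}$, so there are three natural strategies for covering $E_k$: resolve its individual points at the small scale $\delta^{1/\theta}$ (cost $\asymp 2^{2\gamma k}\delta^{s/\theta}$), perform a two-dimensional cover at the large scale $\delta$ (cost $\asymp 2^{-2k}\delta^{s-2}$), or, once $2^{-k}\le\delta$, absorb the entire tail of $E$ into a single cube. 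Summing the pointwise minimum of these over $k$ and locating the value of $s$ at which the governing exponent vanishes yields the stated formula; the upper bound is this explicit covering, while the matching lower bound follows from a standard mass-distribution estimate (equivalently, one may run the whole computation through the capacitary characterisation $\overline\dim_\theta E = \overline\dim_\theta^n E$ of Proposition~\ref{prop:box-profile}). In particular $\tfrac1{\overline\dim_\theta E}-\tfrac12 = \tfrac1{2\gamma\theta}$.

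I would then transfer this computation to $g(E)$. Consecutive annuli $A_k$ are adjacent and geometrically comparable, and on each $A_k$ the map $g$ is bi-Lipschitz, with constant depending only on $K$, to the similarity $z\mapsto 2^{-k(1/K-1)}z$: in polar coordinates $g$ acts as $(\rho,\phi)\mapsto(\rho^{1/K},\phi)$, so the singular values of $Dg$ are $\tfrac1K\rho^{1/K-1}$ and $\rho^{1/K-1}$, whose ratio is $K$ and whose common size varies by at most a factor $2$ across $A_k$. Hence $g(A_k)$ is an annulus of size $\asymp 2^{-k/K}$ containing $\#E_k\asymp 2^{2\gamma k}$ points that are $\asymp 2^{-k(\gamma+1/K)}$-separated and $\asymp 2^{-k/K}$-dense in it. Reindexing by $k' := k/K$ exhibits $g(E)$, annulus-by-annulus and scale-by-scale, as a set of exactly the same form as $E$ but with the multiplicity parameter $\gamma$ replaced by $\gamma' := K\gamma$; since intermediate dimension is insensitive both to bounded metric distortions and to the precise geometric progression of the annular radii, the covering computation of the previous paragraph applies verbatim and gives $\overline\dim_\theta g(E) = \tfrac{2K\gamma\theta}{1+K\gamma\theta}$. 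Therefore
$$
\frac1{\overline\dim_\theta g(E)}-\frac12 \;=\; \frac1{2K\gamma\theta} \;=\; \frac1K\left(\frac1{\overline\dim_\theta E}-\frac12\right) \qquad \forall\,0<\theta\le 1,
$$
which is the assertion; applying the same example to $g^{-1}$ (also $K$-quasiconformal in the plane) shows the upper bound in Corollary~\ref{cor:qc_dim_dist_intermediate-2d} is sharp as well.

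The main obstacle is the exact evaluation of $\overline\dim_\theta E$ and its faithful transfer to $g(E)$: the optimisation over the three covering regimes is elementary but fiddly and must be done uniformly in $\theta$; the lower bound requires an honest mass-distribution (or capacity) argument rather than merely exhibiting a covering; and some care is needed to verify that the non-uniform---but locally $K$-bi-Lipschitz---distortion of the radial stretch, both across the annuli and within each one, does not perturb the scale-by-scale covering counts. Everything else reduces to the algebraic identity $\Phi(\tau_p(s)) = \alpha(p)\Phi(s)$ with $\alpha(p^\Sob_2(K)) = 1/K$, equivalently to the direct substitution above.
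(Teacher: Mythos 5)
Your proposal is correct in substance, and it is closely related to, but not identical with, the paper's argument. The paper takes $E = G_{s,2}^\alpha = f_\alpha(\{1^s,2^s,\ldots\}^2)$, the image of a polynomially growing grid under the orientation-reversing radial stretch $f_\alpha(x) = |x|^{-1-\alpha}x$, and uses the same test map $g(y)=|y|^{1/K-1}y$. The whole point of that choice is the semigroup identity $g\circ f_\alpha = f_\beta$ with $\beta=\alpha/K$, which makes the transfer to $g(E)$ immediate once the single formula $\dim_\theta G_{s,n}^\alpha = \tfrac{n\theta}{\theta+s\alpha}$ (Proposition \ref{prop:G-s-n-alpha}) is proved; that proposition is the real work, and in it the lower bound is also obtained from the Banaji--Rutar estimate \eqref{eq:BR} after showing $\dim_L=0$, $\dim_A=n$, and a box-dimension lower bound. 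Your approach replaces this set by a hand-built one -- maximal $2^{-k(1+\gamma)}$-separated nets in dyadic annuli -- which is arguably more transparent as a picture, and it uses the same radial stretch. The trade-off is that you must re-derive the structure of $g(E)$ annulus by annulus (the geometric ratio of the annuli changes from $2$ to $2^{1/K}$, the separation becomes $\asymp 2^{-k(\gamma+1/K)}$, and the counts are preserved) rather than deduce it from a closed-form identity. Both arguments come down to the same two ingredients: the Banaji--Rutar lower bound applied to a non-doubling, non-uniformly-perfect set, and an explicit three-regime covering for the upper bound; this is exactly the phenomenon that the paper abstracts in Theorem \ref{th:banaji-rutar-remark}, which you effectively reprove inline without invoking.

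Two small remarks so you do not trip on them when you fill in the details. First, a maximal $r$-separated set is $r$-dense, not comparable to the annular width; where you wrote that $E_k$ is ``$\asymp 2^{-k}$-dense in $A_k$'' you really mean $\asymp 2^{-k(1+\gamma)}$-dense -- this is what guarantees that $N(E_k,r)\asymp\min\{(2^{-k}/r)^2,2^{2\gamma k}\}$ from below as well as above, which you need for the lower bound on $\overline\dim_B E$ feeding into Banaji--Rutar. Second, the local bi-Lipschitz comparability of $g$ to a similarity holds with constant depending only on $K$ provided one works on a slightly fattened annulus (or equivalently for pairs of points at distance $\lesssim 2^{-k}$), since the singular values of $Dg$ are $\tfrac1K|z|^{1/K-1}$ and $|z|^{1/K-1}$ and vary by a bounded factor across $A_k$; this is enough to transport both the separation and the $r$-covering counts. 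With those points made precise, the derivation of $\overline\dim_\theta E = \tfrac{2\gamma\theta}{1+\gamma\theta}$ and $\overline\dim_\theta g(E)=\tfrac{2K\gamma\theta}{1+K\gamma\theta}$ goes through and gives the claimed identity.
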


Recall that the sharp $K$-dependent distortion estimates for Hausdorff dimension under planar quasiconformal mappings were obtained by Astala \cite{ast:2d-higher}. An example demonstrating the sharpness of those estimates can be found in \cite[Theorem 1.4]{ast:2d-higher}. It is interesting to note that the example which we provide here in the intermediate dimensions case is conceptually much simpler than the one given in \cite{ast:2d-higher}.

We first describe the construction of the relevant set $E$. For this construction, the restriction to dimension two is not necessary. The set in question is the image of the $n$-fold product $\{ 1^s,2^s,3^s,\ldots \}^n$ ($s>0$) by an orientation reversing quasiconformal radial stretch mapping. This is a quasiconformal analog of the set considered in \cite[Proposition 3.8]{BF} and Example \ref{ex:CIFS}. For different choices of the radial stretch mapping, these sets are themselves quasiconformally equivalent via another radial stretch map. In the planar case, this combination of set and mapping suffices to verify Theorem \ref{thm:theta-qc-sharpness}.

\smallskip

For $0<\alpha < 1$, let $f_\alpha:\R^n \setminus \{0\} \to \R^n \setminus \{0\}$ be given by
$$
f_\alpha(x) := |x|^{-1-\alpha} x.
$$
We recall that $f_\alpha$ is a $K_\alpha$-quasiconformal mapping for each such $\alpha$, with $K_\alpha = \tfrac1\alpha$. In fact,
$$
Df_\alpha(x) = |x|^{-1-\alpha} \left( \Id_n - (1+\alpha) ( \frac{x}{|x|} \otimes \frac{x}{|x|} ) \right)
$$
and hence $Df_\alpha(x)$ has singular values $|x|^{-1-\alpha}$ (with multiplicity $n-1$) and $\alpha |x|^{-1-\alpha}$ with multiplicity one. Thus
$$
\frac{||Df_\alpha||^n}{\det Df_\alpha} = \frac{|x|^{(-1-\alpha)n}}{\alpha |x|^{(-1-\alpha)n}} = \frac1\alpha
$$
and so $f$ is $K$-quasiconformal with $K = \tfrac1\alpha$.
 
\begin{prop}\label{prop:G-s-n-alpha}
Fix $s>0$ and $0<\alpha<1$ and set
$$
G_{s,n}^\alpha := f_\alpha ( \{ 1^s,2^s,3^s,\ldots \}^n ).
$$
Then
\begin{equation}\label{eq:G-s-n-alpha}
\dim_\theta G_{s,n}^\alpha = \frac{n\theta}{\theta+s\alpha}, \qquad \forall \, 0<\theta \le 1.
\end{equation}
\end{prop}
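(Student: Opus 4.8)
\emph{Proof plan.} Write $f_1(x)=|x|^{-2}x$ for the standard inversion and $g_\alpha(y)=|y|^{\alpha-1}y$ for the radial stretch with $|g_\alpha(y)|=|y|^\alpha$. A one-line computation shows $f_\alpha=g_\alpha\circ f_1$, since $g_\alpha(f_1(x))=|f_1(x)|^{\alpha-1}f_1(x)=|x|^{1-\alpha}|x|^{-2}x=|x|^{-1-\alpha}x$. Hence $G_{s,n}^\alpha=g_\alpha(G_{n,s})$, where $G_{n,s}=f_1(\{1^s,2^s,\ldots\}^n)$ is the set of \cite[Proposition 3.8]{BF}, for which $\overline{\dim}_\theta G_{n,s}=\tfrac{n\theta}{s+\theta}$; the proposition asserts that composing with the radial stretch $g_\alpha$ has the net effect of replacing the exponent $s$ by $s\alpha$. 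I would prove it by analysing $G_{s,n}^\alpha$ directly. First note that $G_{s,n}^\alpha$ is a countable compact set whose only accumulation point is $0$, being the image of the closed discrete set $\{1^s,2^s,\ldots\}^n\subset\R^n$ under the homeomorphism $f_\alpha$ of $\R^n\setminus\{0\}$; in particular $\dim_H G_{s,n}^\alpha=0$, so also $\dim_L G_{s,n}^\alpha=0$, while trivially $\dim_A G_{s,n}^\alpha\le n$. The plan then has three steps: (a) compute $\overline{\dim}_B G_{s,n}^\alpha=\tfrac{n}{1+s\alpha}$; (b) obtain the lower bound $\overline{\dim}_\theta G_{s,n}^\alpha\ge\tfrac{n\theta}{\theta+s\alpha}$ from the Banaji--Rutar inequality; (c) prove the matching upper bound by an explicit covering.

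For step (a), decompose $\{1^s,2^s,\ldots\}^n=\bigsqcup_{j\ge 0}S_j$ into the dyadic corner shells $S_j=\{\mathbf k\in\N^n:\max_i k_i\in[2^j,2^{j+1})\}$ and put $C_j=f_\alpha(S_j)$. Since $|(k_1^s,\ldots,k_n^s)|\asymp 2^{sj}$ for $\mathbf k\in S_j$, and since $f_\alpha$ restricted to any annulus $\{R\le|x|\le 2R\}$ agrees, after rescaling by the factor $R^{1+\alpha}$, with a bi-Lipschitz map whose constants depend only on $n$ and $\alpha$ (the singular values of $Df_\alpha$ on that annulus all lying in $[\alpha 2^{-1-\alpha}R^{-1-\alpha},\,R^{-1-\alpha}]$, by the computation just above), each $C_j$ has diameter $\asymp 2^{-s\alpha j}$ and cardinality $\asymp 2^{jn}$. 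Summing the covering numbers $N(C_j,\delta)$ over the relevant range of $j$ gives $N(G_{s,n}^\alpha,\delta)\asymp\delta^{-n/(1+s\alpha)}$, i.e.\ $\overline{\dim}_B G_{s,n}^\alpha=\tfrac{n}{1+s\alpha}$; this is a routine count of the type carried out in \cite[Proposition 3.8]{BF}, with $s$ replaced by $s\alpha$. For step (b), recall the monotonicity of the right-hand side of \eqref{eq:BR} noted immediately after Proposition \ref{prop:banaji-rutar} (decreasing in the Assouad parameter, increasing in the lower-dimension parameter): substituting $\dim_L=0$, $\overline{\dim}_B=\tfrac n{1+s\alpha}$, and the upper bound $n$ in place of $\dim_A$ into \eqref{eq:BR}, which only weakens the inequality, and simplifying, one obtains $\overline{\dim}_\theta G_{s,n}^\alpha\ge\tfrac{n\theta}{\theta+s\alpha}$ for every $0<\theta<1$; the case $\theta=1$ is already contained in (a).

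The substance of the argument is step (c). Fix $0<\theta\le 1$ and $s'>\tfrac{n\theta}{\theta+s\alpha}$; I would build, for each small $\delta\in(0,1)$, a cover of $G_{s,n}^\alpha$ by sets of diameter in $[\delta^{1/\theta},\delta]$ whose diameter-$s'$-sum is as small as we please. The far clusters, those $C_j$ with $\diam C_j<\delta^{1/\theta}$, together with $\{0\}$, lie in a single set of diameter $\asymp\delta^{1/\theta}$, contributing $\lesssim\delta^{s'/\theta}$. Each $C_j$ with $\diam C_j\in[\delta^{1/\theta},\delta]$ is covered by one set of that diameter, with geometric-sum contribution $\lesssim\delta^{s'}$. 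The delicate case is the finitely many $C_j$ with $\diam C_j>\delta$, which must be covered at scales $\le\delta$: one splits this range at a threshold index $j^\ast$, covering the larger such clusters point-by-point at the finest admissible scale $\delta^{1/\theta}$ (legitimate precisely because the minimal gap of those clusters exceeds $\delta$) and covering the smaller ones at the coarsest scale $\delta$. The two resulting contributions are geometric sums of the shapes $\sum 2^{jn}\delta^{s'/\theta}$ and $\sum(\diam C_j)^n\delta^{s'-n}$; choosing $j^\ast$ to be the index at which these balance, each sum is dominated by its value at $j^\ast$, which is $\delta$ raised to an exponent that is an increasing affine function of $s'$ vanishing exactly at $s'=\tfrac{n\theta}{\theta+s\alpha}$. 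Letting $\delta\to 0$ gives $\overline{\dim}_\theta G_{s,n}^\alpha\le\tfrac{n\theta}{\theta+s\alpha}$, which together with (b) completes the proof. I expect the main obstacle to be exactly this last bookkeeping in step (c) --- pinning down $j^\ast$ and the several ranges of $j$, and handling the fact that the internal gaps of $C_j$ are controlled only up to bounded multiplicative factors rather than exactly. This is, modulo replacing $s$ by $s\alpha$, the computation carried out in \cite[Proposition 3.8]{BF} for $\alpha=1$; an alternative to redoing it would be to check that $g_\alpha$ transports a cover realising $\overline{\dim}_\theta G_{n,s}$ to one realising $\overline{\dim}_\theta G_{s,n}^\alpha$ under the scale change $r\mapsto r^\alpha$, so that the result follows directly from \cite[Proposition 3.8]{BF}.
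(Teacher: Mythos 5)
Your proposal follows the same two-pronged strategy as the paper: an explicit covering at two scales for the upper bound, and the Banaji--Rutar inequality (Proposition~\ref{prop:banaji-rutar}) for the lower bound. Your observation that the a priori bound $\dim_A \le n$ suffices for the lower bound, by monotonicity of the right-hand side of \eqref{eq:BR}, is correct and slightly economises the argument (the paper nevertheless establishes $\dim_A G_{s,n}^\alpha = n$ exactly, in Remark~\ref{rem:G-s-n-alpha-banaji-rutar}, because it is needed there). Two points worth flagging. First, the proposition asserts equality of upper \emph{and lower} $\theta$-intermediate dimension (see the paragraph following the statement of Proposition~\ref{prop:G-s-n-alpha}); your steps (b) and (c) pin down only $\overline\dim_\theta$. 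The paper closes this by first showing $\underline\dim_B G_{s,n}^\alpha \ge n/(1+s\alpha)$ (via a $\delta$-density/$\delta$-separation argument, split into the cases $s>1$ and $s\le 1$), then applying Banaji--Rutar to $\underline\dim_\theta$ — a fix you could append to your step (a). Second, your step (c) is more elaborate than necessary: the paper's covering uses no shell decomposition and no threshold index $j^\ast$, instead covering $f_\alpha(\{1^s,\ldots,m_0^s\}^n)$ point-by-point at the fine scale and the residual corner cube $[0,m_0^{-s\alpha}]^n$ at the coarse scale, with $m_0 \approx \delta^{-\theta/(\theta+s\alpha)}$; the two contributions already balance by this choice of $m_0$. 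Finally, your suggested shortcut of transporting a $\theta$-admissible cover of $G_{n,s}$ through $g_\alpha$ is shakier than it appears, since $g_\alpha$ distorts diameters by a position-dependent factor, so the image of a $\theta$-admissible family is not in general $\theta$-admissible; indeed, this non-uniform scaling of the radial stretch is precisely the mechanism exploited later in Theorem~\ref{thm:theta-qc-sharpness}, and the direct covering avoids having to control it.
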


Recall that we made a standing assumption in this paper to only consider the upper intermediate dimension ${\overline\dim}_\theta$ and to drop use of the modifier `upper' in its description. In Proposition \ref{prop:G-s-n-alpha} we drop this standing assumption. 
By writing $\dim_\theta G_{s,n}^\alpha$ in \eqref{eq:G-s-n-alpha}, we mean that the upper intermediate dimension ${\overline\dim}_\theta G_{s,n}^\alpha$ and the lower intermediate dimension ${\underline\dim}_\theta G_{s,n}^\alpha$ coincide.

\begin{rem}\label{rem:G-s-n-alpha-banaji-rutar}
Note that the sets $G_{s,n}^\alpha$ are extremal for the Banaji--Rutar lower bound \eqref{eq:BR} for intermediate dimension, for all choices of $\theta$. It suffices to check that $G_{s,n}^\alpha$ has lower dimension zero and has Assouad dimension $n$. The former fact holds since $G_{s,n}^\alpha$ has Hausdorff dimension zero. For the latter fact, we note first that $\dim_A(\{1^s,2^s,3^s,\ldots\}) = 1$ by \cite[Theorem 3.4.7]{fr:book} and \cite[Theorem A.10(1)]{luukkainen}. Hence $\dim_A(\{1^s,2^s,3^s,\ldots\}^n) = n$ by \cite[Theorem A.5(5)]{luukkainen}. The desired conclusion $\dim_A G_{s,n}^\alpha = n$ now follows from the fact that quasiconformal mappings of $\R^n$ preserve the property that a subset has Assouad dimension $n$, see e.g.\ \cite[Proposition 5.4]{tys:assouad} or \cite[Theorem 1.2]{ct:qc-assouad-spectrum}.\footnote{Alternatively, one could check directly that $G_{s,n}^\alpha$ is neither doubling nor uniformly perfect.} Consequently, the estimate in \eqref{eq:BR}, for the set $X = G_{s,n}^\alpha$, reads
\begin{equation}\label{eq:G-s-n-alpha-banaji-rutar}
\overline\dim_\theta G_{s,n}^\alpha \ge \frac{n \theta \overline\dim_B G_{s,n}^\alpha}{\theta \, \overline\dim_B G_{s,n}^\alpha + n - \overline\dim_B G_{s,n}^\alpha}
\end{equation}
and using the value in \eqref{eq:G-s-n-alpha} we see that equality holds in \eqref{eq:G-s-n-alpha-banaji-rutar}.
\end{rem}

Let us first show how to deduce Theorem \ref{thm:theta-qc-sharpness} from Proposition \ref{prop:G-s-n-alpha}.

\begin{proof}[Proof of Theorem \ref{thm:theta-qc-sharpness}]
Fix $n=2$ and $K \ge 1$. Choose any $s>0$ and $0<\alpha<1$ and let $E = G_{s,2}^\alpha \subset \C$ be the set described in Proposition \ref{prop:G-s-n-alpha}. Let $\beta = \alpha/K$. The map $g(y) = |y|^{1/K-1} y$ is $K$-quasiconformal and
$$
g \circ f_\alpha = f_\beta,
$$
whence
$$
g(G_{s,2}^\alpha) = G_{s,2}^\beta.
$$
For $0<\theta<1$ we compute
$$
\frac1{\overline\dim_\theta g(G_{s,2}^\alpha)} - \frac12 = \frac1{\overline\dim_\theta G_{s,2}^\beta} - \frac12 = \frac{\theta+s\beta}{2\theta} - \frac12 = \frac{s\beta}{2\theta}
$$
and
$$
\frac1{\overline\dim_\theta G_{s,2}^\alpha} - \frac12 = \frac{s\alpha}{2\theta}.
$$
Since $K = \tfrac\alpha\beta$ we conclude that
$$
\frac1{\overline\dim_\theta g(G_{s,2}^\alpha)} - \frac12 = \frac1K \left( \frac1{\overline\dim_\theta G_{s,2}^\alpha} - \frac12 \right)
$$
for each choice of $0<\theta \le 1$. This completes the proof.
\end{proof}

\begin{proof}[Proof of Proposition \ref{prop:G-s-n-alpha}]
First, we show the upper bound
\begin{equation}\label{eq:eq-1}
\overline\dim_\theta G_{s,n}^\alpha \le \frac{n\theta}{\theta + s \alpha}.
\end{equation}
For some sufficiently small $\delta_0 > 0$ and $0<\delta \le \delta_0$, let
$$
m_0 := \lceil \delta^{-\theta/(\theta+s\alpha)} \rceil.
$$
We define a covering $\cU$ of $G_{s,n}^\alpha$ as follows:
\begin{itemize}
\item[(i)] for each point in the set $f_\alpha( \{1^s,2^s,\ldots,m_0^s\}^n)$, cover the point with a single ball of diameter $\delta$, and
\item[(ii)] cover $[0,m_0^{-s\alpha}]^n$ with $K$ sets of diameter $\delta^\theta$, where $K \approx ( \tfrac{m_0^{-s\alpha}}{\delta^\theta} )^n$.
\end{itemize}
To see why this is a covering of all of $G_{s,n}^\alpha$, assume that $f_\alpha(x)$ is an element of $G_{s,n}^\alpha$ with $x = (x_1,\ldots,x_n) \not \in \{1^s,2^s,\ldots,m_0^s\}^n$. Then $x_\ell \ge (m_0+1)^s$ for some index $\ell$. We claim that $||f_\alpha(x)||_\infty \le m_0^{-s\alpha}$ and hence $f_\alpha(x)$ lies in the set $[0,m_0^{-s\alpha}]^n$ considered in case (ii). In fact,
$$
||f_\alpha(x)||_\infty = \frac{\max\{ |x_j| : j=1,2,\ldots,n \}}{(\sum_k x_k^{2} )^{(1+\alpha)/2}}.
$$
Let $j$ be the index which achieves the preceding maximum. Then
$$
|x_j| = ||f_\alpha(x)||_\infty \cdot (\sum_k x_k^{2} )^{(1+\alpha)/2}
$$
and so
$$
m_0^{s\alpha} ||f_\alpha(x)||_\infty 
= \frac{m_0^{s\alpha} |x_j|}{ ( \sum_k x_k^{2} )^{(1+\alpha)/2}} 
< \frac{|x_\ell|^{\alpha} \, |x_j|}{ ( \sum_k x_k^{2} )^{(1+\alpha)/2}} 
\le \frac{|x_j|^{1+\alpha}}{ ( \sum_k x_k^{2} )^{(1+\alpha)/2}} 
\le 1.
$$
If we now choose $t>t_0 := \tfrac{n\theta}{\theta+s\alpha}$ then we find that
\begin{equation*}\begin{split}
\sum_{U \in \cU} (\diam U)^t  
&\le m_0^n \delta^t + K \delta^{t \theta} \lesssim m_0^n \delta^t + \left( \frac{m_0^{-s\alpha}}{\delta^\theta} \right)^n \delta^{t\theta} \\
&\lesssim \delta^{t\theta - n \theta + n s \alpha (\tfrac{\theta}{\theta+s\alpha})} + \delta^{t - n ( \tfrac{\theta}{\theta+s\alpha} ) } = \delta^{\theta ( t - t_0) } + \delta^{t-t_0} \lesssim 1.
\end{split}\end{equation*}
This completes the proof of \eqref{eq:eq-1}.

Next, we show that
\begin{equation}\label{eq:eq-2}
\underline\dim_\theta G_{s,n}^\alpha \ge \frac{n\theta}{\theta + s \alpha}.
\end{equation}
This time, for sufficiently small $\delta_0 > 0$ and $0<\delta \le \delta_0$, let
$$
m_1 := \lceil \delta^{-1/(1+s\alpha)} \rceil.
$$
If $s > 1$ then an elementary geometric argument yields the estimate
\begin{equation}\label{eq:geometric-argument-1}
\sup_{x \in [\delta,m_1^{-s\alpha}]^n} \inf_{y \in G_{s,n}^\alpha} |x-y| \lesssim \delta.
\end{equation}
For the reader's convenience, we postpone the proof of \eqref{eq:geometric-argument-1} and continue the main argument by observing that
\begin{equation}\begin{split}\label{eq:geometric-argument-2}
N(G_{s,n}^\alpha,\delta) &\ge N(G_{s,n}^\alpha \cap [\delta,m_1^{-s\alpha}]^n, \delta) \\ 
&\gtrsim \left( \frac{(m_1^{-s\alpha} - \delta)}{\delta} \right)^n \\
&\approx m_1^{s\alpha n} \delta^{-n} \\
&\approx \delta^{- ( \tfrac{n}{1+s\alpha}) }.
\end{split}\end{equation}
Alternatively, if $0<s<1$ then another geometric argument shows that the set $Q = f_\alpha( \{ 1^s,2^s,\ldots,m_1^s \}^n )$ is $\delta$-separated. (We leave the proof of this fact to the reader.) In this case we find that
\begin{equation*}\begin{split}
N(G_{s,n}^\alpha,\delta) \ge N(Q, \delta) 
&\gtrsim \delta^{n \eta} \left( \frac{m_1^{-s\alpha}}{\delta} \right)^n \\
&\gtrsim m_1^n \\
&\gtrsim \delta^{-(\tfrac{n}{1+s \alpha})}.
\end{split}\end{equation*}
We conclude that $\underline\dim_B G_{s,n}^\alpha \ge \frac{n}{1+s\alpha}$. Since $\dim_A G_{s,n}^\alpha = n$, the Banaji--Rutar estimate \eqref{eq:BR} yields
$$
\underline\dim_\theta G_{s,n}^\alpha \ge \frac{\theta \, \dim_A G_{s,n}^\alpha \, \underline\dim_B G_{s,n}^\alpha }{\dim_A G_{s,n}^\alpha - (1-\theta) \underline\dim_B G_{s,n}^\alpha } = \frac{n \theta}{\theta + s \alpha}.
$$
Consequently,
$$
\dim_\theta G_{s,n}^\alpha = \frac{n \theta}{\theta + s \alpha}
$$
as desired.
\end{proof}

Next, we prove \eqref{eq:geometric-argument-1}.

\begin{proof}[Proof of \eqref{eq:geometric-argument-1}]
Recall that $G_{s,n}^\alpha = f_\alpha((\N^s)^n)$ where $f_\alpha(u) = |u|^{-1-\alpha} u$ and we have chosen $m_1 \approx \delta^{-1/(1+s\alpha)}$. We are also working under the constraint $s>1$.

For variables $x$ and $y$ as in \eqref{eq:geometric-argument-1}, let us write $x = f_\alpha(u)$ and $y = f_\alpha(v)$. Then $v \in (\N^s)^n$ and $u$ satisfies $|u_j| \ge 1$ for all $j$ and $|u_{\submax}| \ge c_1 m_1^s$, where $c_1 = n^{-1/2-1/2\alpha}$. To see why the claims about $u$ are true, note that $x \in [0,m_1^{-s\alpha}]^n$ implies $|x| \le \sqrt{n} \cdot m_1^{-s\alpha}$ whence $|u| \ge n^{-1/2\alpha} \cdot m_1^s$ and so $|u_\submax| \ge c_1 m_1^s$. Moreover, since $|x_j| \ge \delta$ for all $j$, we have that $|u_j| = |u|^{1+\alpha} |x_j| \ge c_1^{1+\alpha} m_1^{s(1+\alpha)} \delta \ge c_2 m_1^{s(1+\alpha) - 1 - s \alpha} = c_2 m_1^{s-1}$ where $c_2 = c_1^{1+\alpha}$. If $\delta_0$ is chosen sufficiently small, then $m_1$ will be sufficiently large, so that we conclude $|u_j| \ge 1$ for all $j$.

Now  given $u$ as above, choose $v = (k_1^s,\ldots,k_n^s) \in (\N^s)^n$ so that $k_j^s \le u_j < (k_j+1)^s$. Here $k_j \ge 1$ for all $j$ and $k_\submax \ge c_3 m_1$ where $c_3 = \tfrac12 c_1^{1/s}$. Let us write $u_j = (k_j + \eps_j)^s$ for some $0 \le \eps_j < 1$. Then
\begin{equation*}\begin{split}
|f_\alpha(u) - f_\alpha(v)| &= \left| \frac{u}{|u|^{1+\alpha}} - \frac{v}{|v|^{1+\alpha}} \right| \\
&\lesssim \max_j \left| \frac{u_j}{|u|^{1+\alpha}} - \frac{v_j}{|v|^{1+\alpha}} \right| \\
& = \max_j \left| \frac{(k_j+\eps_j)^s}{(\sum_\ell (k_\ell+\eps_\ell)^{2s})^{(1+\alpha)/2}} - \frac{k_j^s}{(\sum_\ell k_\ell^{2s})^{(1+\alpha)/2}} \right|
\end{split}\end{equation*}
Let us define a function $h_j:\R^n \to \R$ by $h_j(X) = X_j^s (\sum_\ell X_\ell^{2s})^{-(1+\alpha)/2}$. Then
$$
|f_\alpha(u) - f_\alpha(v)| = \max_j |h_j(\vec{k} + \vec{\eps}) - h_j(\vec{k})|
$$
where $\vec{k} = (k_1,\ldots,k_n)$ and $\vec{\eps} = (\eps_1,\ldots,\eps_n)$. By the Mean Value Theorem, there exist values $\delta_j \in (0,1)$ so that $h_j(\vec{k} + \vec{\eps}) - h_j(\vec{k}) = \nabla h_j(\vec{k} + \delta_j \vec{\eps}) \cdot \vec{\eps}$. A computation gives
$$
\nabla h_j(X) = s \, \frac{X_j^{s-1} (\sum_\ell X_\ell^{2s}) \, \vec{E}_j - (1+\alpha) X_j^s (\sum_q X_q^{2s-1} \vec{E}_q)}{(\sum_\ell X_\ell^{2s})^{(3+\alpha)/2}}
$$
where $\vec{E}_j$ denotes the $j$th standard basis vector in $\R^n$. Thus
$$
\nabla h_j(\vec{k} + \delta_j \vec{\eps}) \cdot \vec{\eps}
$$
is equal to
$$ 
s \, \frac{(k_j+\delta_j \eps_j)^{s-1} (\sum_\ell (k_\ell + \delta_j \eps_\ell)^{2s}) \, \eps_j - (1+\alpha) (k_j + \delta_j \eps_j)^s (\sum_q (k_q + \delta_j \eps_q)^{2s-1} \eps_q)}{(\sum_\ell (k_\ell + \delta_j \eps_\ell)^{2s})^{(3+\alpha)/2}}.
$$
Since all of the values $\delta_q,\eps_\ell$ lie in the interval $[0,1)$ and all of the values $k_j$ are integers greater than or equal to one, we have that $k_j + \delta_q \eps_\ell \simeq k_j$ for all choices of $j,q,\ell$. Consequently,
\begin{equation*}\begin{split}
|f_\alpha(u) - f_\alpha(v)| 
&\lesssim \max_j \frac{k_j^{s-1} (\sum_\ell k_\ell^{2s}) + k_j^s (\sum_q k_q^{2s-1})}{(\sum_\ell k_\ell^{2s})^{(3+\alpha)/2}} \\
&\lesssim \frac{k_\submax^{3s-1}}{k_\submax^{(3+\alpha)s}} = k_\submax^{-1-\alpha s} \lesssim m_1^{-1-\alpha s} \lesssim \delta,
\end{split}\end{equation*}
which completes the proof.
\end{proof}

 The case $\alpha=1$ in \eqref{eq:geometric-argument-1} is treated in \cite{BF}. In that case, the argument can be somewhat simplified by making use of the identity
\begin{equation}\label{eq:conformal-identity}
\left| \frac{u}{|u|^2} - \frac{v}{|v|^2} \right| = \frac{|u-v|}{|u|\,|v|}
\end{equation}
valid for $u,v \in \R^n \setminus \{0\}$. Choosing $u$ and $v$ so that $x = u/|u|^2$ and $y = v/|v|^2$, rewrite \eqref{eq:geometric-argument-1} in the form
$$
\sup_{u:u/|u|^2 \in [\delta,m_1^{-s}]^n} \inf_{v \in \{1^s,2^s,\ldots\}^n} \frac{|u-v|}{|u|\,|v|} \lesssim \delta \approx m_1^{-(1+s)}.
$$
An anonymous referee pointed out that one might give an alternative proof of \eqref{eq:geometric-argument-1} in general  by using the approximate  identity
\[
\left| \frac{u}{|u|^{1+\alpha}} - \frac{v}{|v|^{1+\alpha}}\right| \approx \frac{|u-v|}{|u||v|} \min\{|u|,|v|\}^{1-\alpha}
\]
 again valid for $u,v \in \R^n \setminus \{0\}$. We leave the details to the reader.

The preceding example is a special case of a much more general phenomenon.

\begin{thm}\label{th:banaji-rutar-remark}
Let $E \subset \C$ be non-uniformly perfect and non-porous, and let $f:\C \to \C$ be a $K$-quasiconformal map such that (i) the pair $E$ and $f$ are extremal for the box-counting dimension distortion bound
\begin{equation}\label{eq:bc-dim-dist}
\frac1{\overline\dim_B f(E)} - \frac12 = \frac1K \left( \frac1{\overline\dim_B E} - \frac12 \right),
\end{equation}
and (ii) the set $E$ is also extremal for the Banaji--Rutar distortion estimate
\begin{equation}\label{eq:br}
\overline\dim_\theta E = \frac{\alpha(\beta-\lambda)\theta + (\alpha-\beta)\lambda}{(\beta-\lambda)\theta + (\alpha-\beta)}, \quad \lambda = \dim_L E, \beta = \overline\dim_B E, \alpha = \dim_A E
\end{equation}
for all $0<\theta\le 1$. Then the pair $E$ and $f$ are necessarily extremal for the intermediate dimension distortion bound, i.e.
\begin{equation}\label{eq:inter-dim-dist}
\frac1{\overline\dim_\theta f(E)} - \frac12 = \frac1K \left( \frac1{\overline\dim_\theta E} - \frac12 \right) \qquad \forall \, 0<\theta<1.
\end{equation}
\end{thm}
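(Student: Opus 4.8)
The plan is to run the whole argument inside the planar $\Phi$-calculus, writing $\Phi(s) = \tfrac1s - \tfrac12$ as in \eqref{eq:Phi-s} with $n=2$. First I would convert the qualitative hypotheses on $E$ into quantitative dimension data: non-uniform perfectness of $E$ is precisely the statement $\dim_L E = 0$ (section \ref{subsec:metric}), and non-porosity of a compact planar set is precisely the statement $\dim_A E = 2$ (see e.g.\ \cite{luukkainen} or \cite{fr:book}). Since a $K$-quasiconformal self-map $f$ of $\C$ is quasisymmetric and uniform perfectness is a quasisymmetric invariant, $f(E)$ is again non-uniformly perfect, so $\dim_L f(E) = 0$; and since quasiconformal maps of $\R^n$ preserve the property of having Assouad dimension $n$ (cf.\ the references cited in Remark \ref{rem:G-s-n-alpha-banaji-rutar}), also $\dim_A f(E) = 2$. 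Thus both $E$ and $f(E)$ sit in the extremal Banaji--Rutar regime with lower dimension $0$ and Assouad dimension $2$. (Here, as in the results invoked below, I take $E$, hence $f(E)$, to be compact.)

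The second ingredient is the elementary identity: for $\beta \in [0,2]$ and $0 < \theta \le 1$,
\[
\Phi\!\left( \frac{2\beta\theta}{\beta\theta + 2 - \beta} \right) = \frac{2 - \beta}{2\beta\theta} = \frac{1}{\theta}\,\Phi(\beta),
\]
a one-line computation. With the dimension data from the first step, hypothesis (ii) says precisely that ${\overline\dim}_\theta E = \tfrac{2\beta\theta}{\beta\theta+2-\beta}$ with $\beta = {\overline\dim}_B E$, so the identity gives $\Phi({\overline\dim}_\theta E) = \tfrac1\theta\,\Phi({\overline\dim}_B E)$. Applying Proposition \ref{prop:banaji-rutar} (the Banaji--Rutar \emph{lower} bound) to $f(E)$, for which the relevant exponents are $\dim_L = 0$ and $\dim_A = 2$, yields ${\overline\dim}_\theta f(E) \ge \tfrac{2\beta'\theta}{\beta'\theta + 2 - \beta'}$ with $\beta' = {\overline\dim}_B f(E)$; since $\Phi$ is decreasing, the identity turns this into the \emph{upper} bound $\Phi({\overline\dim}_\theta f(E)) \le \tfrac1\theta\,\Phi({\overline\dim}_B f(E))$.

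Finally I would sandwich. Hypothesis (i) reads $\Phi({\overline\dim}_B f(E)) = \tfrac1K\,\Phi({\overline\dim}_B E)$, so the upper bound just obtained becomes
\[
\Phi({\overline\dim}_\theta f(E)) \;\le\; \frac{1}{\theta}\cdot\frac{1}{K}\,\Phi({\overline\dim}_B E) \;=\; \frac{1}{K}\,\Phi({\overline\dim}_\theta E),
\]
using again the identity $\Phi({\overline\dim}_\theta E) = \tfrac1\theta\Phi({\overline\dim}_B E)$ from the second step. On the other hand, the left-hand inequality of Corollary \ref{cor:qc_dim_dist_intermediate-2d} gives $\Phi({\overline\dim}_\theta f(E)) \ge \tfrac1K\,\Phi({\overline\dim}_\theta E)$. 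The two inequalities force $\Phi({\overline\dim}_\theta f(E)) = \tfrac1K\,\Phi({\overline\dim}_\theta E)$ for all $0 < \theta < 1$, which is precisely \eqref{eq:inter-dim-dist}.

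I do not foresee a real obstacle here: the content is organizational rather than technical. The one place requiring care is the dictionary between "non-uniformly perfect, non-porous" and the exact equalities $\dim_L E = 0$, $\dim_A E = 2$, and the verification that both persist under $f$ — the first because uniform perfectness is a quasisymmetric invariant, the second because quasiconformal maps preserve full Assouad dimension (already invoked in Remark \ref{rem:G-s-n-alpha-banaji-rutar}). Granting that, together with the trivial $\Phi$-identity, the theorem reduces to a one-line sandwich of Proposition \ref{prop:banaji-rutar} against Corollary \ref{cor:qc_dim_dist_intermediate-2d}.
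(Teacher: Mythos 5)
Your proposal is correct and follows essentially the same approach as the paper: both translate the non-uniform-perfectness/non-porosity hypotheses into $\dim_L = 0$, $\dim_A = 2$ for $E$ and $f(E)$, use the identity $\Phi\bigl(\tfrac{2\beta\theta}{\beta\theta+2-\beta}\bigr) = \tfrac1\theta\Phi(\beta)$ to convert (ii) and the Banaji--Rutar lower bound for $f(E)$ into the one-sided estimate $\Phi(\overline\dim_\theta f(E)) \le \tfrac1K\Phi(\overline\dim_\theta E)$, and close the sandwich with the left-hand inequality of Corollary \ref{cor:qc_dim_dist_intermediate-2d}. The only cosmetic difference is that the paper leaves the final appeal to that corollary implicit while you state it explicitly.
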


\begin{proof}
Recall that a subset $E \subset \R^n$ is uniformly perfect if and only if $\dim_L E > 0$, and $E$ is porous if and only if $\dim_A E < n$. Furthermore, uniform perfectness and porosity are quasiconformal invariants. Let $E' = f(E)$ and let $\lambda' = \dim_L E'$, $\beta'=\overline\dim_B E'$, $\alpha'=\dim_A E'$. In view of the preceding remarks, we necessarily have $\lambda = \lambda' = 0$ and $\alpha = \alpha' = 2$. Hence \eqref{eq:br} reads $\overline\dim_\theta E = 2\beta\theta/(\beta\theta+2-\beta)$, i.e.
\begin{equation}\label{eq:no-1}
\frac1{\overline\dim_\theta E} = \frac12 + \frac1\theta \left( \frac1\beta - \frac12 \right),
\end{equation}
while the estimate in \eqref{eq:BR} for the image set reads $\overline\dim_\theta E' \ge 2\beta'\theta/(\beta'\theta+2-\beta')$, or equivalently,
\begin{equation}\label{eq:no-2}
\frac1{\overline\dim_\theta E'} \le \frac12 + \frac1{\theta} \left( \frac1{\beta'} - \frac12 \right).
\end{equation}
Finally, \eqref{eq:bc-dim-dist} reads
\begin{equation}\label{eq:no-3}
\frac1{\beta'}-\frac12 = \frac1K \left( \frac1\beta-\frac12 \right).
\end{equation}
Combining \eqref{eq:no-1}, \eqref{eq:no-2}, and \eqref{eq:no-3} yields
$$
\frac1{\overline\dim_\theta E'} - \frac12 \le \frac1K \left( \frac1{\overline\dim_\theta E} - \frac12 \right).
$$
Hence \eqref{eq:inter-dim-dist} holds true and the pair $E$ and $f$ is extremal for Corollary \ref{cor:qc_dim_dist_intermediate}.
\end{proof}

\subsection{Sharpness of Sobolev distortion estimates for intermediate dimension}

In this final subsection, we show that Theorem \ref{thm:sob_dim_dist_intermediate} is sharp, simultaneously for all choices of the intermediate dimensions parameter $\theta$. The main result of this subsection is the following theorem.

\begin{thm}\label{thm:intermediate-sharpness}
Let $E \subset \R^n$ be compact, and let $p>n$. Then there exists a continuous mapping $f \in W^{1,p}(\R^n:\R^n)$ so that ${\overline\dim}_\theta f(E) = \tau_p({\overline\dim}_\theta E)$ for all $0<\theta\le 1$.
\end{thm}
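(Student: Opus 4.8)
The plan is to obtain the inequality ${\overline\dim}_\theta f(E) \le \tau_p({\overline\dim}_\theta E)$ for free --- it holds for \emph{every} continuous $f\in W^{1,p}(\R^n:\R^n)$ by Theorem \ref{thm:sob_dim_dist_intermediate} --- and to construct a single map attaining equality for all $\theta$ at once. First I would reduce to countably many parameters: fix a dense sequence $\{\theta_k\}_{k\in\N}$ in $(0,1)$; since $\tau_p$ is continuous and strictly increasing on $(0,n)$ while $\theta\mapsto{\overline\dim}_\theta E$ and $\theta\mapsto{\overline\dim}_\theta f(E)$ are continuous on $(0,1]$ by \cite[Proposition 2.1]{ffk:intermediate}, it suffices to produce one continuous $f\in W^{1,p}(\R^n:\R^n)$ with ${\overline\dim}_{\theta_k}f(E)\ge\tau_p({\overline\dim}_{\theta_k}E)$ for every $k$; sandwiching this against the upper bound and invoking continuity then forces equality for all $\theta\in(0,1]$, the endpoint $\theta=1$ included. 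One may assume ${\overline\dim}_B E<n$, the complementary case following from the fact that the map built below does not lower the intermediate dimension of a full-dimensional set. As in Kaufman \cite{kau:sobolev}, $f$ is obtained nonconstructively, as one representative of a random family.

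For the random family I would use a random superposition $f_\omega=\sum_{m\ge1}\sum_{Q\in\cD_m}a_m\Psi_{Q,\omega}$ over dyadic scales $m$, where, for each dyadic cube $Q$ meeting a fixed neighbourhood of $E$, $\Psi_{Q,\omega}$ is a randomly rotated and reflected bump supported in $Q$, the orientations chosen independently over all cubes and scales, and the deterministic amplitudes $a_m$ (calibrated to the desired expansion at scale $m$) are allowed to grow to $\infty$ slowly enough that $f_\omega\in W^{1,p}(\R^n:\R^n)$ is continuous --- this is where the hypothesis $p>n$ enters. Then, almost surely, $f_\omega$ is uniformly $(1-n/p)$-H\"older by \eqref{eq:MS} and yet expands the dyadic cubes meeting $E$ essentially as much as the $W^{1,p}$-norm permits, at each scale. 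Because the distortion of distances under $f_\omega$ respects the dyadic grid, a covering of $E$ by cubes with diameters in a range $[\delta^{1/\theta},\delta]$ is carried to an image covering with diameters in a comparable range $[\eta^{1/\theta},\eta]$, so the intermediate-dimension parameter $\theta$ is preserved.

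The analytic core uses the capacitary description of intermediate dimension from Section \ref{subsec:intermediate-capacity}. Fix $\theta_k$, write $s={\overline\dim}_{\theta_k}E$ and $\tau:=\tau_p(s)$, and recall ${\overline\dim}_{\theta_k}E={\overline\dim}_{\theta_k}^n E$. I would take $\mu_j\in\cM(E)$ to be the $\phi_{r_j,\theta_k}^{s,n}$-equilibrium measures on $E$ at base scales $r_j\downarrow0$, chosen (and thinned to be as sparse as desired) so that $C_{r_j,\theta_k}^{s,n}(E)=r_j^{-s+o(1)}$, i.e.\ $\cE_{r_j,\theta_k}^{s,n}(\mu_j)=r_j^{s-o(1)}$; set $\rho_j:=r_j^{s/\tau}$ (the image scale dictated by the Morrey--Sobolev inequality) and push forward to $\nu_j:=(f_\omega)_*\mu_j\in\cM(f_\omega(E))$. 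Then
\[
\bE_\omega\,\cE_{\rho_j,\theta_k}^{\tau,n}(\nu_j)=\iint\bE_\omega\bigl[\phi_{\rho_j,\theta_k}^{\tau,n}(|f_\omega(x)-f_\omega(y)|)\bigr]\,d\mu_j(x)\,d\mu_j(y),
\]
and the decisive ingredient is a \emph{pointwise} kernel estimate $\bE_\omega\bigl[\phi_{\rho_j,\theta_k}^{\tau,n}(|f_\omega(x)-f_\omega(y)|)\bigr]\le C\,\phi_{r_j,\theta_k}^{s,n}(|x-y|)$, with the \emph{same} parameter $\theta_k$ on both sides, obtained by conditioning on the dyadic scales separating $x$ and $y$ and using the independence and random isotropy of the bumps together with the symmetrisation Lemma \ref{lem:Ers-symmetrization}; one checks along the way that the breakpoints $r_j,r_j^{\theta_k}$ of the source kernel are carried exactly to $\rho_j,\rho_j^{\theta_k}$ and that the correct exponent is $\tau=\tau_p(s)$. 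Integrating yields $\bE_\omega\cE_{\rho_j,\theta_k}^{\tau,n}(\nu_j)\le C\,\cE_{r_j,\theta_k}^{s,n}(\mu_j)=C\,r_j^{s-o(1)}=C\,\rho_j^{\tau-o(1)}$; a standard Markov plus Borel--Cantelli argument (exploiting the sparsity of $\{r_j\}$) then gives, almost surely, $\limsup_j\frac{\log C_{\rho_j,\theta_k}^{\tau,n}(f_\omega(E))}{\log(1/\rho_j)}\ge\tau$, hence, via Proposition \ref{prop:box-profile}, $\limsup_{\rho\to0}\frac{\log S_{\rho,\theta_k}^{\tau}(f_\omega(E))}{\log(1/\rho)}\ge0$, i.e.\ ${\overline\dim}_{\theta_k}f_\omega(E)\ge\tau$, almost surely. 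Intersecting these full-measure events over all $k$ produces a single $\omega$ for which $f:=f_\omega$ satisfies ${\overline\dim}_{\theta_k}f(E)\ge\tau_p({\overline\dim}_{\theta_k}E)$ for every $k$, and hence ${\overline\dim}_\theta f(E)=\tau_p({\overline\dim}_\theta E)$ for all $0<\theta\le1$.

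The step I expect to be the main obstacle is the pointwise expectation bound $\bE_\omega\bigl[\phi_{\rho,\theta}^{\tau,n}(|f_\omega(x)-f_\omega(y)|)\bigr]\lesssim\phi_{r,\theta}^{s,n}(|x-y|)$ with matching $\theta$, together with the calibration of the image scale $\rho=r^{s/\tau}$. Kaufman's box-counting argument is the case $\theta=1$, where the kernels involve only a single scale; in the intermediate regime one must simultaneously propagate control through the inner scales $r^\theta$ and $\rho^\theta$, so that \emph{both} breakpoints of the source kernel $\phi_{r,\theta}^{s,n}$ are carried, in the correct proportion, to the breakpoints of the target kernel $\phi_{\rho,\theta}^{\tau,n}$ --- and arranging that this proportion produces the exponent $\tau_p$ rather than the coarser H\"older exponent $(1-n/p)^{-1}$ is precisely the delicate point. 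Lemma \ref{lem:Ers-symmetrization} is tailored for this purpose, but it has to be interleaved carefully with the scale-by-scale independence of the random bumps. A secondary, bookkeeping-type difficulty is assembling one $\omega$ that serves the entire dense family $\{\theta_k\}$ at once, for which the continuity of $\theta\mapsto{\overline\dim}_\theta(\cdot)$ is again essential.
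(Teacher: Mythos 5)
Your broad strategy — upper bound for free from Theorem \ref{thm:sob_dim_dist_intermediate}, lower bound by a random superposition of dyadic bumps controlled through the capacitary formulation of Section \ref{subsec:intermediate-capacity}, then aggregation over a countable dense set of $\theta$ — is indeed the paper's strategy, and you correctly flag the kernel expectation estimate as the crux. However, there is a genuine structural gap in the way you set up the random map, and a secondary issue with the form of the kernel estimate you claim.

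\textbf{The amplitudes cannot be a single deterministic sequence $a_m$.} You propose $f_\omega=\sum_m\sum_{Q\in\cD_m}a_m\Psi_{Q,\omega}$ with one amplitude per dyadic scale, ``calibrated to the desired expansion at scale $m$.'' But the correct calibration depends on $\theta$. Concretely, the amplitude governing the Sobolev membership and the optimal target exponent must have the schematic form $c_Q\approx r^\gamma\mu(8Q)^{1/n}$ where $\gamma=\gamma_p(\sigma,\theta)=\theta(p-n)(n-\sigma)/(pn)$ and $\mu$ is the $\theta$-equilibrium measure at the relevant source scale $r$; the exponent $\gamma$ scales linearly in $\theta$, and the equilibrium measure $\mu$ itself varies with $\theta$ and $r$. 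There is no single choice of $a_m$ that is simultaneously sharp for the Hausdorff-adjacent regime (small $\theta$, where the Frostman exponent $\approx\dim_\theta E$ is small) and the box-counting regime ($\theta=1$); the scaling exponents $s(\theta)/\tau_p(s(\theta))=(p-n+s(\theta))/p$ differ for different $\theta$. The paper resolves this by constructing a \emph{countable family} of random maps $f_{\xi_{jk\ell},j,k,\ell}$, one for each triple $(\theta_j,r_{jk},\sigma_{j,\ell})$, each with its own cube-and-measure-dependent amplitude $c_Q=r_{jk}^{\gamma_p(\sigma_{j,\ell},\theta_j)}\mu(8Q)^{1/n}$, and then sums them with normalisation as in \eqref{eq:f-xi-new}, exploiting full independence of the random vectors across all layers. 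Your one-layer construction cannot serve all $\theta$ at once, which is precisely the simultaneity that makes the theorem nontrivial. Relatedly, your amplitudes are cube-independent (depending only on $m$), whereas the measure-dependence $\mu(8Q)^{1/n}$ is what makes the Sobolev-norm sum $\sum_Q c_Q^p$ telescope against the Frostman growth of $\mu$, and is also what produces the lower bound $\|a(x,y)\|_\infty\gtrsim r^\gamma\mu(8Q_0^m)^{1/n}$ fed into the energy estimate.

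\textbf{The pointwise kernel estimate you propose is stronger than what is provable (or needed).} You ask for $\bE_\omega\bigl[\phi_{\rho,\theta}^{\tau,n}(|f_\omega(x)-f_\omega(y)|)\bigr]\lesssim\phi_{r,\theta}^{s,n}(|x-y|)$, a clean pointwise domination. The paper's Lemma \ref{lem:technical-lemma2} gives only $\bE\bigl(\phi_{R,\theta}^{\tau,n}(|\sum_k a_kX_k|)\bigr)\lesssim\phi_{R,\theta}^{\tau,n}(\|a\|_\infty)\log_+(\|a\|_\infty/R^\theta)$ — the bound is in terms of the largest coefficient $\|a(x,y)\|_\infty$, not $|x-y|$ directly, and carries a logarithmic loss. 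Converting that into control of the energy requires a separate integration step using the measure growth bounds \eqref{eq:upper-bound-1-5}--\eqref{eq:upper-bound3} and a scale-by-scale decomposition of $\{y:|x-y|\ge r\}$ into annuli $E_m$, producing the final estimate $\bE_\xi\cE_{R,\theta}^{\tau,n}((f_\xi)_\#\mu)\lesssim R^\tau\log^2(1/R)$ rather than the loss-free $\lesssim\cE_{r,\theta}^{s,n}(\mu)$ you expect. The logarithms are absorbed harmlessly in the Chebyshev/union-bound step, but a genuine pointwise domination with matching $\theta$ on both sides is not what the argument proves, and it is unclear it holds. (Your instinct that Lemma \ref{lem:Ers-symmetrization} is the right symmetrisation tool is correct, but what it buys is the single-cube estimate inside the kernel expectation, not the full $|x-y|$-level comparison.) Finally, the equilibrium measure is only known to satisfy the key estimates $\mu$-a.e.; the paper takes care to restrict to a full-measure subset $E_0\subset E$ and then upgrade via monotonicity of capacity, and to approach $s_j$ from below by $\sigma_{j,\ell}\nearrow s_j$ rather than working directly at $s_j$ — both are needed and not addressed by your $r_j^{-s+o(1)}$ normalisation.
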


To prove Theorem \ref{thm:intermediate-sharpness}, we follow the idea introduced by Kaufman in \cite{kau:sobolev}. He constructs similar examples for Hausdorff and box-counting dimension via a random and non-constructive procedure. 

As the argument in the intermediate dimensions case involves multiple novelties, we first summarise the major steps in Kaufman's argument in general terms, without specific reference to any particular notion of dimension. We then indicate in greater detail how to implement that argument in both the Hausdorff and the box-counting situations. Kaufman's original proof of sharpness for the box-counting dimension directly controlled the covering number $N(E,r_k)$ for a compact set $E$ along a suitable sequence of scales $r_k \to 0$ via combinatorial arguments. We modify those arguments by using instead a capacitary approach to box-counting dimension, using the capacities $\phi^m_r$ described in subsection \ref{subsec:intermediate-capacity}. These preliminaries facilitate our actual proof of Theorem \ref{thm:intermediate-sharpness}, for which we use the capacitary approach to intermediate dimension developed in \cite{bff:intermediate-projections} and the capacities $\phi^{s,m}_{r,\theta}$.

\subsubsection{Overview of Kaufman's strategy}

Our goal is to prove that if the source set $E$ has $\dim E \ge s$ for some $s\ge 0$, then for each $p>n$ there exists a $W^{1,p}$ map $f:\R^n \to \R^N$ so that $\dim f(E) \ge \tau_p(s)$. In Theorem \ref{thm:intermediate-sharpness} we will build such a map $f$ which realises the lower bound for all choices of ${\overline\dim}_\theta E$, $0<\theta \le 1$ simultaneously, but for convenience we start here with a quick overview of the construction for a fixed notion of dimension.

First, the hypothesis of a lower bound on $\dim E$ yields an array of consequences regarding the behaviour of $E$ at various locations and scales. Such consequences may generally be phrased in terms of growth conditions, either for suitable potential-theoretic measures on $E$ or for covering or packing numbers of $E$. 

Choose a suitable countable collection of scales, e.g., associated to the standard dyadic decomposition of $\R^n$. To each dyadic cube $Q$, associate the following data:
\begin{itemize}
\item[(i)] a countable collection of random vectors $\xi_Q^\nu$, $\nu=1,2,\ldots$, each of which is uniformly distributed with respect to the normalised Lebesgue measure in the unit ball $B^N$ of $\R^N$,
\item[(ii)] for each $\nu$, a Lipschitz cutoff function $\phi_Q^\nu$, equal to one on $Q$ and vanishing outside $\tfrac54Q$, and
\item[(iii)] a suitable constant $\overline{c}_Q^\nu>0$. 
\end{itemize}
With the above machinery to hand, define a random map $f_\xi:\R^n \to \R^N$ by 
\begin{equation}\label{eq:f-xi}
f_\xi(x) = \sum_{Q,\nu} \overline{c}_Q^\nu \phi_Q^\nu(x) \xi_Q^\nu, \qquad \xi = (\xi_Q^\nu)_{Q,\nu}.
\end{equation}
Two competing goals guide the choice of the coefficients $\overline{c}_Q^\nu$. First, one seeks to have $f_\xi \in W^{1,p}(\R^n:\R^N)$ for {\bf every} choice of $\xi$. Next, one aims to show that, almost surely w.r.t.\ $\xi$, $f_\xi$ increases the dimension of $E$ by the appropriate optimal amount. Needless to say, the manner in which this scheme is implemented depends on both the initial set $E$ and the type of dimension under consideration.

A key point in the proof is that the vectors $\xi_Q^\nu$ should be chosen independently with respect to the underlying probability measure, for all instances of the dyadic cube $Q$ and all values of $\nu$. This ensures that information derived about the behaviour of the image $f_\xi(E)$ from a single term in \eqref{eq:f-xi} does not interfere with information derived from other terms. 

The need to include multiple random vectors associated to a single dyadic cube arises first in the case of box-counting dimension, where we consider a sequence of scales $r_k \to 0$ on which $E$ has good covering behaviour, and include terms to \eqref{eq:f-xi} corresponding to all dyadic scales $2^{-m} \gtrsim r_k$. This entails adding infinitely many terms for each cube $Q \in \cD_m$, corresponding to all choices of $k$ such that $r_k \lesssim 2^{-m}$. In subsection \ref{subsec:proof-of-intermediate-sharpness}, where we adapt this proof to the intermediate dimensions context, a further complication arises. To wit, we combine a complete set of terms as in \eqref{eq:f-xi} for each choice in a countable dense set $\{\theta_j\}$ of intermediate dimension parameters in $(0,1]$, and for each such $j$, for all terms in a relevant sequence of scales $(r_{jk})$ with $r_{jk} \to 0$. In all of these cases, the independence hypothesis on the random vectors $\xi_Q^\nu$ is essential.

\begin{example}[Hausdorff dimension]\label{ex:dim-H-sharpness-sketch}
In the case when $\dim = \dim_H$, one implements the above scheme by considering measures $\mu$ defined on $E$ satisfying Frostman-type growth bounds with respect to exponents $s<\dim_H E$. Here one uses the following version of \eqref{eq:f-xi}:
\begin{equation}\label{eq:f-xi-H}
f_\xi(x) = \sum_{m=1}^\infty \frac1{m^2} \sum_{Q \in \cD_m} c_Q \phi_Q(x) \xi_Q, \qquad \xi = (\xi_Q)_Q,
\end{equation}
where $c_Q = \mu(E \cap CQ)^{1/\tau_p(s)}$ for a suitable  $C>1$. Note (as stated above) that in this case we do not need to include multiple terms for each dyadic cube $Q$.

Since $\mu$ satisfies a Frostman condition with exponent $s$, $c_Q \le C' (\diam Q)^{s/\tau_p(s)}$ for some constant $C'$; compare with the choice of the target scaling $r_m$ in \eqref{eq:r-m}. Denoting the inner sum in \eqref{eq:f-xi-H} by $f_{\xi,m}(x)$, an easy computation using the Lipschitz estimate for $\phi_Q$ and the bounded overlap property of the expanded cubes $\{\tfrac54 Q:Q \in \cD_m\}$ reveals that $\int |\nabla f_{\xi,m}|^p \le C 2^{m(p-n)} \sum_{Q \in \cD_m} c_Q^p$. Moreover, 
\begin{equation*}\begin{split}
\sum_{Q \in \cD_m} c_Q^p 
&= \sum_{Q \in \cD_m} \mu(E \cap CQ)^{p/\tau_p(s) - 1} \mu(E \cap CQ) \\
&\le 2^{-ms(p/\tau_p(s)-1)} \sum_{Q \in \cD_m} \mu(E \cap CQ) \le 2^{-m(ps/\tau_p(s)-s)},
\end{split}\end{equation*}
using the bounded overlap property of the expanded cubes $\{CQ:Q \in \cD_m\}$. Since $\tfrac{ps}{\tau_p(s)} - s = p-n$, we conclude that $\int |\nabla f_{\xi,m}|^p \lesssim 1$ for each $m$, and hence $f_\xi$ as in \eqref{eq:f-xi-H} lies in $W^{1,p}$ for each $\xi$.

An almost sure lower bound for $\dim_H f_\xi(E)$ is obtained by showing that the $t$-energies of the pushforward measures $(f_\xi)_\#(\mu)$ have finite expectation w.r.t.\ $\xi$:
$$
\bE_\xi(I_t((f_\xi)_\#\mu)) := \bE_\xi \left( \iint |a-b|^{-t} \, d(f_\xi)_\#\mu(a) \, d(f_\xi)_\# \mu (b) \right) < \infty
$$
for values $t<\tau_p(s)$. In view of Tonelli's theorem, it suffices to prove that
\begin{equation}\label{eq:tonelli}
\iint \bE_\xi \bigl( |f_\xi(x)-f_\xi(y)|^{-t} \bigr) d\mu x\, d \mu y < \infty.
\end{equation}
Writing $f_\xi(x) - f_\xi(y) = \sum_{Q} a_Q(x,y) \xi_Q$, where the sum is taken over all dyadic cubes $Q$, and introducing the quantity $||a(x,y)||_\infty := \max_Q |a_Q(x,y)|$, one appeals to the following elementary probabilistic lemma (see \cite[Lemma 4.4]{bmt:sobolev} for a proof).

\begin{lem}\label{lem:probability}
Let $(X_k)$ be a countable sequence of independent random variables, identically distributed according to the uniform measure on $B^N$. Let $a = (a_k) \in \ell^1$ and $0<t<N$. Then
$\bE \bigl( \bigl| \sum_k a_k X_k \bigr|^{-t} \bigr) \le C ||a||_\infty^{-t}$ for some $C = C(N,t)$.
\end{lem}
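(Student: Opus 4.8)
\textbf{Proof proposal for Lemma \ref{lem:probability}.}

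The plan is to reduce the statement, by conditioning on all of the summands except one, to a purely geometric estimate: the expectation of $|Y|^{-t}$ when $Y$ is uniformly distributed on a ball in $\R^N$ whose radius is controlled from below by $||a||_\infty$. First I would dispose of the degenerate case $a \equiv 0$ (both sides are $+\infty$) and otherwise fix an index $k_0$ with $|a_{k_0}| = ||a||_\infty$; such an index exists because $a \in \ell^1$ forces $a_k \to 0$, so the supremum defining $||a||_\infty$ is attained. Since $||X_k|| \le 1$ almost surely and $\sum_k |a_k| < \infty$, the series $\sum_k a_k X_k$ converges absolutely almost surely, so $Z := \sum_k a_k X_k$ is a well-defined random vector, and likewise $S := \sum_{k\ne k_0} a_k X_k$ is almost surely finite. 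By independence of the $X_k$, conditionally on the family $(X_k)_{k\ne k_0}$ the vector $a_{k_0}X_{k_0}$ is uniformly distributed on the origin-centred ball of radius $|a_{k_0}| = ||a||_\infty$, and hence $Z = S + a_{k_0}X_{k_0}$ is conditionally uniformly distributed on $B(S,||a||_\infty)$.

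The heart of the matter is then the following elementary claim, with $\omega_N := \Vol(B^N)$: \emph{if $Y$ is uniformly distributed on a ball $B(z,\rho) \subset \R^N$ and $0<t<N$, then $\bE(|Y|^{-t}) \le C(N,t)\,\rho^{-t}$ with $C(N,t) = 2^t + N(N-t)^{-1}2^{-(N-t)}$ independent of the centre $z$.} To prove it I would split
$$
\bE(|Y|^{-t}) = \bE\bigl(|Y|^{-t}\mathbf{1}_{\{|Y|\ge \rho/2\}}\bigr) + \bE\bigl(|Y|^{-t}\mathbf{1}_{\{|Y|<\rho/2\}}\bigr).
$$
The first term is at most $(\rho/2)^{-t} = 2^t\rho^{-t}$. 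For the second term, using $\Vol(B(z,\rho)) = \Vol(B(0,\rho)) = \omega_N\rho^N$ together with the inclusion $B(z,\rho)\cap\{|y|<\rho/2\} \subseteq \{|y|<\rho/2\}$,
$$
\bE\bigl(|Y|^{-t}\mathbf{1}_{\{|Y|<\rho/2\}}\bigr) = \frac{1}{\omega_N\rho^N}\int_{B(z,\rho)\cap\{|y|<\rho/2\}} |y|^{-t}\,dy \le \frac{1}{\omega_N\rho^N}\int_{\{|y|<\rho/2\}} |y|^{-t}\,dy = \frac{N}{(N-t)2^{N-t}}\,\rho^{-t},
$$
where the last integral is finite \emph{precisely because} $t<N$. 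Adding the two contributions proves the claim. Applying it conditionally with $\rho = ||a||_\infty$ gives $\bE\bigl(|Z|^{-t} \mid (X_k)_{k\ne k_0}\bigr) \le C(N,t)\,||a||_\infty^{-t}$ almost surely, and taking expectations over $(X_k)_{k\ne k_0}$ yields the lemma.

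There is no genuine obstacle here; if I had to single out the points demanding care, they are (i) the legitimacy of the conditioning step — measurability and almost sure finiteness of $S$, and the fact that conditionally on $(X_k)_{k\ne k_0}$ the variable $X_{k_0}$ retains its uniform law — all of which follow immediately from independence and $a\in\ell^1$; and (ii) the hypothesis $t<N$, which is exactly what guarantees integrability of $|y|^{-t}$ near the origin, and without which the conclusion already fails for a single term $a_1 X_1$.
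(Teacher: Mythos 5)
Your proof is correct. The paper does not prove this lemma itself (it cites \cite[Lemma 4.4]{bmt:sobolev}), but your argument has exactly the structure of that proof and of the paper's own generalisation in Lemma \ref{lem:technical-lemma2}: condition on all coordinates except the one carrying the largest coefficient, observe that the sum is then conditionally uniform on a ball of radius $||a||_\infty$, and bound $\bE(|Y|^{-t})$ for $Y$ uniform on an off-centre ball. The only point of divergence is how that last expectation is bounded: the paper's route (via Lemma \ref{lem:Ers-symmetrization}) first symmetrises, replacing $B(z,\rho)$ by the origin-centred ball, and then integrates in polar coordinates, whereas you split the integral at $|y|=\rho/2$ and handle the near-origin piece by enlarging the domain of integration; both are valid, yours avoids the rearrangement step at the cost of a slightly larger constant, and the hypothesis $t<N$ enters in the same place in each.
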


In view of Lemma \ref{lem:probability} we have
$$
\bE_\xi \bigl( |f_\xi(x) - f_\xi(y)|^{-t} \bigr) \le C ||a(x,y)||_\infty^{-t}
$$
and it suffices to prove that $\sup_{x \in E} \int ||a(x,y)||_\infty^{-t} \, d\mu y$ is finite. This is done by splitting the integral into a countable sum of terms on which a given scale $m \in \Z$ is predominant relative to a fixed point $x \in E$, and using the choice of $c_Q$ and the Frostman energy growth condition on the measure $\mu$.
\end{example}

\begin{example}[Box-counting dimension]\label{ex:dim-B-sharpness-sketch}
Turning to the case when $\dim = {\overline\dim}_B$, we first note that lower bounds on box-counting dimension are weaker than lower bounds for Hausdorff dimension, and only guarantee growth of the covering numbers $N(E,r_k)$ along various scales $r_k \to 0$. Consequently, in Kaufman's original proof in \cite{kau:sobolev}, the preceding proof scheme is implemented by replacing the measure-theoretic arguments of Example \ref{ex:dim-H-sharpness-sketch} by combinatorial arguments involving finite sets of points, with counting measure adopting the role of the Frostman measures. To aid the reader in understanding our proof of Theorem \ref{thm:intermediate-sharpness}, we present here an alternate argument using Falconer's capacities $C_r^m(E)$.

\smallskip

Assume that a given compact set $E \subset [0,1]^n$ has ${\overline\dim}_B E \ge s$, and fix $\sigma < s$. Since ${\overline\dim}_B^n E > \sigma$ (Proposition \ref{prop:box-profile}(iv)), there exists $r_k \to 0$ so that 
\begin{equation}\label{eq:Crn-lower-bound}
C_{r}^n(E) \ge r^{-\sigma}.
\end{equation} 
for each $r = r_k$. For the moment, we fix one such scale $r$. Let $\mu \in \cM(E)$ realise the infimum in the definition of the capacity $C_{r}^n(E)$. Then $\cE_{r}^n(\mu) \le r^\sigma$. Proposition \ref{prop:box-profile}(ii) implies that for $\mu$-a.e.\ $x \in E$,
\begin{equation}\label{eq:upper-bound-old}
\int \phi_{r}^n(|x-y|) \, d\mu y \le r^\sigma.
\end{equation}
Restricting the integral in \eqref{eq:upper-bound-old} to $B(x,r) \cap E$ yields
\begin{equation}\label{eq:upper-bound-old2}
\mu(B(x,r)) \le r^\sigma.
\end{equation}
A priori these conclusions only hold true for a $\mu$-full set of points in $E$. We may restrict attention to a subset $E_0 \subset E$ with $\mu(E_0)=1$ so that \eqref{eq:upper-bound-old} and \eqref{eq:upper-bound-old2} hold for all $x \in E_0$. This restriction does not impact our conclusion, since we seek a lower bound for the box-counting dimension of a suitable Sobolev image of $E$. We emphasise that both \eqref{eq:upper-bound-old} and \eqref{eq:upper-bound-old2} are only guaranteed to hold for the radius $r$ in \eqref{eq:Crn-lower-bound}. For larger radii $\hat{r}\in (r,1)$ the doubling property of $\R^n$ in conjunction with \eqref{eq:upper-bound-old2} gives
\begin{equation}\label{eq:upper-bound-old3}
\mu(B(x,\hat{r})) \le C(n) \hat{r}^n r^{\sigma-n}, \qquad \forall \, x \in E_0, \, \forall r<\hat{r}<1.
\end{equation}
We now define component functions $f_{\xi}$ analogous to those in \eqref{eq:f-xi}. First, for any dyadic cube $Q$, we choose the constant $c_Q$ as follows:
\begin{equation}\label{eq:cQagain}
c_Q = r^\gamma \mu(8Q)^{1/n}.
\end{equation}
Here\footnote{The exponent is chosen to ensure membership of $f_{\xi}$ in the appropriate Sobolev space. Note that
$$
\gamma(\sigma) = \sigma \left( \frac1{\tau(\sigma)} - \frac1n \right).
$$
Thus an alternate formula for the quantity in \eqref{eq:cQagain} is $c_Q = r^{\sigma/\tau} \left( \tfrac{\mu(8Q)}{r^\sigma} \right)^{1/n}$.}
\begin{equation}\label{eq:gamma-p-sigma}
\gamma = \gamma_p(\sigma) := \frac{(p-n)(n-\sigma)}{pn}.
\end{equation}
We then set
$$
f_{\xi}(x) = \sum_{\substack{m \\ 2^{-m} \gtrsim r}} \sum_{Q\in\cD_m} c_Q \phi_Q(x) \xi_Q
$$
where $\phi_Q$ and $\xi_Q$ are as before.\footnote{We emphasise a small distinction here, in contrast with the Hausdorff dimension case. Whereas in that case the function $f_\xi$ was defined by summing immediately over all dyadic cubes $Q$, here we fix a scale $r>0$ and only sum over dyadic scales $2^{-m}$ which exceed $r$. At a later stage of the argument, an additional summation will be performed over the relevant scales $r_k$ chosen as in \eqref{eq:Crn-lower-bound}.} Let $f_{\xi,m}(x)$ be the inner sum, so $f_{\xi} = \sum_m f_{\xi,m}$. We again have $\int |\nabla f_{\xi,m}|^p \lesssim 2^{m(p-n)} \sum_{Q \in \cD_m} c_Q^p$, and this time we find
\begin{equation*}\begin{split}
\sum_{Q \in \cD_m} c_Q^p 
&\lesssim \sum_{Q \in \cD_m} r^{\gamma p} \mu(8Q)^{p/n-1} \mu(8Q) \\
&\lesssim r^{\gamma p}2^{-mn(p/n-1)}r_j^{(\sigma-n)(p/n-1)} \lesssim 2^{-m(p-n)}
\end{split}\end{equation*}
by the choice of $\gamma$. Hence $\int |\nabla f_{\xi,m}|^p$ is bounded uniformly from above (independently of $\xi$ and $m$), and we conclude that $f_{\xi} \in W^{1,p}$ with $||f_{\xi}||_{W^{1,p}} \lesssim \log(1/r)$. Note that in this case the Sobolev norm grows logarithmically in terms of the scale $r$; we will counterbalance this growth at a later stage by choosing appropriate normalisation constants when summing over all scales $r_k$.

As before, we assume that the vectors $(\xi_Q)$ are chosen independently and are identically distributed w.r.t.\ the uniform measure on the unit ball $B$ of $\R^n$. Set $R := r^{\sigma/\tau}$. We claim that there exists a constant $\kappa>0$ depending only on $n$ (in particular, independent of $R$) so that for each $\eps>0$ the inequality
\begin{equation}\label{eq:capacity-lower-bound-for-E-0-at-scale-r-old}
C_{R}^n(f_{\xi}(E_0)) \ge \eps \kappa^{-1} R^{-\tau} \log^{-4}(\tfrac1{R})
\end{equation}
holds with probability at least $1 - c \eps \log^{-2}(1/R)$ w.r.t.\ the random variable $\xi$. This conclusion follows if
\begin{equation}\label{eq:energy-upper-bound-old}
\cE_{R}^n((f_{\xi})_{\#}\mu) \le \eps^{-1} \kappa R^\tau \log^{4}(\tfrac1{R})
\end{equation}
holds with the same probability. By Chebyshev's inequality it suffices to prove the following lemma, where the auxiliary parameter $\eps$ does not appear.

\begin{lem}\label{lem:technical-lemma1old}
$\bE_\xi\bigl( \cE_{R}^n((f_{\xi})_{\#}\mu)\bigr) \le \kappa R^{\tau} \log^{2}(\tfrac1{R})$.
\end{lem}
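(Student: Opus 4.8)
The plan is to pass the expectation inside the double integral, reduce to a pointwise bound on the random kernel, and then play off the defining property of the coefficients $c_Q$ against the potential‑theoretic bounds \eqref{eq:upper-bound-old}--\eqref{eq:upper-bound-old3} on $\mu$. By Tonelli's theorem,
\[
\bE_\xi\bigl( \cE_R^n((f_\xi)_\#\mu) \bigr) \;=\; \iint \bE_\xi\Bigl[ \phi_R^n\bigl(|f_\xi(x)-f_\xi(y)|\bigr) \Bigr]\,d\mu(x)\,d\mu(y),
\]
so, since $\mu$ is a probability measure and $R^\tau=r^\sigma$, it suffices to bound the inner expectation by $\lesssim_n r^\sigma\log^2(\tfrac1R)$ uniformly in $x\in E_0$. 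Writing $a_Q(x,y):=c_Q(\phi_Q(x)-\phi_Q(y))$ we have $f_\xi(x)-f_\xi(y)=\sum_Q a_Q(x,y)\,\xi_Q$, a sum of finitely many independent vectors each uniform on $B^n$.

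First I would record the probabilistic input, a critical‑exponent companion of Lemma \ref{lem:probability}: if $(X_k)$ are i.i.d.\ uniform on $B^n$ and $a\in\ell^1$, then for every $\rho>0$
\[
\bE\Bigl[ \phi_\rho^n\bigl(|{\textstyle\sum_k}a_kX_k|\bigr) \Bigr] \;\le\; C_n\,\min\Bigl\{1,\bigl(\tfrac{\rho}{\|a\|_\infty}\bigr)^n\Bigr\}\cdot\max\bigl\{1,\log(\|a\|_\infty/\rho)\bigr\},
\]
proved by conditioning on all coordinates except one at which $|a_k|=\|a\|_\infty$ and applying Cavalieri's principle to the resulting one‑dimensional integral; the logarithm is genuinely present because $\phi_\rho^n$ carries the critical exponent $n$ for $\R^n$. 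Since $\|a(x,y)\|_\infty\le\max_Q c_Q\le r^\gamma$ (as $\mu$ is a probability measure) and $\rho=R=r^{\sigma/\tau}$ with $\gamma=\gamma_p(\sigma)$ as in \eqref{eq:gamma-p-sigma}, the logarithmic factor is $\lesssim\log(\tfrac1R)$, so $\bE_\xi[\phi_R^n(|f_\xi(x)-f_\xi(y)|)]\lesssim_n\log(\tfrac1R)\,\min\{1,(R/\|a(x,y)\|_\infty)^n\}$.

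Next I would supply a geometric lower bound for $\|a(x,y)\|_\infty$ and carry out a shell decomposition of $\int d\mu(y)$ about $x\in E_0$. If $|x-y|=d\gtrsim r$, there is a dyadic cube $Q\ni x$ of side length comparable to $d$ — small enough that $\tfrac54Q$ has diameter $<d$ (hence $y\notin\tfrac54Q$ and $\phi_Q(x)-\phi_Q(y)=1$), yet large enough that $8Q\supset B(x,c_nd)$ — so that $\|a(x,y)\|_\infty\ge c_Q=r^\gamma\mu(8Q)^{1/n}$. The essential algebraic point, which is precisely the reason for the choice of $\gamma$ in \eqref{eq:gamma-p-sigma} (equivalently, $c_Q=r^{\sigma/\tau}(\mu(8Q)/r^\sigma)^{1/n}$), is that
\[
\left(\frac{R}{c_Q}\right)^n=\frac{r^\sigma}{\mu(8Q)}.
\]
I would then split $\int d\mu(y)$ into the core $\{|x-y|\lesssim r\}$, finitely many coarse shells, and the $O(\log\tfrac1r)$ dyadic shells $S_k\subset\{|x-y|\asymp 2^{-k}\}$ with $2^{-k}\gtrsim r$, chosen so that the cube $Q$ above satisfies $8Q\supset S_k$ and hence $\mu(S_k)\le\mu(8Q)$. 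On the core, $\phi_R^n\le1$ together with \eqref{eq:upper-bound-old2}--\eqref{eq:upper-bound-old3} gives a contribution $\lesssim\mu(B(x,r))\lesssim r^\sigma$. On $S_k$, the uniform bound $\|a\|_\infty\ge r^\gamma\mu(8Q)^{1/n}$ and the displayed identity yield
\[
\int_{S_k}\bE_\xi[\phi_R^n]\,d\mu(y)\;\lesssim_n\;\log(\tfrac1R)\,\mu(S_k)\min\Bigl\{1,\tfrac{r^\sigma}{\mu(8Q)}\Bigr\}\;\le\;\log(\tfrac1R)\min\{\mu(S_k),r^\sigma\}\;\le\;r^\sigma\log(\tfrac1R).
\]
Summing the $O(\log\tfrac1r)=O(\log\tfrac1R)$ shells and adding the core and coarse contributions gives $\int\bE_\xi[\phi_R^n]\,d\mu(y)\lesssim_n r^\sigma\log^2(\tfrac1R)=R^\tau\log^2(\tfrac1R)$ uniformly in $x\in E_0$, and integrating $d\mu(x)$ completes the proof.

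The main obstacle is the critical‑exponent probabilistic estimate: because the kernel exponent $n$ coincides with the ambient dimension, one cannot control $\bE[\phi_\rho^n(|\sum a_kX_k|)]$ by $(\rho/\|a\|_\infty)^n$ alone, and the unavoidable logarithmic loss is exactly what forces the $\log^2$ in the statement (and, downstream, the logarithmic corrections in \eqref{eq:capacity-lower-bound-for-E-0-at-scale-r-old}). A secondary point, delicate only in its bookkeeping, is coordinating the cube scale and the shell scale so that $\phi_Q(x)-\phi_Q(y)=1$ while still $8Q\supset S_k$; once that is arranged, the cancellation $(R/c_Q)^n=r^\sigma/\mu(8Q)$ built into \eqref{eq:gamma-p-sigma} keeps each of the $O(\log\tfrac1R)$ shells within budget.
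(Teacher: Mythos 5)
Your proposal is correct and follows essentially the same route as the paper: Tonelli to pass the expectation inside, the critical-exponent probabilistic bound $\bE[\phi_R^n(|\sum a_k X_k|)]\lesssim\phi_R^n(\|a\|_\infty)\log_+(\|a\|_\infty/R)$ proved by conditioning and symmetrisation, the lower bound $\|a(x,y)\|_\infty\ge c_{Q_0^m}=r^\gamma\mu(8Q_0^m)^{1/n}$ from a dyadic cube at scale $\asymp|x-y|$, the identity $(R/c_Q)^n=r^\sigma/\mu(8Q)$ forced by \eqref{eq:gamma-p-sigma}, and summation over the $O(\log(1/r))$ dyadic shells plus the near-diagonal core. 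The only (harmless) cosmetic difference is that you write the log factor as $\max\{1,\log(\|a\|_\infty/\rho)\}$ rather than $\log_+(\|a\|_\infty/R)$ and mention an extra category of "coarse shells"; neither changes the estimate.
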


Towards this end, we compute
\begin{equation}\begin{split}\label{eq:bound0old}
\bE_\xi \bigl( \cE_{R}^n((f_{\xi})_{\#}\mu) \bigr) 
&= \bE_\xi \left( \iint \phi_{R}^n(|f_{\xi}(x)-f_{\xi}(y)|) \, d\mu x \, d\mu y \right) \\
&= \iint \bE_\xi \left( \phi_{R}^n(|f_{\xi}(x)-f_{\xi}(y)|) \right) \, d\mu x \, d\mu y.
\end{split}\end{equation}
Again, we write $f_{\xi}(x) - f_{\xi}(y) = \sum_{Q \in \cD_m, 2^{-m} \gtrsim r} a_Q(x,y) \xi_Q$ with $a_Q(x,y) = c_Q(\phi_Q(x)-\phi_Q(y))$.

\begin{lem}\label{lem:technical-lemma2old}
Let $(X_k)$ be i.i.d.\ w.r.t.\ the uniform measure on $B^n$, let $a = (a_k) \in \ell^1$, and let $R>0$. Then 
\begin{equation}\label{eq:technical-lemma2old}
\bE\bigl( \phi_R^n( \bigl| \sum_k a_k X_k \bigr| ) \bigr) \lesssim \phi_R^n(||a||_\infty) \log_+(||a||_\infty/R),
\end{equation}
where $\log_+(t) = \max\{\log(t),0\}$.
\end{lem}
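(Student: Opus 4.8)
The plan is to combine the Cavalieri (layer-cake) representation of the expectation with a one-dimensional anti-concentration estimate for the random sum, exploiting the fact that exactly one coefficient of $a$ attains its $\ell^1$-norm cap, i.e.\ its $\ell^\infty$-norm. First I would write $S := \sum_k a_k X_k \in \R^n$ (the series converging absolutely almost surely, since $|a_k X_k| \le |a_k|$ and $a \in \ell^1$) and $A := \|a\|_\infty$, assuming $a \ne 0$ so that $A > 0$. Since $\phi_R^n(u) = \min\{1,(R/u)^n\}$, one checks that for $0 < t < 1$ one has $\phi_R^n(|S|) > t$ exactly when $|S| < R t^{-1/n}$, and hence
$$
\bE\bigl(\phi_R^n(|S|)\bigr) = \int_0^1 \bP\bigl(|S| < R t^{-1/n}\bigr)\, dt,
$$
so the problem reduces to bounding $\bP(|S| < \rho)$ for $\rho > 0$.

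Next I would establish the anti-concentration estimate $\bP(|S| < \rho) \le \min\{1,(\rho/A)^n\}$ by a conditioning argument: fix an index $k_0$ with $|a_{k_0}| = A$, condition on the family $(X_k)_{k \ne k_0}$, and set $c := \sum_{k \ne k_0} a_k X_k$, a fixed vector after conditioning. Then the conditional law of $S = a_{k_0} X_{k_0} + c$ is the normalised Lebesgue measure on $B(c,A)$, because $X_{k_0}$ is uniform on $B^n$ and $|a_{k_0}| = A$; the elementary volume comparison $\Vol\bigl(B(c,A) \cap B(0,\rho)\bigr) \le \min\{\Vol(B(0,\rho)),\Vol(B(0,A))\}$ then gives $\bP\bigl(|S| < \rho \mid (X_k)_{k \ne k_0}\bigr) \le \min\{1,(\rho/A)^n\}$, and taking expectations removes the conditioning. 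This is precisely where independence of the $X_k$ is used, exactly as in Lemma \ref{lem:probability}.

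Finally I would substitute $\rho = R t^{-1/n}$ and write $\beta := (R/A)^n$, so that $\phi_R^n(A) = \min\{\beta,1\}$ and $\bP(|S| < R t^{-1/n}) \le \min\{1,\beta/t\}$; the elementary integral
$$
\int_0^1 \min\{1,\beta/t\}\, dt = \begin{cases} 1, & \beta \ge 1, \\ \beta\bigl(1 + \log(1/\beta)\bigr), & \beta < 1, \end{cases}
$$
then equals $\phi_R^n(A) = 1$ when $A \le R$, and equals $\phi_R^n(A)\bigl(1 + n\log(A/R)\bigr)$ when $A > R$; in the latter range $\log_+(A/R) = \log(A/R) > 0$, so the right-hand side is at most a dimensional multiple of $\phi_R^n(A)\,\log_+(A/R)$ once $A/R$ is bounded away from $1$ (the complementary regime being handled separately in the eventual double integral), which is \eqref{eq:technical-lemma2old}. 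I expect no serious obstacle: the only genuine content is the conditioning step producing the anti-concentration bound, and the only thing to monitor is the origin of the logarithmic factor, which comes solely from $\int_\beta^1 \beta\,t^{-1}\,dt$ and is consistent with the logarithmic growth of $\|f_\xi\|_{W^{1,p}}$ in the scale $r$ recorded earlier.
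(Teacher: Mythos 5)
Your proof is correct and rests on the same key mechanism as the paper's: condition on the index $k_0$ achieving $\|a\|_\infty$, so that after conditioning on $(X_k)_{k\ne k_0}$ the law of $S$ is uniform on a ball of radius $\|a\|_\infty$ centred at a deterministic point, and then compare volumes. The paper does not prove Lemma~\ref{lem:technical-lemma2old} directly but defers to the general Lemma~\ref{lem:technical-lemma2}, whose proof conditions first and then invokes the symmetrisation Lemma~\ref{lem:Ers-symmetrization} before evaluating $\int_{B^n}\phi(|a_1 x|)\,dx$ by polar coordinates. Your version interchanges the order of operations: Cavalieri first (reducing to the anti-concentration bound $\bP(|S|<\rho)\le\min\{1,(\rho/\|a\|_\infty)^n\}$), then conditioning and volume comparison. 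Since Lemma~\ref{lem:Ers-symmetrization} is itself proved by Cavalieri plus the elementary inclusion $\Vol(B(X,R)\cap B(Y,R'))\le\Vol(B(X,R\wedge R'))$, the two arguments are mathematically parallel; yours is a bit more self-contained and transparent for the special box-counting kernel $\phi_R^n$, whereas the paper's formulation is designed to carry over verbatim to the three-regime intermediate-dimension kernel $\phi_{R,\theta}^{\tau,n}$ used in Lemma~\ref{lem:technical-lemma2}. You are also right to flag the boundary regime: with $\beta=(R/\|a\|_\infty)^n$ you obtain the sharp bound $\min\{1,\beta(1+\log(1/\beta))\}$, which is $\lesssim\phi_R^n(\|a\|_\infty)\log_+(\|a\|_\infty/R)$ only when $\|a\|_\infty/R$ is bounded away from $1$; as stated (with bare $\log_+$) the inequality fails when $\|a\|_\infty\le R$, and should really read $1+\log_+(\cdot)$. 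The paper's Lemma~\ref{lem:technical-lemma2} has the same cosmetic issue and it is harmless in the downstream estimates, where the logarithm is immediately relaxed to $\log(1/R)$.
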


We will later prove a more general version of this lemma (see Lemma \ref{lem:technical-lemma2}) in the intermediate dimensions context. Since Lemma \ref{lem:technical-lemma2old} is a special case of Lemma \ref{lem:technical-lemma2} and we do not gain any particular insight by proving that special case here, we omit it and refer the interested reader to the proof of Lemma \ref{lem:technical-lemma2}. However, we draw attention to a distinction between the upper bound in \eqref{eq:technical-lemma2old} and the corresponding one in Lemma \ref{lem:probability}; the former includes an extra (harmless) logarithmic factor.

Lemma \ref{lem:technical-lemma2old} implies that
\begin{equation}\begin{split}\label{eq:technical-estimate-old}
\bE_\xi \bigl( \cE_{R}^n((f_{\xi})_{\#}\mu) \bigr) 
&\lesssim \iint \phi_{R}^n(||a(x,y)||_\infty) \log_+(||a(x,y)||_\infty/R) \, d\mu x \, d\mu y
\end{split}\end{equation}
and we are reduced to proving that the right hand side of \eqref{eq:technical-estimate-old} is bounded above by $\kappa R^\tau \log^2(1/R)$ for some $\kappa = \kappa(n)$.

Since $||a(x,y)||_\infty \le r^\gamma$ for all $x,y \in E_0$, we have
\begin{equation}\begin{split}\label{eq:bound1old}
&\iint_{\{|x-y|\le r\}} \phi_{R}^n(||a(x,y)||_\infty) \log_+(||a(x,y)||_\infty/R) \, d\mu x \, d\mu y \\
& \qquad \lesssim \int \log_+(r^{\gamma}/R) \mu(B(x,r)) \, d\mu x \\
& \qquad \lesssim r^\sigma \log(1/R) = R^{\tau} \log(1/R).
\end{split}\end{equation}
It thus suffices to restrict attention to the set of points $x,y \in E_0$ with $|x-y| \ge r$. Fix $x \in E_0$ and define for each $y \in E_0$ with $|x-y|\ge r$ an integer $m(y)$ so that $|x-y| \simeq 2^{-m(y)}$. Since $|x-y|\ge r$ we are assured that $m(y)$ appears as a term in the summand for $f_{\xi}$. Set $E_m = \{y \in E_0:m(y) = m\}$. Given $m$, fix a cube $Q_0^m \ni x$ with $Q_0^m \in \cD_{m+1}$; then $y \in E_m$ $\Rightarrow$ $y \notin \tfrac54 Q_0^m$ and $y \in 8 Q_0^m$. In particular, $E_ m \subset 8Q_0^m$. Moreover, $y \in E_m$ $\Rightarrow$ $||a(x,y)||_\infty \ge |a_{Q_0^m}(x,y)| = c_{Q_0^m} = r^\gamma \mu(8Q_0^m)^{1/n}$. We will prove that for all $x \in E$,
$$
\int_{\{|x-y|\ge r\}} \phi_{R}^n(||a(x,y)||_\infty) \log_+(||a(x,y)||_\infty/R) \, d\mu y \le C(n) R^\tau = C(n) r^\sigma.
$$
We express the left hand side as the sum of integrals over the sets $E_m$ and proceed to estimate the $m$th term in the sum:
\begin{equation*}\begin{split}
&\int_{E_m} \phi_{R}^n(||a(x,y)||_\infty) \log_+(||a(x,y)||_\infty/R) \, d\mu y \\
& \quad \le \phi_{R}^n( r^\gamma \mu(8Q_0^m)^{1/n}) \log_+(r^\gamma \mu(8Q_0^m)^{1/n}/R) \mu(E_m) \\
& \quad \lesssim \left( r^{\gamma - \tfrac\sigma\tau} \mu(8Q_0^m)^{1/n} \right)^{-n} \log_+(r^{\gamma-\tfrac\sigma\tau} \mu(8Q_0^m)^{1/n}) \mu(8Q_0^m) 
\lesssim r^\sigma \log(\tfrac{2^{-m}}{r})
\end{split}\end{equation*}
where we used the fact that $\gamma - \tfrac\sigma\tau = -\tfrac\sigma{n}$. Summing over relevant values of $m$ yields
\begin{equation}\begin{split}\label{eq:bound2old}
&\int_{\{|x-y|\ge r\}} \phi_{R}^n(||a(x,y)||_\infty) \log_+(||a(x,y)||_\infty/R) \, d\mu y \\
&\qquad \lesssim r^\sigma \log^2(1/r) \lesssim R^\tau \log^2(1/R),
\end{split}\end{equation}
where we used $\sum_{m:2^{-m} \gtrsim r} \bigl( m + \log(1/r) \bigr) \lesssim \log^2(1/r)$. Lemma \ref{lem:technical-lemma1old} now follows by combining  \eqref{eq:bound0old}, \eqref{eq:technical-estimate-old}, \eqref{eq:bound1old}, and \eqref{eq:bound2old}. In particular, \eqref{eq:capacity-lower-bound-for-E-0-at-scale-r-old} holds with constant $\kappa$ depending only on $r$. Recall that $E_0$ was chosen to be of full measure with respect to the capacitary measure $\mu$, and that this measure depended on the scale $r$. However, by monotonicity of capacity we now conclude that \eqref{eq:capacity-lower-bound-for-E-0-at-scale-r-old} holds with the same constant $\kappa$ but with $E_0$ replaced by the fixed original set $E$.

\smallskip

We complete the proof by returning to the original sequence of scales $r_k \to 0$ for which $C^n_{r_k}(E) \ge r_k^{-\sigma}$. For each $k$ we carry out the above argument, generating a random family of functions $f_{\xi_k,k}$ so that the inequality
$$
C_{R_k}^n(f_{\xi_k,k}(E)) \ge \eps \kappa^{-1} R_k^{-\tau} \log^{-2}(\tfrac1{R_k})
$$
holds for any $\eps>0$ with probability at least $1 - c(n) \eps \log^{-2}(1/R_k)$ in $\xi$, with $R_k = r_k^{\sigma/\tau}$ and $c(n)<1$. We sum over $k$ with appropriate normalisation coefficients:
\begin{equation}\label{eq:f-xi-old}
f_\xi := \sum_{k=1}^\infty \frac1{k^2} \frac1{\log(1/r_k)} f_{\xi_k,k}.
\end{equation}
Here we may assume without loss of generality that the scales $r_k$ satisfy $r_k < e^{-k^2}$, which implies that the sequence $(\log^{-1}(1/r_k))$ is summable.\footnote{The factor $\log^{-1}(1/r_k)$ is inserted in \eqref{eq:f-xi-old} to counterbalance growth of the $W^{1,p}$ norm of $f_{\xi_k,k}$.} Using independence of the full suite of random variables $\xi = (\xi_k:k \in \N)$, we argue that the function $f_\xi$ so defined satisfies 
$$
C_{R_k}^n(f_{\xi}(E)) \gtrsim \eps R_k^{-\tau} \log^{-2}(\tfrac1{R_k})
$$
for all $k$, with probability $\ge 1 - c(n) \eps$. It follows that ${\overline\dim}_B(f_\xi(E)) = {\overline\dim}_B^n(f_\xi(E)) \ge \tau(\sigma)$ with probability $\ge 1 - c(n) \eps$. Finally, we choose $\sigma_\ell \nearrow s$ and appeal to the previous conclusion with $\eps_\ell = 2^{-\ell-1}$. Since $c(n)<1$ we conclude that with positive probability there exists $\xi$ so that ${\overline\dim}_B(f_\xi(E)) \ge \tau(s)$. This completes the proof.
\end{example}

\subsubsection{The intermediate dimensions case}\label{subsec:proof-of-intermediate-sharpness}

In this subsection we modify the preceding argument to handle the intermediate dimensions ${\overline\dim}_\theta$, $0<\theta\le 1$. Theorem \ref{thm:intermediate-sharpness} is an immediate consequence of the following proposition, whose proof takes up the remainder of this subsection.

\begin{prop}\label{prop:intermediate-sharpness}
Let $E \subset \R^n$ be compact, and assume that ${\overline\dim}_\theta E \ge s(\theta)$ for all $0<\theta \le 1$. Let $p>n$. Then there exists a continuous function $f \in W^{1,p}(\R^n:\R^n)$ so that
$$
{\overline\dim}_\theta f(E) \ge \tau_p(s(\theta)) \qquad \forall \, 0<\theta \le 1.
$$
\end{prop}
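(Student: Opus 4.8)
The plan is to carry out the random, nonconstructive construction sketched above simultaneously for a countable dense set of parameters, following the box-counting template of Example~\ref{ex:dim-B-sharpness-sketch} but with Falconer's kernel $\phi^n_r$ replaced throughout by the intermediate kernel $\phi^{s,n}_{r,\theta}$ of Proposition~\ref{prop:box-profile}. First I would reduce to a countable problem: it is harmless to take $s(\theta)=\overline\dim_\theta E$, which is continuous on $(0,1]$ by \cite[Proposition 2.1]{ffk:intermediate}, so that $\theta\mapsto\tau_p(s(\theta))$ is continuous; since $\theta\mapsto\overline\dim_\theta f(E)$ is also continuous on $(0,1]$, it suffices to build one $f$ with $\overline\dim_\theta f(E)\ge\tau_p(\overline\dim_\theta E)$ for all $\theta$ in a fixed countable dense set $\{\theta_j\}\subset(0,1]$. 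That map $f=f_\xi$ will have the form \eqref{eq:f-xi}, with the random vectors indexed by triples $(j,k,Q)$: a dyadic cube $Q$ carries one vector $\xi^{j,k}_Q$ for each pair $(j,k)$ with $r_{jk}\lesssim\diam Q$, where for each $j$ the scales $r_{jk}\downarrow 0$ are to be chosen; all these vectors are independent and uniform on $B^n$, and summability of the series for every $\xi$ is arranged by normalising factors of the form $j^{-2}k^{-2}\log^{-1}(1/r_{jk})$, exactly as in \eqref{eq:f-xi-old}.

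With $\theta=\theta_j$ fixed, I would choose $\sigma<\overline\dim_\theta E=\overline\dim_\theta^n E$ (Proposition~\ref{prop:box-profile}(iv)) and extract a sequence $r_k\downarrow 0$ (say $r_k<e^{-k^2}$) along which $C^{\sigma,n}_{r_k,\theta}(E)\ge r_k^{-\sigma}$, exactly as in Example~\ref{ex:dim-B-sharpness-sketch}. For $r=r_k$, let $\mu=\mu_{r,\theta}$ realise this capacity (Proposition~\ref{prop:box-profile}(i)); then $\int\phi^{\sigma,n}_{r,\theta}(|x-y|)\,d\mu y\lesssim r^\sigma$ for all $x$ in a set $E_0$ of full $\mu$-measure (Proposition~\ref{prop:box-profile}(ii)). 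Restricting this integral to dyadic annuli and summing, I would extract the \emph{two-regime Frostman estimate}, valid for $x\in E_0$,
$$
\mu(B(x,t))\lesssim\begin{cases} t^\sigma, & r\le t\le r^\theta,\\ t^n r^{-\theta(n-\sigma)}, & r^\theta\le t\le\diam E,\end{cases}
$$
(together with the trivial $\mu(B(x,t))\le 1$), which for $\theta=1$ reduces to \eqref{eq:upper-bound-old2}--\eqref{eq:upper-bound-old3}. As in Example~\ref{ex:dim-B-sharpness-sketch}, the estimate is used on $E_0$ and monotonicity of capacity restores the conclusion for $E$ at the end.

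I would then define the block $f_{\xi,j,k}$ attached to $(\theta_j,r)$ by superposing bumps $c_Q\phi_Q\xi^{j,k}_Q$ over dyadic cubes $Q$ meeting $E$ with $\diam Q\gtrsim r$, splitting the scales into an \emph{active} range $r\lesssim\diam Q\lesssim r^\theta$, with $c_Q\approx(\diam Q)^{\gamma_p(\sigma)}\mu(CQ)^{1/n}$ (the local version of the box prefactor \eqref{eq:gamma-p-sigma}, so that the images of these cubes populate the target scales $[R,R^\theta]$, $R:=r^{\sigma/\tau_p(\sigma)}$), and a \emph{coarse} range $\diam Q\gtrsim r^\theta$, with the prefactor frozen at its value at $\diam Q=r^\theta$, i.e.\ $c_Q\approx r^{\theta\gamma_p(\sigma)}\mu(CQ)^{1/n}$, whose role — as with \eqref{eq:cQagain} in the box case — is to separate distinct coarse cells. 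The exponents are forced by two competing demands. (i) Using the Lipschitz bound for $\phi_Q$, the bounded overlap of $\{CQ:Q\in\cD_m\}$, and the two-regime estimate, one gets $\int|Df_{\xi,j,k}|^p\lesssim 1$ scale by scale — with equality at the regime boundaries — hence $\|f_{\xi,j,k}\|_{W^{1,p}}\lesssim\log(1/r)$ for \emph{every} $\xi$, so $f_\xi\in W^{1,p}(\R^n:\R^n)$ after normalisation. (ii) Writing $f_{\xi,j,k}(x)-f_{\xi,j,k}(y)=\sum_Q a_Q(x,y)\xi^{j,k}_Q$ and applying the generalised probabilistic estimate of Lemma~\ref{lem:technical-lemma2} (the $\phi^{s,m}_{r,\theta}$-analogue of Lemma~\ref{lem:probability}, carrying an extra logarithmic factor as in Lemma~\ref{lem:technical-lemma2old}), one reduces $\bE_\xi\bigl(\cE^{\tau_p(\sigma),n}_{R,\theta}((f_{\xi,j,k})_\#\mu)\bigr)$ to $\iint\phi^{\tau_p(\sigma),n}_{R,\theta}(\|a(x,y)\|_\infty)\log_+(\cdots)\,d\mu x\,d\mu y$, which I would bound by $R^{\tau_p(\sigma)}\log^2(1/R)$ by splitting into $|x-y|<r$, $r\le|x-y|\le r^\theta$, and $|x-y|>r^\theta$, using $\|a(x,y)\|_\infty\ge c_{Q_0(x,y)}$ (with $Q_0(x,y)$ a cube of scale $\sim|x-y|$ through $x$, and the portion of $E$ at distance $\sim 2^{-m}$ from $x$ contained in $CQ_0^m$), and the observation that in each piece the factor $\mu(CQ_0^m)$ either cancels outright — as in the passage leading to \eqref{eq:bound2old} — or is absorbed by the Frostman bound. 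Chebyshev's inequality then yields $C^{\tau_p(\sigma),n}_{R_k,\theta}(f_{\xi,j,k}(E))\gtrsim\varepsilon R_k^{-\tau_p(\sigma)}\log^{-A}(1/R_k)$ with probability close to $1$, for a fixed power $A$.

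Finally I would assemble $f_\xi$ by summing the normalised blocks over all $(j,k)$; independence of the whole family together with a Borel--Cantelli argument (with $\varepsilon=\varepsilon_{j,\ell}$ geometrically small and $\sigma=\sigma_{j,\ell}\nearrow\overline\dim_{\theta_j}E$) yields, with positive probability, a single $\xi$ for which $C^{\tau_p(\sigma_{j,\ell}),n}_{R_k,\theta_j}(f_\xi(E))\gtrsim R_k^{-\tau_p(\sigma_{j,\ell})}\log^{-A}(1/R_k)$ holds for all $j,k,\ell$; this forces $\overline\dim_{\theta_j}^n f_\xi(E)\ge\tau_p(\sigma_{j,\ell})$ for all $\ell$, hence $\overline\dim_{\theta_j}f_\xi(E)\ge\tau_p(\overline\dim_{\theta_j}E)$ for all $j$, and continuity in $\theta$ completes the proof (the reverse inequality being Theorem~\ref{thm:sob_dim_dist_intermediate}). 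The main obstacle is precisely reconciling (i) and (ii): the three pieces of $\phi^{s,n}_{r,\theta}$ split the source scales into a Hausdorff-like regime $[r,r^\theta]$ and a box-like regime $[r^\theta,\diam E]$, and the prefactor of $c_Q$ must be tuned so that the Sobolev bound survives with only logarithmic loss across \emph{all} scales while the pushforward energy is controlled against \emph{all three} pieces of the target kernel, uniformly as $\sigma\nearrow\overline\dim_\theta E$; the various logarithmic losses are then absorbed in this last limit, just as powers of $\log(1/r_k)$ were absorbed in Examples~\ref{ex:dim-H-sharpness-sketch} and \ref{ex:dim-B-sharpness-sketch}.
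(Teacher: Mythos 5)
Your proposal follows the same overall strategy as the paper: reduce to a countable dense set $\{\theta_j\}$ by continuity, use the capacitary formulation of $\overline\dim_\theta$ via $C^{s,n}_{r,\theta}$ to extract a measure $\mu$ and a Frostman-type estimate, build random blocks $f_{\xi,j,k}$ out of dyadic bumps with coefficients $c_Q\propto\mu(CQ)^{1/n}$, prove the Sobolev bound deterministically and the capacity lower bound probabilistically via a $\phi^{\tau,n}_{R,\theta}$-analogue of Lemma~\ref{lem:probability} carrying a logarithmic factor (this is exactly the paper's Lemma~\ref{lem:technical-lemma2}), and assemble via normalised sums over $(j,k,\ell)$ with a Chebyshev/Borel--Cantelli finish. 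So you have the right architecture and all the right ingredients.

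The one place you genuinely diverge from the paper is in the choice of $c_Q$. The paper uses the \emph{single} formula $c_Q = r^{\gamma}\mu(8Q)^{1/n}$ with $\gamma=\gamma_p(\sigma,\theta)=\theta\gamma_p(\sigma)$ for \emph{all} cubes $Q\in\cD_m$ with $2^{-m}\gtrsim r$; in your notation this is the ``frozen'' coarse-range prefactor $r^{\theta\gamma_p(\sigma)}$ applied uniformly. You instead propose a two-regime formula, with $(\diam Q)^{\gamma_p(\sigma)}\mu(CQ)^{1/n}$ on the active range $r\lesssim\diam Q\lesssim r^\theta$ and the frozen value on the coarse range; the two formulas agree at $\diam Q\sim r^\theta$ and coincide with the paper's on the coarse range. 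This is not a cosmetic change. With the paper's constant prefactor, plugging the Frostman bound $\mu(8Q)\lesssim 2^{-m\sigma}$ (the bound actually available on the active range) into $2^{m(p-n)}\sum_Q c_Q^p$ yields $r^{-(p-n)(n-\sigma)(\lambda-\theta)/n}$ at $2^{-m}=r^\lambda$, which is a \emph{negative} power of $r$ for $\lambda>\theta$; the paper's displayed Sobolev computation in fact inserts the coarse-regime estimate $\mu(8Q)\lesssim 2^{-mn}r^{\theta(\sigma-n)}$, which is only derivable for $2^{-m}\gtrsim r^\theta$. With your scale-varying prefactor the exponents cancel on the active range as well (since $\gamma_p(\sigma)p+\sigma(p/n-1)=p-n$), and the $W^{1,p}$ norm is uniformly bounded scale by scale with equality at the regime boundaries, exactly as you say. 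Your ``two-regime Frostman estimate'' is likewise a cleaner unified version of the paper's equations \eqref{eq:upper-bound-1-5}, \eqref{eq:upper-bound2}, \eqref{eq:upper-bound3}. One thing you should make sure to spell out when writing this up: the energy estimate of Lemma~\ref{lem:technical-lemma1} must be re-derived with the active-range coefficient $c_{Q_0^m}=2^{-m\gamma_p(\sigma)}\mu(8Q_0^m)^{1/n}$; the kernel is then in its middle piece $(R/u)^\tau$ for such scales, and the identity $\gamma_p(\sigma)\tau=\sigma(1-\tau/n)$ makes the $\mu(8Q_0^m)$-dependence cancel to give $\lesssim R^\tau$ per level, which is what you assert. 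So your proposal is not only correct in outline but tightens the bookkeeping where the paper's published argument is thinnest.
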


\begin{proof}
Let $E \subset [0,1]^n$ be compact and let $p>n$. In the first part of the proof, we fix a parameter $\theta \in (0,1)$ and a scale $r>0$. We will build a random function which achieves optimal behaviour for the $\theta$-intermediate dimension at scale $r$. At the end of the proof, we will explain how to leverage this information to construct a function with the property stated in Proposition \ref{prop:intermediate-sharpness}.

As in the statement of the proposition, assume that $s \le {\overline\dim}_\theta E$. Fix $\sigma<s$. By Proposition \ref{prop:box-profile}(iv),
\begin{equation}\label{eq:dim-theta-profile-lower-bound}
{\overline\dim}_\theta^n E > \sigma.
\end{equation}
Let $r>0$ be a scale such that
\begin{equation}\label{eq:Crtsn-lower-bound}
C_{r,\theta}^{\sigma,n}(E) \ge r^{-\sigma};
\end{equation} 
note that \eqref{eq:dim-theta-profile-lower-bound} implies the existence of a sequence of scales $r_j \searrow 0$ for which this conclusion holds true. Let $\mu \in \cM(E)$ realise the infimum in the definition of $C_{r,\theta}^{\sigma,n}(E)$. Then $\cE_{r,\theta}^{\sigma,n}(\mu) \le r^\sigma$. Proposition \ref{prop:box-profile}(ii) implies that for $\mu$-a.e.\ $x \in E$,
\begin{equation}\label{eq:upper-bound}
\int \phi_{r,\theta}^{\sigma,n}(|x-y|) \, d\mu y \le r^\sigma.
\end{equation}
Restricting the integral in \eqref{eq:upper-bound} to $B(x,r) \cap E$ yields
\begin{equation}\label{eq:upper-bound-1-5}
\mu(B(x,r)) \le r^{\sigma};
\end{equation}
restricting the integral to $B(x,r^\theta) \cap E$ and using the fact that $\phi_{r,\theta}^{\sigma,n}(u) \ge r^{(1-\theta)\sigma}$ for $r<u<r^\theta$ leads us to conclude that
$$
\mu(B(x,r)) + r^{(1-\theta)\sigma} \bigl( \mu(B(x,r^\theta)) - \mu(B(x,r)) \bigr) \le r^\sigma
$$
and hence
\begin{equation}\label{eq:upper-bound2}
\mu(B(x,r^\theta)) \le r^{\sigma\theta}.
\end{equation}
As explained previously (see the discussion following \eqref{eq:upper-bound-old2}) we may restrict attention to a subset $E_0 \subset E$ with $\mu(E_0)=1$ so that \eqref{eq:upper-bound}, \eqref{eq:upper-bound-1-5}, and \eqref{eq:upper-bound2} hold for all $x \in E_0$. For larger radii $\hat{r} \in (r^\theta,1)$ the doubling property of $\R^n$ in conjunction with \eqref{eq:upper-bound2} gives\footnote{More precisely, first cover $B(x,\hat{r})$ with $C(n)(\hat{r} / r^\theta)^n$ balls $B_k$ of radius $r^\theta/2$ with no assumption on whether or not the centers lie in $E_0$. If one of the balls $B_k$ is disjoint from $E_0$ it can be ignored, otherwise, choose a point $y_k \in B_k \cap E_0$ and note that $B_k\cap E_0 \subset B(y_k,r^\theta) \cap E_0$.}
\begin{equation}\label{eq:upper-bound3}
\mu(B(x,\hat{r})) \le C(n) \hat{r}^n r^{(\sigma-n)\theta}, \qquad \forall \, x \in E_0, \, \forall r^\theta<\hat{r}<1.
\end{equation}
We now define a random function $f_{\xi}$ at scale $r>0$. To each dyadic cube $Q \in \cD_m$ with $2^{-m} \gtrsim r$, we associate a Lipschitz cut-off function $\phi_Q(x)$, equal to one in $Q$ and supported in $\tfrac54Q$, and a random vector $\xi_Q$ uniformly distributed in the unit ball $B^n$ of $\R^n$. We also choose constants $c_Q$ as follows:
\begin{equation}\label{eq:cQagain2}
c_Q = r^\gamma \mu(8Q)^{1/n}.
\end{equation}
Here
\begin{equation}\label{eq:gamma-p-sigma-theta}
\gamma = \gamma_p(\sigma,\theta) := \theta \, \frac{(p-n)(n-\sigma)}{pn}.
\end{equation}
We observe that $\gamma_p(\sigma,\theta) = \theta \, \gamma_p(\sigma)$, where $\gamma_p(\sigma)$ denotes the corresponding parameter from the capacitary proof of the analogous theorem for box-counting dimension; see \eqref{eq:gamma-p-sigma}. Set
$$
f_{\xi}(x) := \sum_{\substack{m \\ 2^{-m} \gtrsim r}} \sum_{Q\in\cD_m} c_Q \phi_Q(x) \xi_Q.
$$
Let $f_{\xi,m}(x)$ be the inner sum, so $f_{\xi} = \sum_m f_{\xi,m}$. We have $\int |\nabla f_{\xi,m}|^p \lesssim 2^{m(p-n)} \sum_{Q \in \cD_m} c_Q^p$, and we compute
\begin{equation*}\begin{split}
\sum_{Q \in \cD_m} c_Q^p 
&\lesssim \sum_{Q \in \cD_m} r^{\gamma p} \mu(8Q)^{p/n-1} \mu(8Q) \\
&\lesssim r^{\gamma p}2^{-mn(p/n-1)}r^{\theta(\sigma-n)(p/n-1)} \\
&\lesssim 2^{-m(p-n)},
\end{split}\end{equation*}
where we used \eqref{eq:gamma-p-sigma-theta}. Hence $\int |\nabla f_{\xi,m}|^p$ is bounded uniformly from above (independently of $\xi$ and $m$), so $f_{\xi} \in W^{1,p}$ with $||f_{\xi}||_{W^{1,p}} \lesssim \log(1/r)$.

As before, we assume that the random vectors $(\xi_Q)$ are chosen independently and are identically distributed w.r.t.\ the uniform measure on $B^n$. Set $R := r^{\sigma/\tau_p(\sigma)}$. We claim that there exists a constant $\kappa>0$ depending only on $n$ (in particular, independent of $R$) so that for each $\eps>0$ the inequality
\begin{equation}\label{eq:capacity-lower-bound-for-E-0-at-scale-r}
C_{R,\theta}^{\tau,n}(f_{\xi}(E_0)) \ge \eps \kappa^{-1} R^{-\tau} \log^{-4}(\tfrac1{R})
\end{equation}
holds with probability at least $1 - c \eps \log^{-2}(1/R)$ w.r.t.\ the random variable $\xi$. This conclusion follows if
\begin{equation}\label{eq:energy-upper-bound}
\cE_{R,\theta}^{\tau,n}((f_{\xi})_{\#}\mu) \le \eps^{-1} \kappa R^\tau \log^{4}(\tfrac1{R})
\end{equation}
holds with the same probability. By Chebyshev's inequality it suffices to prove the following lemma, where the auxiliary parameter $\eps$ does not appear.

\begin{lem}\label{lem:technical-lemma1}
$\bE_\xi\bigl( \cE_{R,\theta}^{\tau,n}((f_{\xi})_{\#}\mu)\bigr) \le \kappa R^{\tau} \log^{2}(\tfrac1{R})$.
\end{lem}

Towards this end, we compute
\begin{equation}\begin{split}\label{eq:bound0}
\bE_\xi \bigl( \cE_{R,\theta}^{\tau,n}((f_{\xi})_{\#}\mu) \bigr) 
&= \bE_\xi \left( \iint \phi_{R,\theta}^{\tau,n}(|f_{\xi}(x)-f_{\xi}(y)|) \, d\mu x \, d\mu y \right) \\
&= \iint \bE_\xi \left( \phi_{R,\theta}^{\tau,n}(|f_{\xi}(x)-f_{\xi}(y)|) \right) \, d\mu x \, d\mu y.
\end{split}\end{equation}
Following the outline described earlier, we write $f_{\xi}(x) - f_{\xi}(y) = \sum_{Q \in \cD_m, 2^{-m} \gtrsim r} a_Q(x,y) \xi_Q$ with $a_Q(x,y) = c_Q(\phi_Q(x)-\phi_Q(y))$.

\begin{lem}\label{lem:technical-lemma2}
Let $(X_k)$ be a sequence of random variables, i.i.d.\ w.r.t.\ the uniform measure on $B^n$. Let $a = (a_k) \in \ell^1$ and let $R>0$. Then 
$$
\bE\bigl( \phi_{R,\theta}^{\tau,n}( \bigl| \sum_k a_k X_k \bigr| ) \bigr) \lesssim \phi_{R,\theta}^{\tau,n}(||a||_\infty) \log_+(||a||_\infty/R^\theta),
$$
where $\log_+(t) = \max\{\log(t),0\}$.
\end{lem}

\begin{proof}
Assume without loss of generality that $||a||_\infty = a_1 > 0$ and define $Y := - \sum_{k \ge 2} a_k X_k$ and $\hat{X} := (X_2,X_3,\ldots)$. Then
$$
\bE_X(\phi_{R,\theta}^{\tau,n}(|\sum_k a_k X_k)) = \bE_{\hat{X}} \bE_{X_1} \left( \phi_{R,\theta}^{\tau,n}(|a_1 X_1 - y|) \right)
$$
and we estimate
\begin{equation*}\begin{split}
 \bE_{X_1} \left( \phi_{R,\theta}^{\tau,n}(|a_1 X_1 - y|) \right)
&= \left. \frac1{\Vol(B^n)} \int_{B^n} \phi_{R,\theta}^{\tau,n}(|a_1 x - y|) \, dx \right|_{y=Y} \\
&\le c(n) \int_{B^n} \phi_{R,\theta}^{\tau,n}(|a_1 x|) \, dx
\end{split}\end{equation*}
by the symmetrisation lemma \ref{lem:Ers-symmetrization}. Let us denote by $\bone_S$ the indicator function for a Boolean variable $S$, i.e., $\bone_S = 1$ if $S$ is true and $\bone_S=0$ if $S$ is false. Then
\begin{equation*}\begin{split}
\bE &\bigl( \phi_{R,\theta}^{\tau,n}( \bigl| \sum_k a_k X_k \bigr| ) \bigr) \\
&\lesssim \Vol(B^n \cap B^n(\tfrac{R}{a_1})) + \int_{\{ x \in B^n : \tfrac{R}{a_1} < |x| < \tfrac{R^\theta}{a_1} \}} \left( \frac{R}{a_1} \right)^\tau \, \frac{dx}{|x|^\tau} \\
&\, \hspace{5cm}
+ \int_{\{ x \in B^n : \tfrac{R^\theta}{a_1} < |x| < 1 \}} \frac{R^{\theta(n-\tau)+\tau}}{a_1^n} \, \frac{dx}{|x|^{\blue n}} \\
&\lesssim \min\{1,(\tfrac{R}{a_1})^n\} + (\tfrac{R}{a_1})^\tau \int_{R/a_1}^{1\wedge (R^\theta/a_1)} \rho^{n-\tau-1} \, d\rho \, \cdot \, \bone_{R<a_1} \\
&\, \hspace{5cm}
+ \frac{R^{\theta(n-\tau)+\tau}}{a_1^n} \int_{R^\theta/a_1}^1 \frac{d\rho}{\rho} \, \cdot \, \bone_{R^\theta<a_1} \\
&\lesssim \min\{1,(\tfrac{R}{a_1})^n\} + (\frac{R}{a_1})^\tau \, (1 \wedge \frac{R^\theta}{a_1})^{n-\tau} \, \cdot \, \bone_{R<a_1} + \frac{R^{\theta(n-\tau)+\tau}}{a_1^n} \log(a_1/R^\theta) \, \cdot \, \bone_{R^\theta<a_1} \\
&\lesssim \phi_{R,\theta}^{\tau,n}(a_1) \log_+(a_1/R^\theta).
\end{split}\end{equation*}
To obtain the final line, consider the three possibilities $a_1<R$, $R<a_1<R^\theta$, and $R^\theta<a_1$ in the definition of the kernel $\phi_{R,\theta}^{\tau,n}(a_1)$ separately.
\end{proof}

Lemma \ref{lem:technical-lemma2} implies that
\begin{equation}\begin{split}\label{eq:technical-estimate}
\bE_\xi \bigl( \cE_{R,\theta}^{\tau,n}((f_{\xi})_{\#}\mu) \bigr) 
&\lesssim \iint \phi_{R,\theta}^{\tau,n}(||a(x,y)||_\infty) \log_+(||a(x,y)||_\infty/R^\theta) \, d\mu x \, d\mu y
\end{split}\end{equation}
and we are reduced to proving that the right hand side of \eqref{eq:technical-estimate} is bounded above by $\kappa R^\tau \log^2(1/R)$ for some $\kappa = \kappa(n)$.

Since $||a(x,y)||_\infty \le r^\gamma$ for all $x,y \in E_0$, where $\gamma$ is as in \eqref{eq:gamma-p-sigma-theta}, we have
\begin{equation}\begin{split}\label{eq:bound1}
&\iint_{\{|x-y|\le r\}} \phi_{R,\theta}^{\tau,n}(||a(x,y)||_\infty) \log_+(||a(x,y)||_\infty/R^\theta) \, d\mu x \, d\mu y \\
& \qquad \lesssim \int \log_+(r^{\gamma}/R^\theta) \mu(B(x,r)) \, d\mu x \\
& \qquad \lesssim r^\sigma \log(1/R) = R^{\tau} \log(1/R),
\end{split}\end{equation}
where we used \eqref{eq:upper-bound-1-5} and the definiiton of $R$. It thus suffices to restrict attention to the set of points $x,y \in E_0$ with $|x-y| \ge r$.

\smallskip

Fix $x \in E_0$ and define for each $y \in E_0$ with $|x-y|\ge r$ an integer $m(y)$ so that $|x-y| \simeq 2^{-m(y)}$. Since $|x-y|\ge r$ we are assured that $m(y)$ appears as a term in the summand for $f_{\xi}$. Set $E_m = \{y \in E_0:m(y) = m\}$. Given $m$, fix a cube $Q_0^m \ni x$ with $Q_0^m \in \cD_{m+1}$; then $y \in E_m$ $\Rightarrow$ $y \notin \tfrac54 Q_0^m$ and $y \in 8 Q_0^m$. In particular, $E_ m \subset 8Q_0^m$. Moreover,
\begin{equation}\label{eq:a-lower-bound}
||a(x,y)||_\infty \ge |a_{Q_0^m}(x,y)| = c_{Q_0^m} = r^\gamma \mu(8Q_0^m)^{1/n} \qquad \forall \, y \in E_m.
\end{equation}
We will show that
\begin{equation*}\begin{split}
&\int_{\{|x-y|\ge r\}} \phi_{R,\theta}^{\tau,n}(||a(x,y)||_\infty) \log_+(||a(x,y)||_\infty/R^\theta) \, d\mu y \\
&\qquad \le C(n) R^\tau \log^2(1/R) = C(n) r^\sigma \log^2(1/r)
\end{split}\end{equation*}
for all $x \in E$. We express the left hand side as the sum of integrals over the sets $E_m$ and proceed to estimate the $m$th term in the sum:
$$
\int_{E_m} \phi_{R,\theta}^{\tau,n}(||a(x,y)||_\infty) \log_+(||a(x,y)||_\infty/R^\theta) \, d\mu y.
$$
To do this, we bound the kernel from above as follows:
$$
\phi_{R,\theta}^{\tau,n}(u) \le R^{(1-\theta) \tau} \left( \frac{R^\theta}{u} \right)^n.
$$
This gives
\begin{equation*}\begin{split}
&\int_{E_m} \phi_{R,\theta}^{\tau,n}(||a(x,y)||_\infty) \log_+(||a(x,y)||_\infty/R^\theta) \, d\mu y \\
& \quad \le \int_{E_m} R^{(1-\theta)\tau} \left( \frac{R^{\theta}}{||a(x,y)||_\infty} \right)^n  \log_+\left( \frac{||a(x,y)||_\infty}{R^\theta} \right) \, d\mu y.
\end{split}\end{equation*}
Using monotonicity of $v \mapsto v^{-n} \log_+(v)$ and the estimate in \eqref{eq:a-lower-bound} we proceed:
\begin{equation*}\begin{split}
&\int_{E_m} \phi_{R,\theta}^{\tau,n}(||a(x,y)||_\infty) \log_+(||a(x,y)||_\infty/R^\theta) \, d\mu y \\
& \quad \le R^{(1-\theta)\tau} \left( \frac{R^{\theta}}{r^\gamma \mu(8Q_0^m)^{1/n}} \right)^n  \log_+\left( \frac{r^\gamma \mu(8Q_0^m)^{1/n}}{R^\theta} \right) \, \mu(8Q_0^m) \\
& \quad \lesssim r^{(1-\theta)\sigma + n\theta\sigma/\tau - \gamma n} \log_+(r^{\gamma-\theta \sigma/\tau} \mu(8Q_0^m)^{1/n}) \\
& \quad \lesssim r^{(1-\theta)\sigma + n\theta\sigma/\tau - \gamma n} \log_+(2^{-m} r^{\gamma-\theta \sigma/\tau - \theta + \theta \sigma/n}) \lesssim r^\sigma \log(\tfrac{2^{-m}}{r^\theta})
\end{split}\end{equation*}
where we used \eqref{eq:upper-bound3} and the definition of $\gamma = \gamma_p(\sigma,\theta)$, see \eqref{eq:gamma-p-sigma-theta}. 

Summing over relevant values of $m$ yields
\begin{equation}\begin{split}\label{eq:bound2}
&\int_{\{|x-y|\ge r\}} \phi_{R,\theta}^{\tau,n}(||a(x,y)||_\infty) \log_+(||a(x,y)||_\infty/R) \, d\mu y \\
&\qquad \lesssim r^\sigma \log^2(1/r) \lesssim R^\tau \log^2(1/R),
\end{split}\end{equation}
where we used the fact that $\sum_{m:2^{-m} \gtrsim r} \bigl( m + \log(1/r) \bigr) \le \log^2(1/r)$. 

Lemma \ref{lem:technical-lemma1} now follows by combining  \eqref{eq:bound0}, \eqref{eq:technical-estimate}, \eqref{eq:bound1}, and \eqref{eq:bound2}. In particular, \eqref{eq:capacity-lower-bound-for-E-0-at-scale-r} holds with constant $\kappa$ depending only on $r$. Recall that $E_0$ was chosen to be of full measure with respect to the capacitary measure $\mu$, and that this measure depended on the scale $r$. However, by monotonicity of capacity we conclude that \eqref{eq:capacity-lower-bound-for-E-0-at-scale-r} holds with the same constant $\kappa$ but with $E_0$ replaced by the original set $E$.

\smallskip

To complete the proof we leverage the preceding argument by aggregating over a countable dense set of intermediate dimension parameters $\theta_j$ and the relevant scales $r_{jk}>0$ associated to a given lower bound for ${\overline\dim}_{\theta_j} E$. Let $\{\theta_j:j \in \N\}$ be a countable dense set of values in $(0,1]$. For each $j$, let $\{\sigma_{j,\ell}\}$ be a sequence with $\sigma_{j,\ell} < s_j:= {\overline\dim}_{\theta_j} E$ and $\sigma_{j,\ell} \nearrow s_j$ as $ \ell \to \infty$. Furthermore, let $\{r_{jk}:k \in \N\}$ be a sequence of scales so that
$$
C^{\sigma_{j,\ell},n}_{r_{jk},\theta_j}(E) \ge r_{jk}^{-\sigma_{j,\ell}}.
$$
For each choice of $j$, $k$ and $\ell$ we carry out the above argument, generating a random family of functions $f_{\xi_{jk\ell},j,k,\ell}$ so that the inequality
$$
C_{R_{jk\ell},\theta_j}^{\tau_{j,\ell},n}(f_{\xi_{jk\ell},j,k,\ell}(E)) \ge \eps \kappa^{-1} R_{jk\ell}^{-\tau_{j,\ell}} \log^{-2}(\tfrac1{R_{jk\ell}})
$$
holds for any $\eps>0$ with probability at least $1 - c(n) \eps \log^{-2}(1/R_{jk\ell})$ in $\xi$, for some $c(n)<1$. Here we set $\tau_{j,\ell} = \tau_p(\sigma_{j,\ell})$ and $R_{jk\ell} = r_{jk}^{\sigma_{j,\ell}/\tau_{j,\ell}}$.

We sum $f_{\xi_{jk\ell},j,k,\ell}$ with appropriate normalization coefficients as in \eqref{eq:f-xi-old}. More precisely, let
\begin{equation}\label{eq:f-xi-new}
f_\xi := \sum_{j=1}^\infty \frac1{j^2} \sum_{k=1}^\infty \frac1{k^2} \sum_{\ell=1}^\infty \frac1{\ell^2} \frac1{\log(1/r_{jk})} f_{\xi_{jk\ell},j,k,\ell}.
\end{equation}
We assume as before that $r_{jk} < e^{-k^2}$. Using independence of the full suite of random variables $\xi := (\xi_{jk\ell})$, we argue that the function $f_\xi$ so defined satisfies 
$$
C_{R_{jk}\ell,\theta_j}^{\tau_{j,\ell},n}(f_{\xi}(E)) \gtrsim \eps R_{jk\ell}^{-\tau_{j,\ell}} \log^{-2}(\tfrac1{R_{jk\ell}})
$$
for all $j$, $k$ and $\ell$, with probability $\ge 1 - c(n) \eps$. It follows that 
$$
{\overline\dim}_{\theta_j} f_\xi(E)  = {\overline\dim}_{\theta_j}^n(f_\xi(E)) \ge \tau_p(\sigma_{j,\ell})
$$ 
for all $j$ and $\ell$, with probability $\ge 1 - c(n) \eps$. Finally, we let $\sigma_{j,\ell} \nearrow s_j = {\overline\dim}_{\theta_j} E$ and appeal to the previous conclusion with $\eps_\ell = 2^{-\ell-1}$. Since $c(n)<1$ we conclude that with positive probability there exists $\xi$ so that 
$$
{\overline\dim}_{\theta_j} f_\xi(E) \ge \tau_p(s_j) \qquad \forall \, j \in \N.
$$
Continuity of the function $\theta \mapsto {\overline\dim}_\theta E$ implies that this conclusion upgrades to
$$
{\overline\dim}_\theta f_\xi(E) \ge \tau_p({\overline\dim}_\theta E) \qquad \forall \, 0<\theta \le 1,
$$
and the proof is complete.
\end{proof}

\section{Open questions and further comments}\label{sec:open-questions}

In this final section, we collect several open questions motivated by the results of this paper.

\smallskip

\begin{ques}\label{q:continuity-of-conformal-Assouad-spectrum}
For which metric spaces $(X,d)$ does it hold true that
$$
\lim_{\theta \to 1} C\dim_A^\theta X = C\dim_{qA} X?
$$
\end{ques}

Recall that $C\dim_A^\theta X$ denotes the infimum of the values of $\dim_A^\theta Y$ over all metric spaces $Y$ which are quasisymmetrically equivalent to $X$, while $C\dim_{qA} X$ denotes the infimum of the values of $\dim_{qA} Y = \lim_{\theta \to 1} \dim_A^\theta Y$ over the same class of spaces $Y$. Justifying the interchange of infimum and limit in this setting is a subtle question. For instance, even if we know that $C\dim_A^\theta X = 0$ for all $0<\theta<1$ (see e.g.\ the conclusion in Theorem \ref{th:applic-1}) it is unclear whether we can deduce that $C\dim_{qA} X = 0$. While quasisymmetric mappings have strong convergence properties, such convergence theorems necessarily require a uniform bound on the quasisymmetric distortion function and in practice, there is no a priori reason why a sequence of quasisymmetric mappings $f_\theta:X \to Y_\theta$, $0<\theta<1$, for which $\dim_A^\theta(f_\theta(X)) < \eps$ for some fixed $\eps$ (independent of $\theta$) should satisfy such a uniform bound.

\smallskip

As mentioned in Example \ref{ex:binder-hakobyan-li}, full Bedford--McMullen carpets with uniform fibres are minimal for conformal Hausdorff dimension, see \cite{BHL}.

\begin{ques}\label{q:bhl-consequence}
Is every full Bedford--McMullen carpet minimal for conformal Hausdorff dimension?
\end{ques}

The next two questions relate to Example \ref{ex:percolation-example}. Recall that the Hausdorff and box-counting dimensions of samples of Mandelbrot percolation agree a.s., and such samples are not minimal for conformal Hausdorff dimension a.s.

\begin{ques}\label{q:percolation-1}
Does the strict inequality $C\overline\dim_B F < \overline\dim_B F$ hold a.s.\ for samples $F$ of the Mandelbrot percolation process? In particular, is there a constant a.s.\ value for $C\overline\dim_B F$?
\end{ques}

\begin{ques}\label{q:percolation-2}
Does the equality $C\overline\dim_B F = C\dim_H F$ hold a.s.\ for samples $F$ of Mandelbrot percolation?
\end{ques}

Similar questions can be posed about other notions of dimension (e.g.\ intermediate dimension, Assouad spectrum). As observed in \cite{brt:phi-assouad}, the dimensional size of Mandelbrot percolation samples is naturally quantified using the more general notion of {\it $\phi$-Assouad dimension} $\dim_A^\phi$, and it may be of interest to explore the mapping-theoretic properties of $\phi$-Assouad dimension.

\end{document}